\documentclass[12pt]{amsart}

\usepackage{etex, stmaryrd, epsfig, graphics, psfrag, latexsym,
  mathtools, mathrsfs,enumitem, longtable,booktabs,ifthen}
\usepackage{amsfonts, amsmath, amsthm, amssymb}
\usepackage[noadjust]{cite} \usepackage{xcolor}
\usepackage[all,cmtip]{xy}
\usepackage[lmargin=1in,rmargin=1in,tmargin=1in,bmargin=1in]{geometry}
\usepackage{tikz} \usetikzlibrary{matrix,arrows,shapes,calc}
\definecolor{darkblue}{rgb}{0,0,0.4} 
\usepackage{xr-hyper}
\usepackage[colorlinks=true, citecolor=darkblue, filecolor=darkblue, linkcolor=darkblue,urlcolor=darkblue]{hyperref} 
\usepackage[all]{hypcap}
\newcommand{\Sadey}{\begin{tikzpicture}[baseline=-\the\dimexpr\fontdimen22\textfont2\relax] \draw (0,0) circle (1ex); \node at (50:0.4ex) {.}; \node at (130:0.4ex) {.}; \draw (-35:0.75ex) arc (35:145:0.75ex);\end{tikzpicture}}
\newcommand{\Quesy}{\begin{tikzpicture}[baseline=-\the\dimexpr\fontdimen22\textfont2\relax] \draw (0,0) circle (1ex); 
\node at (0ex,-0.52ex) {.};
\draw[line width=0.7pt] plot[smooth] coordinates {(0ex,-0.2ex) (0ex,0ex) (0.2ex,0.25ex) (0.2ex,0.5ex) (0ex,0.6ex) (-0.25ex,0.55ex) (-0.3ex,0.4ex)};
\fill[black] (-0.3ex,0.4ex) circle (0.35pt);
\fill[black] (0ex,-0.2ex) circle (0.35pt);
\end{tikzpicture}}

\tikzset{f1a/.style={->, double}}
\tikzset{f2a/.style={->, dashed, double}}
\tikzset{Xa/.style={->, thick,densely dotted}}
\tikzset{Wa/.style={->, thick,dashed}}
\tikzset{diffa/.style={->, thick}}


\graphicspath{{draws/}{}}

\numberwithin{equation}{section}

\newtheorem{introthm}{Theorem}

\newtheorem{lem}{Lemma}[section]               
\newtheorem{lemma}[lem]{Lemma}               
\newtheorem{theorem}[lem]{Theorem}

\newtheorem{corollary}[lem]{Corollary}               

\newtheorem{proposition}[lem]{Proposition}
\newtheorem{citethm}[lem]{Theorem}
\theoremstyle{definition}

\newtheorem{definition}[lem]{Definition}

\theoremstyle{remark}

\newtheorem{remark}[lem]{Remark}

\newtheorem{example}[lem]{Example}

\newtheorem{convention}[lem]{Convention}

\numberwithin{figure}{section}
\numberwithin{table}{section}



\newcommand{\R}{\mathbb{R}}

\newcommand{\F}{\mathbb{F}}

\newcommand{\N}{\mathbb{N}}

\newcommand{\mc}{\mathcal}

\newcommand{\wt}{\widetilde}

\renewcommand{\emptyset}{\varnothing}

\newcommand{\alphas}{\boldsymbol{\alpha}}
\newcommand{\betas}{\boldsymbol{\beta}}

\newcommand{\from}{\nobreak\mskip2mu\mathpunct{}\nonscript
  \mkern-\thinmuskip{:}\penalty300\mskip6muplus1mu\relax}
\newcommand{\into}{\hookrightarrow}

\renewcommand{\th}{^{\text{th}}}

\newcommand{\SpinC}{\text{Spin}^{\text{c}}}

\newcommand{\HD}{\mathcal{H}}

\newcommand{\CF}{\mathit{CF}}
\newcommand{\HF}{\mathit{HF}}

\DeclareMathOperator{\Sym}{Sym}

\DeclareMathOperator{\Id}{Id}

\DeclareMathOperator{\Hom}{Hom}

\DeclareMathOperator{\Tor}{Tor}
\DeclareMathOperator{\Ext}{Ext}
\DeclareMathOperator{\RHom}{RHom}

\newcommand{\Kh}{\mathit{Kh}}
\newcommand{\rKh}{\widetilde{\Kh}}
\newcommand{\KhCx}{\mc{C}_{\mathit{Kh}}}
\newcommand{\rKhCx}{\widetilde{\mathcal{C}}_{\mathit{Kh}}}



\newcommand{\Filt}{\mathcal{F}}




\newcommand{\co}{\from}
\newcommand{\bdy}{\partial}
\newcommand{\RR}{\R}
\newcommand{\FF}{\F}

\newcommand{\pt}{\mathrm{pt}}

\newcommand{\NN}{\N}
\newcommand{\ZZ}{\mathbb{Z}}

\DeclareMathOperator{\image}{im}

\newcommand{\mathcenter}[1]{\vcenter{\hbox{$#1$}}}












\newcommand{\Ainf}{A_\infty}

\newcommand{\cM}{\mathcal{M}}

\tikzstyle{crossing}=[circle,fill=white,minimum height=6pt,inner sep=0pt, outer sep=0pt, style={transform shape=false}]







\definecolor{darkgreen}{rgb}{0,.3,0}
\definecolor{darkred}{rgb}{0.3,0,0}

\newcommand{\unroll}[1]{#1^{\mathrm{un}}}
\newcommand{\HFa}{\widehat{\HF}}
\newcommand{\CFa}{\widehat{\CF}}
\newcommand{\tHFa}{\underline{\widehat{\HF}}}
\newcommand{\tCFa}{\underline{\widehat{\CF}}}
\newcommand{\tbdy}{\underline{\bdy}}
\newcommand{\lra}{\longrightarrow}
\newcommand{\lla}{\longleftarrow}
\newcommand{\tors}{\mathit{tors}}
\newcommand{\othR}{S}

\begin{document}
\title{Khovanov homology detects split links}

\author{Robert Lipshitz}
\address{Department of Mathematics, University of Oregon\\
  Eugene, OR 97403}
\thanks{\texttt{RL was supported by NSF Grant DMS-1810893.}}
\email{\href{mailto:lipshitz@uoregon.edu}{lipshitz@uoregon.edu}}

\author{Sucharit Sarkar}
\address{Department of Mathematics, University of California, Los Angeles, CA 90095}
\thanks{\texttt{SS was supported by NSF Grant DMS-1905717.}}
\email{\href{mailto:sucharit@math.ucla.edu}{sucharit@math.ucla.edu}}

\date{\today}

\begin{abstract}
  Extending ideas of Hedden-Ni, we show that the module structure on
  Khovanov homology detects split links. We also prove an analogue for
  untwisted Heegaard Floer homology of the branched double
  cover. Technical results proved along the way include two
  interpretations of the module structure on untwisted Heegaard Floer
  homology in terms of twisted Heegaard Floer homology and the
  fact that the module structure on the reduced Khovanov complex of a
  link is well-defined up to quasi-isomorphism.
\end{abstract}

\maketitle

\tableofcontents

\section{Introduction}

Since the Jones polynomial and Khovanov homology are somewhat
mysterious invariants, there has been substantial interest in
understanding their geometric content. Much progress along these lines
has been finding detection results. Grigsby-Wehrli showed that the Khovanov
homology of nontrivial cables detects the
unknot~\cite{GW-kh-detects}. (See also~\cite{Hedden-Kh-2-cable}.)
Kronheimer-Mrowka showed that Khovanov homology itself detects the
unknot~\cite{KM-kh-detects}. So, by work of Hedden-Ni, Khovanov homology
also detects the 2-component unlink~\cite{HN-hf-small}. Hedden-Ni went
on to show that the module structure on Khovanov homology detects the
$n$-component unlink~\cite{HN-kh-detects}. Batson-Seed refined this to
show that Khovanov homology as a bi-graded abelian group detects the
unlink~\cite{BS-kh-splitting}. Recently, Baldwin-Sivek showed that
Khovanov homology detects the trefoils~\cite{BS-Kh-det-tref} and
Baldwin-Sivek-Xie showed that Khovanov homology detects the Hopf
links~\cite{BSX-kh-det-Hopf}. Even more recently, Xie-Zhang classified $n$-component links with Khovanov homology of dimension $2^n$~\cite{XZ-Kh-min-links}.

The detection problem for Heegaard Floer homology has also received
considerable attention. Ozsv\'ath-Szab\'o showed that knot Floer homology detects the genus
(and Heegaard Floer homology detects the Thurston
norm)~\cite{OSz-hf-genusbounds}, and hence the unknot. Ghiggini showed that knot
Floer homology detects the trefoils and figure 8 knot~\cite{Ghi-hf-fibred}. Ni
showed that knot Floer homology detects fibered knots in general and Heegaard
Floer homology detects 3-manifolds that fiber over the circle with fiber of
genus $>1$~\cite{Ni-hf-fibred,Ni-hf-fibred-closed}. Ai-Peters and Ai-Ni showed
that twisted Heegaard Floer homology detects fibered 3-manifolds with genus $1$
fibers~\cite{AP-hf-fibred,AN-hf-fibred}. Ni showed that Heegaard Floer homology
detects the Borromean knots~\cite{Ni-hf-homological}, and Hedden-Ni classified manifolds with small
Heegaard Floer ranks~\cite{HN-hf-small}. Building slightly on these results, Alishahi-Lipshitz
showed that bordered Heegaard Floer homology detects homologically essential
compressing disks, bordered-sutured Heegaard Floer homology detects
boundary-parallel tangles, and twisted Heegaard Floer homology detects homologically essential
2-spheres~\cite{AL-hf-incomp}. (This last detection theorem will be
used below.)

Indeed, all of the detection results for Khovanov homology come from
comparing Khovanov homology to some gauge-theoretic invariant, like
Heegaard Floer homology. This paper will be no exception. Extending
ideas of Hedden-Ni's, we will use the fact that the branched double
cover of a link $L$ is irreducible if and only if $L$ is prime and non-split
to show:
\begin{introthm}\label{thm:main-v1}
  Let $L$ be a 2-component link in $S^3$. Fix basepoints $p,q$ on the
  two components of $L$. Let $\rKh(L;\FF_2)$ be the reduced Khovanov
  homology of $L$ with respect to the basepoint $p$, viewed as an
  $\FF_2[X]/(X^2)$-module with respect to the basepoint $q$. Then,
  $\rKh(L;\FF_2)$ is a free module if and only if $L$ is a split link.

  More generally, for a link $L$ with $k$ components and basepoints
  $p,q$ on $L$, there is a $2$-sphere in $S^3\setminus L$ separating
  $p$ from $q$ if and only if $\rKh(L;\FF_2)$ is a free module over
  $\FF_2[X]/(X^2)$.
\end{introthm}

We give a refined version of Theorem~\ref{thm:main-v1}, and a version for
unreduced Khovanov homology, below, after recalling some algebra.

\begin{definition}\label{def:quasi-free}
  Let $C$ be a bounded chain complex over a ring $R$ or, more
  generally, an $\Ainf$-module over $R$. We say that $C$ is
  \emph{quasi-free} if $C$ is ($A_\infty$) quasi-isomorphic to a
  bounded chain complex of free $R$-modules.
\end{definition}

\begin{definition}\label{def:unrolled}
  Let $\FF_2[Y^{-1},Y]]$ denote the ring of Laurent series.
  Let $(C,\bdy_C)$ be a differential $\FF_2[X]/(X^2)$-module (e.g., a chain
  complex over $\FF_2[X]/(X^2)$). By the \emph{unrolling} of $C$ we mean the
  differential $\FF_2[Y^{-1},Y]]$-module
  $\unroll{C}=C\otimes_{\FF_2}\FF_2[Y^{-1},Y]]$ with differential
  \[
    \bdy(z\otimes Y^n)=\bdy_C(z)\otimes Y^{n}+zX\otimes Y^{n+1}.
  \]

  This is a completion of the total complex of the bicomplex
  \[
   \cdots\stackrel{X}{\lra} C\stackrel{X}{\lra} C\stackrel{X}{\lra}C\stackrel{X}{\lra}\cdots.
 \]

 More generally, if $C$ is a strictly unital $\Ainf$-module over $\FF_2[X]/(X^2)$ then
 the unrolled complex of $C$ is $C\otimes_{\FF_2} \FF_2[Y^{-1},Y]]$ with differential 
  \[
    \bdy(z\otimes Y^n)=
    \sum_{m\geq 0} \mu_{1+m}(z,\overbrace{X,\cdots,X}^m)\otimes Y^{n+m}.
  \]
  This is an honest differential module over $\FF_2[Y^{-1},Y]]$. (The
  notion of strict unitality is recalled in Definition~\ref{def:su}.)
  
 We will refer to the homology of $\unroll{C}$, $H_*(\unroll{C})$, as the
 \emph{unrolled homology} of $C$.
\end{definition}

\begin{introthm}\label{thm:2-comp-precise}
  Let $L$ be a link in $S^3$ and $p,q\in L$.
  Let $\rKhCx(L;\FF_2)$ be the reduced
  Khovanov complex with respect to $p$, which is a module over
  $\FF_2[X]/(X^2)$ via the basepoint $q$. Then, the following are
  equivalent:
  \begin{enumerate}[label=(\arabic*),leftmargin=*]
  \item\label{item:L-split} There is a $2$-sphere in $S^3\setminus L$ separating $p$ from
    $q$.
  \item\label{item:Kh-free} $\rKh(L;\FF_2)$ is a free module.
  \item\label{item:2-quasi-free} $\rKhCx(L;\FF_2)$ is quasi-free.
  \item\label{item:unroll-acyclic} $\unroll{\rKhCx(L;\FF_2)}$ is acyclic.
  \end{enumerate}
\end{introthm}

\begin{corollary}\label{cor:unreduced}
  Let $L$ be a link in $S^3$ and $p,q$ points in $L$. There is a
  $2$-sphere in $S^3\setminus L$ separating $p$ from $q$ if and only
  if $\Kh(L;\FF_2)$ is a free module over $\FF_2[W,X]/(W^2,X^2)$ where
  the action of $W$ corresponds to $p$ and the action of $X$
  corresponds to $q$.
\end{corollary}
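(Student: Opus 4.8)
The plan is to derive Corollary~\ref{cor:unreduced} from Theorem~\ref{thm:2-comp-precise} using two inputs: a purely algebraic characterization of freeness over $R=\FF_2[W,X]/(W^2,X^2)$, and the standard comparison between unreduced and reduced Khovanov homology over $\FF_2$.

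\emph{Step 1 (algebra).} I would first prove: a finitely generated bigraded $R$-module $M$ is free if and only if $M$ is free over $\FF_2[W]/(W^2)$ and $M/WM$ is free over $\FF_2[X]/(X^2)$. The forward implication is formal, since $R$ is free of rank $2$ over $\FF_2[W]/(W^2)$ (so restricting a free $R$-module stays free) and $M/WM=M\otimes_R R/(W)$ is a base change of a free module along the quotient $R\to R/(W)=\FF_2[X]/(X^2)$. For the converse, freeness of $M$ over $\FF_2[W]/(W^2)$ gives $\ker(W\colon M\to M)=WM$ and hence an $\FF_2[X]/(X^2)$-linear isomorphism $WM\cong M/WM$; since $M/WM$ is free, hence projective, over $\FF_2[X]/(X^2)$, the short exact sequence $0\to WM\to M\to M/WM\to 0$ of $\FF_2[X]/(X^2)$-modules splits, and a homogeneous splitting $s\colon M/WM\to M$ yields an $R$-module isomorphism $M\cong (M/WM)\otimes_{\FF_2}\FF_2[W]/(W^2)$ via $\bar m\otimes 1\mapsto s(\bar m)$, $\bar m\otimes W\mapsto Ws(\bar m)$. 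The right-hand side is free over $R=(\FF_2[X]/(X^2))\otimes_{\FF_2}(\FF_2[W]/(W^2))$ because $M/WM$ is free over $\FF_2[X]/(X^2)$. All choices can be taken homogeneous, since graded-projective equals graded-free for finitely generated modules over a graded-local ring.

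\emph{Step 2 (topology, via Shumakovitch).} The second input is Shumakovitch's theorem that over $\FF_2$ the unreduced Khovanov homology is free over the basepoint algebra of $p$: $\Kh(L;\FF_2)$ is a free $\FF_2[W]/(W^2)$-module and $\Kh(L;\FF_2)/W\Kh(L;\FF_2)\cong\rKh(L;\FF_2)$. The point to be careful about is that this last isomorphism is compatible with the basepoint-$q$ action, i.e.\ is an isomorphism of $\FF_2[X]/(X^2)$-modules onto $\rKh(L;\FF_2)$ with the module structure appearing in Theorem~\ref{thm:2-comp-precise}. This holds because the isomorphism is induced by the maps in the short exact sequence $0\to\rKhCx(L;\FF_2)\to\KhCx(L;\FF_2)\to\rKhCx(L;\FF_2)\to 0$ coming from multiplication by $X_p$ on $\KhCx(L;\FF_2)$ (these are all morphisms of complexes of $R$-modules, so the connecting and projection maps of the associated long exact sequence are $\FF_2[X]/(X^2)$-linear), together with the fact --- Shumakovitch's theorem again --- that freeness of $\Kh(L;\FF_2)$ over $\FF_2[W]/(W^2)$ identifies $W\Kh(L;\FF_2)$ with the image in $\Kh(L;\FF_2)$ of the homology of the sub-complex.

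\emph{Step 3 (assembly).} Combining the two steps: $\Kh(L;\FF_2)$ is free over $\FF_2[W,X]/(W^2,X^2)$ iff $\Kh(L;\FF_2)$ is free over $\FF_2[W]/(W^2)$ --- which always holds --- and $\Kh(L;\FF_2)/W\Kh(L;\FF_2)\cong\rKh(L;\FF_2)$ is free over $\FF_2[X]/(X^2)$; by Theorem~\ref{thm:2-comp-precise} the latter is equivalent to the existence of a $2$-sphere in $S^3\setminus L$ separating $p$ from $q$. I expect Step~2 to be the main obstacle: since taking homology does not commute with quotienting by $X_p$, one genuinely needs Shumakovitch's theorem (equivalently, the vanishing on $\rKh(L;\FF_2)$ of the Bockstein-type map in the long exact sequence of the $X_p$-multiplication sequence) to get the clean identification of $\Kh(L;\FF_2)/W\Kh(L;\FF_2)$ with $\rKh(L;\FF_2)$ as a module over the second basepoint algebra; the rest is bookkeeping. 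Step~1 also explains why freeness over $\FF_2[W]/(W^2)$ and over $\FF_2[X]/(X^2)$ separately does not suffice: a resolution circle carrying both $p$ and $q$ contributes the module $R/(W+X)$, which is free over each factor but not over $R$.
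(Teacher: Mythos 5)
Your argument is correct, and the two points you flag as delicate (the algebraic characterization of freeness over $\FF_2[W,X]/(W^2,X^2)$ in Step 1, and the $\FF_2[X]/(X^2)$-equivariance plus vanishing of the connecting map in Step 2) are exactly the right things to check; but your route differs from the paper's in both directions. For (sphere $\Rightarrow$ free), the paper passes to a split diagram via Theorem~\ref{thm:Kh-mod-well-defd} and uses the K\"unneth decomposition $\Kh(L)\cong\Kh(L_p)\otimes_{\FF_2}\Kh(L_q)$, with freeness of each factor over its basepoint algebra coming from the odd Khovanov splitting as in Lemma~\ref{lem:split-is-free}; you instead deduce it from Theorem~\ref{thm:main-v1} together with your Step 1. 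For the converse, the paper never invokes Shumakovitch: it uses only that freeness over $\FF_2[W,X]/(W^2,X^2)$ implies freeness over $\FF_2[W]/(W^2)$, and then identifies $\rKh(L;\FF_2)\cong\Kh(L;\FF_2)/W\Kh(L;\FF_2)$ as $\FF_2[X]/(X^2)$-modules via Lemma~\ref{lem:reduce-Kh} (the reduced complex as an $\Ainf$ tensor product over $\FF_2[W]/(W^2)$) and the collapse of the associated universal-coefficient spectral sequence --- this chain-level $\Ainf$ detour is precisely what replaces your appeal to the vanishing of the Bockstein-type connecting map. Your version makes that identification unconditionally at the level of the long exact sequence of $W$-multiplication, which does require Shumakovitch's theorem; note, however, that this input is already available inside the paper, since the proof of Lemma~\ref{lem:split-is-free}, via \cite[Proposition 1.8]{OSzR-kh-oddkhovanov}, shows for an arbitrary pointed link that the reduced-to-unreduced long exact sequence splits and hence that $\Kh(L;\FF_2)$ is free over the basepoint algebra, so your proof can be run with no citation beyond what the paper uses. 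The trade-off: your route avoids the $\Ainf$ tensor product and spectral-sequence bookkeeping entirely, at the cost of importing the unconditional freeness statement; the paper's argument needs only the freeness hypothesis itself and stays within the machinery it has already set up.
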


\begin{remark}\label{rem:easy-parts}
  In Theorem~\ref{thm:2-comp-precise}, the implication~\ref{item:L-split}$\implies$\ref{item:Kh-free}
  is a result of Shumakovitch~\cite[Corollary 3.2.B]{Shu-kh-torsion};
  see Lemma~\ref{lem:split-is-free}.
  (This also follows from an argument in odd Khovanov
  homology~\cite[Proposition 1.8]{OSzR-kh-oddkhovanov}.)
  The implication~\ref{item:L-split}$\implies$\ref{item:2-quasi-free}
  is obvious, modulo knowing that the basepoint action is
  well-defined, up to quasi-isomorphism, on the reduced Khovanov
  complex. The
  implication~\ref{item:Kh-free}$\implies$\ref{item:unroll-acyclic}
  follows from an easy spectral sequence argument.  The
  implication~\ref{item:2-quasi-free}$\implies$\ref{item:unroll-acyclic}
  is Lemma~\ref{lem:qf-unroll-acyclic}, which again follows from an
  easy spectral sequence argument. Most of the work is in proving the
  implication~\ref{item:unroll-acyclic}$\implies$\ref{item:L-split},
  which uses the Ozsv\'ath-Szab\'o spectral
  sequence~\cite{OSz-hf-dcov}, a nontriviality result for twisted
  Heegaard Floer homology of Ozsv\'ath-Szab\'o and Hedden-Ni, and a
  computation of the $\Ainf$ module structure on $\HFa(Y)$ in terms of
  the twisted Floer homology. In particular, the restriction to
  characteristic $2$ is because of the corresponding restriction for
  the Ozsv\'ath-Szab\'o spectral sequence.
\end{remark}

As in Hedden-Ni's work, the key to proving
Theorem~\ref{thm:2-comp-precise} is tracking the module structure
through the Ozsv\'ath-Szab\'o spectral sequence
$\rKh(m(L))\Rightarrow \HFa(\Sigma(L))$. The Heegaard Floer homology
$\HFa(Y)$ is a module over the exterior algebra
$\Lambda^*(H_1(Y)/\tors)$~\cite{OSz-hf-3manifolds}. In the case
$Y=\Sigma(L)$, the pair of points $p,q\in L$ specifies an element
$X\in H_1(\Sigma(L))$ so, by restriction of scalars, $\HFa(\Sigma(L))$
is a module over $\FF_2[X]/(X^2)$. 

Proving Theorem~\ref{thm:2-comp-precise} requires working at
the chain level. As Hedden-Ni note, at the chain level, the action of
$X$ on $\CFa(\Sigma(L))$ is only associative up to homotopy. In fact,
$\CFa(\Sigma(L))$ is naturally an $\Ainf$-module over
$\FF_2[X]/(X^2)$; see Section~\ref{sec:HF-module}. By homological
perturbation theory, $\HFa(\Sigma(L))$ inherits the structure of an
$\Ainf$-module. Similarly, the action of $\FF_2[X]/(X^2)$ on $\rKhCx(L)$
induces an $\Ainf$-module structure on $\rKh(L)$.

We have the following Heegaard Floer analogue of
Theorem~\ref{thm:2-comp-precise}:
\begin{introthm}\label{thm:HF-detect-split}
  Let $L\subset S^3$ be a link and $p,q\in L$.  Consider the induced
  $\Ainf$-module structures on $\CFa(\Sigma(L);\FF_2)$ and
  $\HFa(\Sigma(L);\FF_2)$ over $\FF_2[X]/(X^2)$. We can also view
  $\HFa(\Sigma(L);\FF_2)$ as an ordinary module over $\FF_2[X]/(X^2)$, by
  forgetting the higher $\Ainf$ operations. Then, the following are
  equivalent:
  \begin{enumerate}[label=(\arabic*), ref=\arabic*,leftmargin=*]
  \item\label{item:HF-L-split} There is a $2$-sphere in
    $S^3\setminus L$ separating $p$ and $q$.
  \item\label{item:HF-free} $\HFa(\Sigma(L);\FF_2)$, viewed as an
    ordinary module over $\FF_2[X]/(X^2)$, is a free module.
  \item\label{item:HF-2-quasi-free} the $\Ainf$-module
    $\CFa(\Sigma(L);\FF_2)$ is quasi-free.
  \item\label{item:HF-unroll-cx-acyclic}
    $\unroll{\CFa(\Sigma(L);\FF_2)}$ is acyclic.
  \end{enumerate}
\end{introthm}
Note that, for Heegaard Floer homology with appropriate twisted
coefficients, some of these equivalences were essentially proved by
Hedden-Ni~\cite[Corollary 5.2]{HN-kh-detects}.

\begin{remark}
  This project stems from thinking about Eisermann's
  result~\cite{Eis-knot-Jones-ribbon} that the reduced Jones
  polynomial of a $2$-component ribbon link is divisible by
  $(q+q^{-1})$. Among his prescient comments, Eisermann~\cite[Section
  7.3]{Eis-knot-Jones-ribbon} notes that it is not true that the
  reduced Khovanov homology of such a ribbon link is divisible by
  the Khovanov homology of the unknot. We thought that perhaps,
  instead, the reduced Khovanov complex of a ribbon
  link might be quasi-free over $\FF_2[X]/(X^2)$, which would recover
  Eisermann's result after
  decategorification. Theorem~\ref{thm:2-comp-precise} shows that
  this is definitely not the case, at least in characteristic $2$. In
  fact, for Eisermann's example $L10^n_{36}$, a 2-component ribbon
  knot, the Khovanov complex is not quasi-free in any characteristic:
  the reduced Khovanov homology of $L10^n_{36}$, as computed by
  sKnotJob~\cite{Sch-kh-sknotjob}, is:
  \begin{center}
  \begin{tabular}{|c||c|c|c|c|c|c|c|c|c|c|c|}
\hline
\!$q$\! $\backslash$ \!$h$\! & $-5$ & $-4$ & $-3$ & $-2$ & $-1$ & $0$ & $1$ & $2$ & $3$ & $4$ & $5$ \\
\hline
\hline
$9$ & & & & & & & & & & & $\ZZ$ \\
\hline
$7$ & & & & & & & & & & $\ZZ$ & \\
\hline
$5$ & & & & & & & & & $\ZZ$ & & \\
\hline
$3$ & & & & & & & $\ZZ$ & $\ZZ^{2}$ & & & \\
\hline
$1$ & & & & & & $\ZZ^{2}$ & $\ZZ$ & & & & \\
\hline
$-1$ & & & & & $\ZZ$ & $\ZZ^{2}$ & & & & & \\
\hline
$-3$ & & & & $\ZZ^{2}$ & $\ZZ$ & & & & & & \\
\hline
$-5$ & & & $\ZZ$ & & & & & & & & \\
\hline
$-7$ & & $\ZZ$ & & & & & & & & & \\
\hline
$-9$ & $\ZZ$ & & & & & & & & & & \\
\hline
\end{tabular}
.
\end{center}
Considering the bi-gradings, this implies that the unrolled homology is nontrivial.
\end{remark}

\begin{remark}
  The restriction to $\FF_2$-coefficients in
  Theorem~\ref{thm:HF-detect-split} is presumably unnecessary. The
  additional work required to generalize
  Theorem~\ref{thm:HF-detect-split} to arbitrary field coefficients is
  adding signs to Section~\ref{sec:twist-to-Ainf}.
\end{remark}

\begin{remark}
  An analogue of Corollary~\ref{cor:unreduced} for link Floer
  homology was recently proved by Wang~\cite{Wang-hf-detects}.
\end{remark}

This paper is organized as follows. In Section~\ref{sec:alg-back} we
collect some algebraic definitions and
results. Section~\ref{sec:HF-module} recalls Heegaard Floer
homology with twisted coefficients and the ($\Ainf$)
$\Lambda^*(H_1(Y)/\tors)$-module structure on Heegaard Floer homology,
and relates them. The relations are in
Section~\ref{sec:twist-to-Ainf}; much of this works more generally for
complexes over $\FF_2[t^{-1},t]$ and $\Ainf$-modules over
$\FF_2[X]/(X^2)$, and may be of independent
interest. Section~\ref{sec:Kh-module} recalls the module
structure on the Khovanov complex and reduced Khovanov complex, and
proves these are invariants up to quasi-isomorphism. Finally,
Section~\ref{sec:detect} combines these ingredients to prove the
detection theorems.

\noindent
\emph{Acknowledgments.} We thank N.~Dunfield, M.~Hutchings, T.~Lidman, Y.~Ni, and
R.~Rouquier for helpful conversations. We also thank the referee for
further helpful corrections and suggestions.

\section{Algebraic background}\label{sec:alg-back}
Throughout this section, for convenience and because it suffices for
our application, we work in characteristic $2$. Many of the results
have easy extensions to arbitrary characteristic.

\subsection{Ungraded chain complexes}
The Heegaard Floer complexes are cyclically graded. Since the
homological algebra of cyclically graded chain complexes behaves
differently in some cases, we note some properties that hold
for ungraded chain complexes and, consequently, for cyclically graded
ones.

\begin{definition}
  Let $R$ be a ring. An \emph{ungraded chain complex} over
  $R$ or \emph{differential $R$-module} is an $R$-module $C$ and a
  homomorphism $\bdy\co C\to C$ with
  $\bdy^2=0$. The \emph{homology} $H(C,\bdy)$ of $(C,\bdy)$ is
  $\ker(\bdy)/\image(\bdy)$.
  
  Given ungraded chain complexes $(C,\bdy_C)$ and $(D,\bdy_D)$ over
  $R$, an $R$-module homomorphism $f\co C\to D$ is a \emph{chain map}
  if $\bdy_D\circ f = f\circ \bdy_C$. A chain map induces a map on
  homology. A chain map is a \emph{quasi-isomorphism} if the induced
  map on homology is an isomorphism.
\end{definition}

We will also be interested in ungraded $\Ainf$-modules:
\begin{definition}
  Let $R$ be an $\FF_2$-algebra. An \emph{ungraded $\Ainf$-module} over $R$ is
  an $\FF_2$-vector space $M$ together with maps
  \[
    \mu_{1+n}\co M\otimes R^{\otimes n}\to M
  \]
  satisfying
  \[
    \sum_{i+j=n}\mu_{1+i}(\mu_{1+j}(z,a_1,\dots,a_j),a_{j+1},\dots,a_n)+\sum_{i=1}^{n-1} \mu_{n}(z,a_1,\dots,a_{i-1},a_ia_{i+1},\dots,a_n)=0
  \]
  for each $n\geq 0$, $z\in M$, and $a_1,\dots,a_n\in R$.

  Given ungraded $\Ainf$-modules $(M,\mu^M)$ and $(N,\mu^N)$ over $R$,
  an \emph{$\Ainf$-module homomorphism} $f\co (M,\mu^M)\to (N,\mu^N)$ is a collection of
  $\FF_2$-vector space homomorphisms
  \[
    f_{1+n}\co M\otimes R^{\otimes n}\to N
  \]
  satisfying
  \begin{align*}
    \sum_{i+j=n}f_{1+i}(\mu^M_{1+j}(z,a_1,\dots,a_j),a_{j+1},\dots,a_n)
    &+\!\sum_{i+j=n}\!\mu^N_{1+i}(f_{1+j}(z,a_1,\dots,a_j),a_{j+1},\dots,a_n)\\
    &+\sum_{i=1}^{n-1} f_{n}(z,a_1,\dots,a_{i-1},a_ia_{i+1},\dots,a_n)=0    
  \end{align*}
  for each $n\geq 0$, $z\in M$, and $a_1,\dots,a_n\in R$.
  An $\Ainf$-module homomorphism $f$ is a \emph{quasi-isomorphism} if
  the map $f_1\co (M,\mu_1^M)\to (N,\mu_1^N)$ is a quasi-isomorphism.

  Given $\Ainf$ homomorphisms $f,g\co (M,\mu^M)\to(N,\mu^N)$, a
  \emph{homotopy} from $f$ to $g$ is a collection of $\FF_2$-vector
  space homomorphisms $k_{1+n}\co M\otimes R^{\otimes n}\to N$ so that
  for all $n$,
  \begin{multline*}
    \sum_{i+j=n}k_{1+i}(\mu^M_{1+j}(z,a_1,\dots,a_j),a_{j+1},\dots,a_n)
    +\!\sum_{i+j=n}\!\mu^N_{1+i}(k_{1+j}(z,a_1,\dots,a_j),a_{j+1},\dots,a_n)\\
    +\sum_{i=1}^{n-1} k_{n}(z,a_1,\dots,a_{i-1},a_ia_{i+1},\dots,a_n)=f_{1+n}+g_{1+n}.
  \end{multline*}

  Given $\Ainf$ homomorphisms $f\co (M,\mu^M)\to (N,\mu^N)$ and $g\co (N,\mu^N)\to (P,\mu^P)$, define
  $(g\circ f)\co M\to P$ by
  \[
    (g\circ f)_{1+n}=
    \sum_{i+j=n}g_{1+i}(f_{1+j}(z,a_1,\dots,a_j),a_{j+1},\dots,a_n).
  \]

  The \emph{identity homomorphism} of $M$ is defined by $\Id_1(x)=x$
  and $\Id_{1+n}=0$ for $n>0$.

  An $\Ainf$ homomorphism $f\co M\to N$ is a \emph{homotopy
    equivalence} if there is an $\Ainf$ homomorphism $g\co N\to M$ so
  that $f\circ g$ and $g\circ f$ are homotopic to the identity maps. 
\end{definition}
(Of course, these definitions generalize to the case that $R$ is an
$\Ainf$-algebra, but we will not need this generalization.)

The universal coefficient theorem holds in the ungraded setting:
\begin{lemma}\label{lem:univ-coeff}
  Let $R$ be a principal ideal domain, $(C,\bdy)$ an ungraded chain complex over $R$, and
  $M$ an $R$-module. Assume that $C$ is a projective $R$-module. Then,
  there is a natural short exact sequence
  \[
    0\to H(C,\bdy)\otimes_R M\to H(C\otimes_R M,\bdy)\to
    \Tor^1_R(H(C,\bdy),M)\to 0
  \]
  which splits (unnaturally).
\end{lemma}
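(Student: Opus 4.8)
The plan is to mimic the classical universal coefficient theorem for complexes of free modules, taking care of two small differences from the usual setting: there is a single projectivity hypothesis (on the one module $C$) rather than one "in each degree", and the homology long exact sequence associated to an ungraded short exact sequence of complexes is $3$-periodic. First I would set $Z=\ker(\partial)$ and $B=\image(\partial)$, so $B\subseteq Z\subseteq C$ and $H\coloneqq H(C,\partial)=Z/B$. Since $C$ is projective over the PID $R$ it is a summand of a free module, and submodules of a free module over a PID are free; hence $Z$ and $B$ are free, in particular flat and projective. Then
\[
0\to Z\xrightarrow{\iota}C\xrightarrow{\partial}B\to 0
\]
is a short exact sequence of ungraded chain complexes, where $Z$ and $B$ carry the zero differential and $\partial$ is read as a chain map $(C,\partial)\to(B,0)$ (it is one because $\partial^2=0$). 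As $B$ is projective this sequence is $R$-split, so tensoring with $M$ preserves exactness:
\[
0\to Z\otimes_R M\xrightarrow{\iota\otimes 1}C\otimes_R M\xrightarrow{\partial\otimes 1}B\otimes_R M\to 0
\]
is again a short exact sequence of ungraded chain complexes.

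Next I would pass to its ($3$-periodic) homology long exact sequence
\[
\cdots\to B\otimes_R M\xrightarrow{\delta}Z\otimes_R M\to H(C\otimes_R M,\partial)\to B\otimes_R M\xrightarrow{\delta}Z\otimes_R M\to\cdots
\]
and identify the connecting map $\delta$ with $j\otimes 1$, where $j\co B\hookrightarrow Z$ is the inclusion; this is a one-line diagram chase (lift $b\otimes m$ along $\partial\otimes 1$ to $c\otimes m$ with $\partial c=b$, then $\partial(c\otimes m)=b\otimes m$, viewed in $Z\otimes_R M$). Exactness then gives
\[
0\to\cokernel(j\otimes 1)\to H(C\otimes_R M,\partial)\to\ker(j\otimes 1)\to 0.
\]
Applying $-\otimes_R M$ to $0\to B\xrightarrow{j}Z\to H\to 0$, right-exactness identifies $\cokernel(j\otimes 1)\cong H\otimes_R M$, while the $\Tor$ long exact sequence together with $\Tor^1_R(Z,M)=0$ (as $Z$ is flat) identifies $\ker(j\otimes 1)\cong\Tor^1_R(H,M)$. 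This produces the asserted short exact sequence, and naturality in $M$ (and in $(C,\partial)$) is inherited from naturality of the connecting homomorphism and of the $\Tor$ sequence.

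For the (unnatural) splitting I would fix an $R$-module decomposition $C\cong Z\oplus B'$ coming from a splitting of $0\to Z\to C\to B\to 0$ and consider the composite $\phi\co C\cong Z\oplus B'\to Z\onto Z/B=H$. Since $\partial$ takes values in $B$ one has $\phi\circ\partial=0$, so $\phi$ is a chain map $(C,\partial)\to(H,0)$ inducing the identity on $H(C,\partial)=H$; tensoring, $(\phi\otimes 1)_*\co H(C\otimes_R M,\partial)\to H\otimes_R M$ is a retraction of the inclusion $H\otimes_R M\hookrightarrow H(C\otimes_R M,\partial)$ coming from the sequence above, hence splits it. The splitting depends on the choice of $B'$, so it is not natural.

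I do not anticipate a genuine obstacle here: all the content is the bookkeeping in the middle two paragraphs—correctly replacing the classical "free in each degree" hypothesis by projectivity of the single module $C$ over a PID, identifying $\delta$ with $j\otimes 1$, and recognizing $\cokernel(j\otimes 1)$ and $\ker(j\otimes 1)$ as $H\otimes_R M$ and $\Tor^1_R(H,M)$.
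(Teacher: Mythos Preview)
Your proof is correct, but the paper takes a quite different (and much shorter) route. Rather than re-running the classical argument in the ungraded setting, the paper simply reduces to the graded case: it builds an honest $\ZZ$-graded chain complex $\widetilde{C}$ with $\widetilde{C}_n=C$ for $n\geq 0$, $\widetilde{C}_n=0$ for $n<0$, and differential $\partial_n=\partial$ for $n\geq 1$. Then $H_i(\widetilde{C})\cong H(C)$ for every $i>0$, and likewise $H_i(\widetilde{C}\otimes_R M)\cong H(C\otimes_R M)$ for $i>0$; applying the ordinary universal coefficient theorem to $\widetilde{C}$ in any degree $i>1$ yields the desired short exact sequence, together with naturality and the (unnatural) splitting.

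Your approach has the advantage of being self-contained---it shows explicitly why the $3$-periodic long exact sequence and the identification of the connecting map with $j\otimes 1$ work in the ungraded setting, and it makes the splitting map $\phi$ concrete. The paper's trick, on the other hand, is a one-liner once you see it, and avoids re-verifying any of the standard steps. Both are perfectly valid; the paper's reduction is just more economical, while yours is more transparent about what is actually happening.
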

\begin{proof}
  From $C$, construct an ordinary, bounded below, $\ZZ$-graded chain
  complex $\wt{C}$ by setting
  \[
    \wt{C}_n=
    \begin{cases}
      C & n\geq 0\\
      0 & n<0.
    \end{cases}
  \]
  and letting $\bdy_n\co \wt{C}_n\to \wt{C}_{n-1}$ be the map $\bdy$
  for all $n\geq 1$. Then, for any $i>0$, $H_i(\wt{C})\cong
  H(C)$. Applying the usual universal coefficient theorem for homology
  to $\wt{C}$ for any $i>0$ gives the result.
\end{proof}

\begin{proposition}\label{prop:PID-formal}
  Let $R$ be a principal ideal domain and let $C$ be a free chain
  complex over $R$. If $C$ is graded, assume that $C$ is finitely
  generated in each grading; if $C$ is ungraded, assume that $C$ is
  finitely generated. View the homology $H(C)$ as an honest
  $R$-module, i.e., with trivial higher operations $\mu_{1+n}$
  ($n>1$). Then, there is a quasi-isomorphism of $R$-modules
  $f\co C\to H(C)$.
\end{proposition}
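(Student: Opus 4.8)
The plan is to reduce the statement to constructing an honest $R$-linear quasi-isomorphism of chain complexes $C\to H(C)$, and then to build such a map using the structure theory of finitely generated modules over a PID. \emph{Step 1 (reduction).} The complex $C$ is an honest differential $R$-module, so $\mu^C_{1+n}=0$ for $n\geq 2$, and $H(C)$ is given the trivial higher operations. For such $M=C$ and $N=H(C)$, I claim the collection $f=(f_1,0,0,\dots)$ (all higher terms zero) is an $\Ainf$-module homomorphism if and only if $f_1\colon C\to H(C)$ is an $R$-linear chain map: unwinding the defining relation, the $n=0$ equation reads $f_1\circ\bdy_C=0$ (so $f_1$ is a chain map into a complex with zero differential), the $n=1$ equation becomes $f_1(za)=f_1(z)a$ (so $f_1$ is $R$-linear), and every equation with $n\geq 2$ is vacuous since each of its terms involves some $\mu_{1+j}$ or $f_{1+j}$ with $j\geq 2$, all of which vanish here. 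Moreover $f$ is an $\Ainf$ quasi-isomorphism precisely when $f_1\colon(C,\bdy_C)\to(H(C),0)$ is a quasi-isomorphism. Hence it suffices to produce an $R$-linear quasi-isomorphism $C\to H(C)$; no genuinely higher-homotopy data is needed.

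\emph{Step 2 (the graded case).} Here I would invoke the structure theory of complexes of finitely generated free modules over a PID. For each grading $n$, choose a splitting $C_n\cong Z_n\oplus B'_n$ with $Z_n=\ker(\bdy|_{C_n})$; this is possible because $\bdy(B'_n)=\image(\bdy|_{C_n})$ is a submodule of the free module $C_{n-1}$, hence free. Then choose bases of the modules $Z_n$ adapted, via Smith normal form, to the submodules $\image(\bdy|_{C_{n+1}})\subseteq Z_n$; this exhibits $C$ as a direct sum of elementary complexes, each either a single copy of $R$ concentrated in one grading or a two-term complex $R\xrightarrow{r}R$ with $r\neq 0$. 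For a single copy of $R$, the identity is an $R$-linear quasi-isomorphism onto its homology. For $R\xrightarrow{r}R$ with $r$ a unit, the complex is acyclic and the zero map works. For $R\xrightarrow{r}R$ with $r$ a nonzero nonunit (generators $y\mapsto rx$), the $R$-linear map sending $x$ to its class in $R/r$ and $y$ to $0$ is surjective with kernel the acyclic subcomplex $(R\xrightarrow{\cong}rR)$ (using that $r$ is not a zero divisor), hence a quasi-isomorphism onto $R/r$. The direct sum of these maps is the desired $R$-linear quasi-isomorphism $C\to H(C)$; since the decomposition is performed grading by grading, this also covers unbounded graded complexes that are finitely generated in each grading. (Alternatively, one can deduce the decomposition $C\cong\bigoplus_n H_n(C)[n]$ in the derived category $D(R)$ from the fact that a PID is hereditary, and then represent the isomorphism by a genuine chain map since $C$ is a complex of projectives.)

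\emph{Step 3 (the ungraded case) and the main obstacle.} As in the proof of \Lemma{univ-coeff}, I would pass to the bounded-below graded complex $\wt{C}$ with $\wt{C}_n=C$ for $n\geq 0$ and all differentials equal to $\bdy$. Step 2 produces an $R$-linear quasi-isomorphism $\phi\colon\wt{C}\to H(\wt{C})$; since the target has zero differential, $\phi$ is a family of $R$-linear maps $\phi_n\colon C\to H_n(\wt{C})$ each vanishing on $\image(\bdy)$. Because $H_1(\wt{C})=H(C)$ and $\phi$ is a quasi-isomorphism, the map on homology induced by $\phi_1\colon(C,\bdy)\to(H(C),0)$ is an isomorphism, so $\phi_1$ is an $R$-linear quasi-isomorphism of ungraded complexes, and Step 1 finishes the argument. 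The only real content is the decomposition in Step 2 — establishing (or carefully citing) that a complex of finitely generated free modules over a PID splits into the elementary summands above, with adapted bases chosen coherently in all gradings; everything else is formal bookkeeping, and in particular the $\Ainf$ structure plays no essential role.
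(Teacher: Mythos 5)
Your proof is correct, but it takes a different route from the paper for the ungraded case, which is the only case the paper actually argues in detail (the graded case being dismissed as well-known, as you also essentially note).

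The paper handles the ungraded complex directly: it sets $K=\ker(\bdy)$, observes $C/K$ is finitely generated and torsion-free (if $r\alpha\in K$ with $r\neq 0$ then $r\bdy\alpha=0$ forces $\bdy\alpha=0$ by freeness of $C$), hence free, so the sequence $0\to K\to C\to C/K\to 0$ splits; it then extends a basis of $K$ to a basis of $C$, writes $\bdy$ as a $2\times 2$ block matrix $\bigl(\begin{smallmatrix}0&A\\0&0\end{smallmatrix}\bigr)$, applies Smith normal form to the single block $A$, and reads off both $H(C)$ and the quasi-isomorphism. You instead prove the graded statement by splitting off the cycles degree by degree and applying Smith normal form in each degree to get a decomposition into elementary summands $R$ and $R\xrightarrow{r}R$, and then reduce the ungraded case to the graded one by passing to the bounded-below complex $\wt{C}$ already used in the proof of \Lemma{univ-coeff} and extracting the degree-$1$ component of the resulting graded quasi-isomorphism. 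Both arguments rest on the same PID structure theory; the paper's is marginally more economical since it needs only one application of Smith normal form and no detour through $\wt{C}$, while yours more cleanly separates ``graded structure theorem'' from ``formal reduction,'' reusing the $\wt{C}$ device that the paper introduced for the universal coefficient theorem. Your Step 1 is a correct observation, though since the statement only asks for a quasi-isomorphism of $R$-modules (i.e., an $R$-linear chain map inducing an isomorphism on homology), the $\Ainf$ bookkeeping there is not strictly necessary.
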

\begin{proof}
  In the graded case, this is well-known; we observe that the proof
  also works for ungraded complexes $(C,\bdy)$. Let $K=\ker(\bdy)$. We
  claim that $C/K$ is a free module. Since $C/K$ is finitely
  generated, from the classification of modules over a PID it suffices
  to show that $C/K$ is torsion-free; but if $[\alpha]\in C/K$
  satisfies $r[\alpha]=0$ for some $r\in R$ then $r\alpha\in K$ so
  either $\alpha\in K$ or $r=0$.

  Hence, the short exact sequence $0\to K\to C\to C/K\to 0$
  splits. So, we can extend an ordered basis $x_1,\dots,x_k$ for $K$
  to an ordered basis $x_1,\dots,x_k,y_1,\dots,y_\ell$ for $C$. With respect to this basis, the matrix for $\bdy$ has the form
  \[
    \begin{bmatrix}
      0 & A\\
      0 & 0
    \end{bmatrix}
  \]
  where $A$ is some $k\times\ell$ matrix. By changing basis among the
  $x_i$ and $y_j$ we can assume $A$ is in Smith normal form. So,
  assume that $A$ has entries $r_1,\dots,r_j$ on the diagonal
  ($j=\min\{k,\ell\}$) and $0$s off the diagonal. Then,
  \[
    H(C)\cong R/(r_1)\oplus\cdots\oplus R/(r_j)
  \]
  and the homomorphism $C\to H(C)$ sending $x_i$ to $1\in R/(r_i)$ and
  $y_j$ to $0$ is a quasi-isomorphism.
\end{proof}

\begin{remark}
  In this paper, we make use of $\Ainf$-modules over $\FF_2[X]/(X^2)$ and chain
  complexes of honest modules over $\FF_2[t^{-1},t]$. One might wonder why
  $\Ainf$-modules over $\FF_2[t^{-1},t]$ do not also make an appearance. This is
  because of Proposition~\ref{prop:PID-formal}, which shows that no interesting
  $\Ainf$ operations over $\FF_2[t^{-1},t]$ arise. (In particular, there are no
  interesting $\Ainf$ operations on $\HFa(Y;\FF_2[t^{-1},t])$.)
\end{remark}

\subsection{Further notions for \texorpdfstring{$\Ainf$-modules}{A-infinity modules}}
In this section, we recall a few more definitions and results
regarding $\Ainf$-modules.

\begin{definition}\label{def:su}
  Let $R$ be a ring with unit $1$.  A (graded or ungraded) $\Ainf$-module
  $(M,\{\mu_{1+i}\})$ over $R$ is \emph{strictly unital} if:
  \begin{itemize}[leftmargin=*]
  \item $\mu_2(x,1)=x$ for all $x\in M$, and
  \item $\mu_{1+n}(x,a_1,\dots,a_n)=0$ if $n>1$ and some $a_i=1$.
  \end{itemize}
  Similarly, a morphism $\{f_{1+n}\co M\otimes R^{\otimes n}\to N\}$
  of strictly unital $\Ainf$-modules is \emph{strictly unital} if
  $
    f_{1+n}(m,a_1,\dots,a_n)=0
  $
  if some $a_i=1$.
\end{definition}

\begin{convention}\label{conv:unital}
  Throughout this paper, all $\Ainf$-modules and maps are strictly unital.
\end{convention}

\begin{example}
  A strictly unital $\Ainf$-module over $\FF_2[X]/(X^2)$ is determined
  by the operations $\mu_{1+n}(\cdot,X,\dots,X)$.
\end{example}

There are several advantages of working with $\Ainf$-modules; we
highlight two (related) ones. First, $\Ainf$-module structures
transfer nicely under maps; results of this kind for $\Ainf$ objects
are often called \emph{homological perturbation theory}:
\begin{proposition}\label{prop:hpt}
  Let $R$ be an $\FF_2$-algebra and $(M,\mu^M)$ an $\Ainf$-module over
  $R$. Let $(N,\mu^N_1)$ be a chain complex over $\FF_2$ and
  $f_1\co (M,\mu^M_1)\to (N,\mu^N_1)$ a homotopy equivalence of chain
  complexes over $\FF_2$. Then, there is an $\Ainf$ structure
  $\{\mu^N_{1+n}\}$ on $N$ extending $\mu^N_1$ and an $\Ainf$ homotopy
  equivalence $f\co (M,\mu^M)\to(N,\mu^N)$ extending $f_1$.

  The corresponding statement also holds for $\Ainf$
  $(R,S)$-bimodules.
\end{proposition}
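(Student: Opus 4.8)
The plan is to prove this by \emph{homological perturbation theory}: we transfer the $\Ainf$-module structure from $M$ to $N$ along the given homotopy equivalence and read off the comparison map from the transfer formulas. The first step is to upgrade $f_1$ to the data of a deformation retraction. Since we work over the field $\FF_2$, where every ungraded or cyclically graded chain complex of vector spaces splits as the direct sum of its homology and an acyclic complex, the chain-homotopy equivalence $f_1\co(M,\mu^M_1)\to(N,\mu^N_1)$ can be completed to a chain map $g_1\co N\to M$ and a homotopy $h\co M\to M$ with $f_1\circ g_1=\Id_N$, $g_1\circ f_1+\Id_M=\mu^M_1\circ h+h\circ\mu^M_1$, and the side conditions $h\circ h=0$, $f_1\circ h=0$, $h\circ g_1=0$. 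We continue to write $\mu^N_1$ for the differential on $N$; since $f_1,g_1$ are chain maps, $\mu^N_1=f_1\circ\mu^M_1\circ g_1$.

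Second, I would write the transferred data as a sum over planar trees. Because $R$ is an honest associative algebra --- not an $\Ainf$-algebra, and it is not being transferred --- the only contributing trees are left combs along the module strand, so the formulas are explicit: for $n\geq1$,
\[
\mu^N_{1+n}(z,a_1,\dots,a_n)=\sum f_1\circ\mu^M_{1+j_k}\circ(h\otimes\Id)\circ\mu^M_{1+j_{k-1}}\circ\cdots\circ(h\otimes\Id)\circ\mu^M_{1+j_1}\circ(g_1\otimes\Id),
\]
the sum being over all $k\geq1$ and decompositions $n=j_1+\dots+j_k$ with each $j_\ell\geq1$, where $(g_1\otimes\Id)$ and $(h\otimes\Id)$ act on the module strand and $a_1,\dots,a_n$ are fed in order into the vertices $\mu^M_{1+j_1},\dots,\mu^M_{1+j_k}$; this sum is finite because each vertex consumes at least one algebra input. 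One then defines $f\co M\to N$ by $f_1$ together with the same combs but with the leading $g_1$ replaced by $h$ (for $n\geq1$), an $\Ainf$-map $g\co N\to M$ extending $g_1$ by the combs with the trailing $f_1$ replaced by $h$ (for $n\geq1$), and an $\Ainf$-homotopy $H\co M\to M$ extending $h$ by the combs with both $g_1$ and $f_1$ replaced by $h$. The standard tree-combinatorics argument then shows that $\{\mu^N_{1+n}\}$ satisfies the $\Ainf$-module relations, that $f$ and $g$ are $\Ainf$-module homomorphisms, and that $f\circ g$ and $g\circ f$ are $\Ainf$-homotopic to the identities (via $H$ on $M$), whence $f$ is an $\Ainf$-homotopy equivalence extending $f_1$. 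In characteristic $2$ every term has coefficient $1$, so each $\Ainf$-relation reduces to a fixed-point-free involution on the set of its terms, the pairing governed by the retract identity $g_1 f_1+\Id=\mu^M_1 h+h\mu^M_1$. It remains to check strict unitality (Definition~\ref{def:su}, Convention~\ref{conv:unital}): $\mu^N_2(z,1)=f_1\mu^M_2(g_1 z,1)=f_1 g_1 z=z$ by $f_1 g_1=\Id_N$, while for $n\geq2$ any comb containing an input equal to $1$ either vanishes --- some vertex $\mu^M_{1+j}$ with $j\geq2$ receives the unit and $\mu^M$ is strictly unital --- or cancels in pairs against the comb obtained by deleting the vertex $\mu^M_2(-,1)$; the same discussion applies to $f_{1+n}$, so $\mu^N$ and $f$ are strictly unital.

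The real work is the combinatorial verification of the $\Ainf$-relations and homotopies from the comb formulas, which is routine but lengthy; alternatively one may invoke the strictly unital homotopy transfer theorem from the homological-perturbation literature, once it is rephrased for $\Ainf$-modules over an honest algebra rather than for $\Ainf$-algebras. I do not expect any genuine obstacle beyond this bookkeeping: all transfer sums are finite in the ungraded and cyclically graded settings alike, and the needed splittings exist because $\FF_2$ is a field. Finally, the bimodule statement follows by the identical argument, with left combs along the module strand replaced by ``ladders'' that carry $R$-inputs on one side and $S$-inputs on the other of the vertices $\mu^M_{1+i+j}$, and with the evident modification of all of the formulas above.
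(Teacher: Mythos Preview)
The paper does not prove this proposition; it cites Keller's survey and Lipshitz--Ozsv\'ath--Thurston. Your sketch via homological perturbation with explicit comb formulas is the standard argument underlying those references, and most of it---the transfer formulas, the strict-unitality check, the bimodule variant---is sound.

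There is, however, a gap in your first step. You assert that the given $f_1$ can be completed to strong-deformation-retract data with $f_1\circ g_1=\Id_N$, but this forces $f_1$ to be surjective, which an arbitrary homotopy equivalence need not be (take $M=0$ and $N$ any nonzero acyclic complex, with $f_1=0$). That complexes over a field split as homology plus an acyclic piece yields retractions of $M$ and of $N$ onto their homologies, not a retraction along the given $f_1$; and without $f_1 g_1=\Id_N$ your comb formula for $\mu^N_{1+n}$ need not satisfy the $\Ainf$ relations. Two standard repairs: either run the two-sided perturbation lemma, keeping a second homotopy $k\colon N\to N$ with $f_1 g_1=\Id_N+[\partial,k]$ and enriching the combs with $k$-insertions; or transfer from $M$ to $H(M)$ along a genuine retraction, transport across $(f_1)_*\colon H(M)\xrightarrow{\cong} H(N)$, lift to $N$ by setting $\mu^N_{1+n}=i_N\,\mu^{H(N)}_{1+n}(p_N\otimes\Id)$ for any splitting $N\cong H(N)\oplus(\text{acyclic})$, and then correct the resulting $\Ainf$-map $F\colon M\to N$ (whose $F_1$ is only chain-homotopic to $f_1$) by adding the boundary, in the $\Ainf$-morphism complex, of that chain homotopy---this produces an $\Ainf$-equivalence with first component exactly $f_1$.
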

See, e.g., Keller's survey~\cite[Section 4.3]{Keller-other-intro},
or~\cite[Lemma 9.6]{LOT-hf-compute}. In particular, the former
reference has a nice description of the history of such results, and
the latter does not rely on gradings.

Second, for differential modules or chain complexes of modules, there
is an important distinction between homotopy equivalence and
quasi-isomorphism. This distinction does not exist for $\Ainf$-modules:
\begin{proposition}\label{prop:qi-is-he}
  Let $R$ be an algebra over $\FF_2$, $M$ and $N$ $\Ainf$ modules over
  $R$, and $f\co M\to N$ a quasi-isomorphism. Then, $f$ is a homotopy
  equivalence. Further, two ordinary differential modules $M$, $N$
  over $R$ are $\Ainf$ quasi-isomorphic (or homotopy equivalent) if
  and only if $M$ and $N$ are quasi-isomorphic in the usual sense.
\end{proposition}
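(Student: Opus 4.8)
The plan is to prove Proposition~\ref{prop:qi-is-he} by first establishing the $\Ainf$ statement and then deducing the claim about ordinary differential modules as a special case. For the $\Ainf$ statement, I would first reduce to the case of minimal models. Using Proposition~\ref{prop:hpt} (homological perturbation theory), every $\Ainf$ module $M$ over $R$ is $\Ainf$ homotopy equivalent to its homology $H(M)$ equipped with some transferred $\Ainf$ structure (apply the proposition with $N = H(M,\mu_1^M)$ and $f_1$ the inclusion of a deformation retract, which exists since we work over the field $\FF_2$). So, given a quasi-isomorphism $f\colon M\to N$, I would form the diagram $H(M)\xleftarrow{\simeq} M\xrightarrow{f} N\xrightarrow{\simeq} H(N)$, where the outer arrows are homotopy equivalences obtained from homological perturbation theory, and observe that the composite $g\colon H(M)\to H(N)$ is an $\Ainf$ morphism whose linear part $g_1$ is an isomorphism (it agrees with $f_*$ on homology, up to the identifications). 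Thus it suffices to prove: an $\Ainf$ morphism $g$ between $\Ainf$ modules whose linear part $g_1$ is an isomorphism of $\FF_2$-vector spaces is an $\Ainf$ homotopy equivalence. This is the technical heart.

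For that technical step, I would construct a homotopy inverse $h\colon H(N)\to H(M)$ by induction on the arity, solving the relevant $\Ainf$ relations order by order. Set $h_1 = g_1^{-1}$; then $h_1$ is automatically a chain map (the differentials on $H(M)$, $H(N)$ vanish, so the condition is vacuous). Having defined $h_1,\dots,h_{1+n-1}$, the $\Ainf$ morphism equation relating $h_{1+n}$ to lower-order terms and to $g$ has the schematic form $\mu_1^{H(M)}\circ h_{1+n} \pm h_{1+n}\circ\mu_1^{H(N)} = (\text{known lower-order terms})$; since both target differentials vanish, this forces the right-hand side to be zero and we may simply \emph{take} $h_{1+n}$ to be any solution — in fact, one shows the obstruction (the lower-order expression) vanishes identically, using the $\Ainf$ relations for $g$, $\mu^M$, $\mu^N$, and the inductive hypothesis, so $h_{1+n} := 0$ works provided we've set it up carefully; if not, one uses that $g_1$ is invertible to solve for $h_{1+n}$ explicitly. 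Then one checks $g\circ h$ and $h\circ g$ are homotopic to the respective identities by the analogous order-by-order argument, building the homotopy $k_{1+n}$ inductively and again using invertibility of $g_1$ to split off the leading term. This is a standard bookkeeping argument; I would cite a reference (e.g. the Keller survey or Lefèvre-Hasegawa) and present only the structure, since the explicit formulas are routine but tedious.

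Finally, for the statement about ordinary differential $R$-modules $M$, $N$: the forward direction is immediate, since a quasi-isomorphism of differential modules is in particular an $\Ainf$ quasi-isomorphism. For the converse, suppose $M$ and $N$ are $\Ainf$ quasi-isomorphic; by the first part they are $\Ainf$ homotopy equivalent, so there are $\Ainf$ morphisms $\phi\colon M\to N$, $\psi\colon N\to M$ with $\phi\circ\psi\simeq\Id_N$ and $\psi\circ\phi\simeq\Id_M$. The linear parts $\phi_1$, $\psi_1$ are then honest chain maps of differential $R$-modules, and the homotopy relations at arity $1$ show $\phi_1\circ\psi_1$ and $\psi_1\circ\phi_1$ are chain-homotopic (as maps of $R$-modules) to the identities. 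Hence $\phi_1$ is a quasi-isomorphism — indeed a homotopy equivalence — of ordinary differential $R$-modules.

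I expect the main obstacle to be the inductive construction of the homotopy inverse and the homotopies witnessing $g\circ h\simeq\Id$, $h\circ g\simeq\Id$: one must verify that at each stage the obstruction class — a specific alternating sum of composites of lower-order $\mu$'s, $g$'s, and $h$'s — actually vanishes (or lies in the image of the relevant boundary-type operator), which follows formally from the $\Ainf$ relations but requires care with the combinatorics of which terms appear. Because the paper only needs this over $\FF_2$ with $R = \FF_2[X]/(X^2)$, there are no sign subtleties, which simplifies the bookkeeping considerably; I would emphasize that and otherwise defer to the literature for the full verification.
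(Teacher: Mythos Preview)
Your approach to the first statement---reduce to minimal models via homological perturbation theory and then invert an $\Ainf$ morphism whose linear part is an isomorphism---is correct and standard, though different from the paper's sketch. The paper instead invokes the bar resolution: the canonical map $\overline{M}\to M$ from the bar resolution is an $\Ainf$ homotopy equivalence, and $\overline{M}$ is a projective (honest) $R$-module, so quasi-isomorphisms out of it are invertible up to homotopy. Your route is arguably more self-contained (it uses only Proposition~\ref{prop:hpt}, already stated in the paper), while the bar-resolution route is more conceptual and, as you'll see, better suited to the second statement.

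There is a genuine gap in your argument for the second statement. You claim that if $\phi\co M\to N$ is an $\Ainf$ homotopy equivalence between honest differential $R$-modules, then $\phi_1$ is ``an honest chain map of differential $R$-modules.'' This is false: the $n=1$ $\Ainf$-morphism relation reads
\[
\phi_1(z\cdot a)+\phi_1(z)\cdot a=\mu_1^N\bigl(\phi_2(z,a)\bigr)+\phi_2\bigl(\mu_1^M(z),a\bigr),
\]
so $\phi_1$ is $R$-linear only \emph{up to the chain homotopy} $\phi_2$, not on the nose. Likewise the $\Ainf$ homotopy $k$ gives an $\FF_2$-linear, not $R$-linear, chain homotopy $k_1$. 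So you cannot conclude that $\phi_1$ is a quasi-isomorphism of differential $R$-modules; you only get that $(\phi_1)_*$ is an $R$-linear isomorphism on homology, which for general $R$ is strictly weaker than $M$ and $N$ being isomorphic in $D(R)$.

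This is exactly where the bar-resolution approach pays off. The bar resolution $\overline{M}$ is an honest projective differential $R$-module and the augmentation $\overline{M}\to M$ is an honest $R$-linear quasi-isomorphism. Replacing $M$ and $N$ by $\overline{M}$ and $\overline{N}$, one reduces to the projective case, where the derived and $\Ainf$ homotopy categories coincide. Your minimal-model reduction does not produce honest $R$-modules (the transferred structure on $H(M)$ may have nontrivial higher operations), so it does not help here. If you want to keep your overall strategy, you should cite the equivalence between the derived category of $R$-modules and the homotopy category of $\Ainf$ $R$-modules (as in Keller or Lef\`evre-Hasegawa) for the second statement rather than attempt to extract an $R$-linear map from $\phi$.
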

See, e.g., Keller's paper~\cite[Section 4.3]{Keller-other-intro},
or~\cite[Proposition 2.4.1]{LOT-hf-bimodules}. (Again, the latter
reference does not rely on gradings.) The point is that the map from
the bar resolution of $M$ to $M$ is an $\Ainf$ homotopy equivalence,
and the bar resolution is a projective module. Hence, the
quasi-isomorphism to the bar resolution is invertible up to homotopy,
and hence any quasi-isomorphism is invertible up to homotopy.

\subsection{The unrolled homology}\label{sec:unrolled}
Recall that given an $\Ainf$-module $C$ over $\FF_2[X]/(X^2)$, in
Definition~\ref{def:unrolled} we defined the unrolled complex
$\unroll{C}$ of $C$.

\begin{lemma}\label{lem:X-hom-well-def-ungr}
  Let $(C,\{\mu_{1+n}^C\})$ and $(D,\{\mu_{1+n}^D\})$ be finitely
  generated, graded or ungraded $\Ainf$-modules over $\othR=\FF_2[X]/(X^2)$. A
  homomorphism of $\Ainf$-modules $f\co C\to D$ induces a homomorphism
  $F\co \unroll{C}\to\unroll{D}$, and if $f$ is a quasi-isomorphism
  then so is $F$.
\end{lemma}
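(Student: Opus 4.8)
The plan is to write down $F$ by the evident formula, observe that the unrolling construction is functorial on the \emph{homotopy} category of $\Ainf$-modules over $\othR$, and then invoke Proposition~\ref{prop:qi-is-he} to promote the quasi-isomorphism $f$ to a homotopy equivalence, at which point the conclusion is formal.

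Given a (strictly unital) $\Ainf$-homomorphism $f=\{f_{1+n}\}$, define
\[
  F(z\otimes Y^n)=\sum_{m\geq 0} f_{1+m}(z,\overbrace{X,\dots,X}^m)\otimes Y^{n+m}.
\]
The finite-generation hypothesis is exactly what is needed for this sum to define an element of $\unroll D = D\otimes_{\FF_2}\FF_2[Y^{-1},Y]]$: each term lies in $D\otimes Y^{n+m}$ with no terms below $Y^n$, and $D$ is finite-dimensional over $\FF_2$ since it is finitely generated over the $2$-dimensional algebra $\othR$ (in the graded case one uses finiteness in each grading). By construction $F$ commutes with multiplication by $Y$, so it is $\FF_2[Y^{-1},Y]]$-linear. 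That $F$ is a chain map is a direct check: the coefficient of $Y^{n+N}$ in $F\bdy_{\unroll C}(z\otimes Y^n)$ is $\sum_{i+j=N}f_{1+i}(\mu^C_{1+j}(z,X,\dots,X),X,\dots,X)$ and in $\bdy_{\unroll D}F(z\otimes Y^n)$ is $\sum_{i+j=N}\mu^D_{1+i}(f_{1+j}(z,X,\dots,X),X,\dots,X)$; these agree, in characteristic $2$, because the $\Ainf$-homomorphism relation with all algebra inputs equal to $X$ says precisely that their sum vanishes — the ``internal product'' terms $a_ia_{i+1}$ appearing in that relation drop out since $X^2=0$ in $\othR$.

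Next I would record the bookkeeping identities, all proved the same way (feed $X$'s into the structure relations; the $a_ia_{i+1}$ terms vanish): $\unroll{\Id_C}=\Id_{\unroll C}$; $\unroll{g\circ f}=\unroll g\circ\unroll f$ for composable $\Ainf$-homomorphisms; and if $k$ is an $\Ainf$-homotopy from $f$ to $g$, then $K(z\otimes Y^n)=\sum_{m\geq 0}k_{1+m}(z,X,\dots,X)\otimes Y^{n+m}$ is a chain homotopy (over $\FF_2[Y^{-1},Y]]$) from $\unroll f$ to $\unroll g$. Consequently $\unroll{-}$ carries $\Ainf$-homotopy equivalences to chain homotopy equivalences, hence to quasi-isomorphisms.

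Finally, assume $f$ is a quasi-isomorphism, i.e.\ $f_1$ is a quasi-isomorphism of $\FF_2$-chain complexes. Since $\othR=\FF_2[X]/(X^2)$ is an $\FF_2$-algebra, Proposition~\ref{prop:qi-is-he} says $f$ is an $\Ainf$-homotopy equivalence, and then the previous paragraph gives that $F=\unroll f$ is a chain homotopy equivalence, in particular a quasi-isomorphism; the argument ignores gradings, so the graded and ungraded cases are handled simultaneously. The only real content is the two verifications in the second and third paragraphs, and both are instances of the single observation that plugging $X$'s into the $\Ainf$-relations annihilates every term with an internal multiplication. If one wished to avoid Proposition~\ref{prop:qi-is-he}, one could instead filter $\unroll C$ by $Y$-degree — the associated graded being a direct sum of shifted copies of $(C,\mu_1^C)$, on which $F$ induces copies of $f_1$ — but then one must check convergence of the resulting spectral sequence in the completed (``Laurent series'') setting, which the homotopy-theoretic route avoids.
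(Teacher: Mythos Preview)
Your proof is correct and follows essentially the same route as the paper: define $F$ by the same formula, verify functoriality (identity, composition, homotopies) of the unrolling construction, and then invoke Proposition~\ref{prop:qi-is-he} to upgrade the quasi-isomorphism to a homotopy equivalence. Your writeup is in fact slightly more explicit than the paper's (you spell out why the $a_ia_{i+1}$ terms die via $X^2=0$, and why finite generation is needed for well-definedness of the infinite sum in $D\otimes_{\FF_2}\FF_2[Y^{-1},Y]]$), and your closing remark about the alternative filtration argument is a nice aside the paper omits.
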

\begin{proof}
  Given a collection of maps
  $f_{1+n}\co C\otimes_{\FF_2} \othR^{\otimes n}\to D$ define a map
  \[
    F\co \unroll{C}\to\unroll{D}
  \]
  by 
  \[
    F(z\otimes Y^n)= \sum_{m\geq 0}
    f_{1+m}(z,\overbrace{X,\cdots,X}^m)\otimes Y^{n+m}.
  \]

  It is immediate from the construction that:
  \begin{itemize}[leftmargin=*]
  \item If $f$ is the identity map (i.e., $f_1=\Id$ and $f_n=0$ for
    $n>1$) then the induced map $F$ is also the identity map.
  \item The map $F$ associated to a collection of maps $f=\{f_{1+n}\}$ is
    well-defined. (In particular, this uses the fact that we have
    completed with respect to $Y$.)
  \item The map $F$ associated to a collection of maps $f=\{f_{1+n}\}$ is an
    $\FF_2[Y^{-1},Y]]$-module homomorphism.
  \item If $f=\{f_{1+n}\}$ is an $\Ainf$-module homomorphism then $F$ is a chain
    map. (In fact, $F$ is a chain map if and only if $f$ is an $\Ainf$
    module homomorphism.)
  \item If $k$ is a homotopy between $\Ainf$-module homomorphisms $f$
    and $g$ then the induced map $K$ is a chain homotopy between $F$
    and $G$.
  \item The map associated to $g\circ f$ is the composition of the
    maps $G$ associated to $g$ and $F$ associated to $f$.
  \end{itemize}
  It follows that homotopy equivalent $\Ainf$-modules have homotopy
  equivalent unrolled complexes. Since by
  Proposition~\ref{prop:qi-is-he},
  quasi-isomorphism and homotopy
  equivalence agree for $\Ainf$-modules (over an algebra over a
  field), this proves the result.
\end{proof}

\begin{lemma}\label{lem:qf-unroll-acyclic}
  Let $C$ be a (graded or ungraded) chain complex over
  $\FF_2[X]/(X^2)$, not necessarily free.  If $C$ is quasi-free then
  $\unroll{C}$ is acyclic.
\end{lemma}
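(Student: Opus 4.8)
The plan is to reduce to the case of a bounded complex of free modules and then run a short finite‑filtration argument.

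First I would use that the unrolling construction behaves well under quasi‑isomorphism. By definition, $C$ quasi‑free means that $C$ is quasi‑isomorphic, as a differential $\FF_2[X]/(X^2)$‑module (equivalently, as an $\Ainf$ module), to a bounded chain complex $D$ of free $\FF_2[X]/(X^2)$‑modules. By \Proposition{qi-is-he}, this quasi‑isomorphism is in fact an $\Ainf$ homotopy equivalence; and the observations collected in the proof of \Lemma{X-hom-well-def-ungr}---that the unrolling of an $\Ainf$ module homomorphism is a chain map, that the unrolling of an $\Ainf$ homotopy is a chain homotopy, and that composition is respected---show that homotopy‑equivalent $\Ainf$ modules over $\FF_2[X]/(X^2)$ have chain‑homotopy‑equivalent unrolled complexes. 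Hence $\unroll{C}\simeq\unroll{D}$, and it suffices to prove that $\unroll{D}$ is acyclic.

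Next I would record the rank‑one computation. Write $R=\FF_2[X]/(X^2)$ and $S=\FF_2[Y^{-1},Y]]$. Viewing $R$ as a chain complex concentrated in one degree with zero differential, $\unroll{R}=R\otimes_{\FF_2}S$ is free over $S$ on the two generators $1\otimes 1$ and $X\otimes 1$, and by \Definition{unrolled} its differential sends $1\otimes 1\mapsto Y\cdot(X\otimes 1)$ and $X\otimes 1\mapsto 0$. Thus $\ker(\bdy)=S\cdot(X\otimes 1)$, while $\image(\bdy)=YS\cdot(X\otimes 1)=S\cdot(X\otimes 1)$ because $Y$ is a unit of $S$; so $\unroll{R}$ is acyclic. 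Consequently, for any free $R$‑module $P$ (with zero differential), $\unroll{P}$ is a direct sum of copies of $\unroll{R}$ and is therefore acyclic as well.

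Then I would filter $\unroll{D}$. Say $D$ is supported in homological degrees $[a,b]$, and set $F_p=\bigoplus_{i\le p}D_i\otimes_{\FF_2}S\subseteq\unroll{D}$. The differential on $\unroll{D}$ is the sum of $\bdy_D\otimes\Id$, which lowers homological degree by one, and the term $z\otimes Y^n\mapsto zX\otimes Y^{n+1}$, which preserves homological degree; hence each $F_p$ is a subcomplex. On the associated graded, $F_p/F_{p-1}=D_p\otimes_{\FF_2}S$ carries only the second term of the differential, i.e.\ it is $\unroll{D_p}$ for the free module $D_p$, which is acyclic by the previous paragraph. Since $D$ is bounded, the filtration is finite ($F_{a-1}=0$, $F_b=\unroll{D}$), so the associated spectral sequence converges; its $E_1$‑page vanishes, whence $\unroll{D}$, and therefore $\unroll{C}$, is acyclic. (Equivalently, one can induct up the filtration using the long exact sequences of $0\to\unroll{F_{p-1}}\to\unroll{F_p}\to F_p/F_{p-1}\to 0$.) The same argument applies verbatim in the graded and ungraded cases.

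I do not expect a serious obstacle. The two points that require care are: that ``quasi‑free'' is defined via a quasi‑isomorphism, so \Proposition{qi-is-he} must be invoked to upgrade it to an $\Ainf$ homotopy equivalence before applying the unrolling construction; and that the boundedness of $D$ is exactly what makes the filtration finite, hence the spectral sequence convergent. The cyclic grading present in the Heegaard Floer and Khovanov settings causes no trouble, since everything above is phrased for ungraded complexes.
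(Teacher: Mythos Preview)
Your proof is correct and follows the same reduction step as the paper (via \Lemma{X-hom-well-def-ungr}), but your endgame is a bit cleaner than the paper's. The paper distinguishes two cases after reducing to a free module: a ``warm-up'' graded case, handled by the vertical filtration whose $d^0$ is multiplication by $X$ (exactly your spectral sequence), and a general ungraded case, handled by an explicit matrix computation showing that the differential on $\unroll{C}$ over $\FF_2[Y^{-1},Y]]$ has full rank. Your observation is that this case split is unnecessary: since quasi-free means quasi-isomorphic to a \emph{bounded chain complex} $D$ of free modules, $D$ always carries a homological grading preserved by $X$, regardless of whether $C$ itself was graded, and the finite filtration by that grading runs the warm-up argument verbatim. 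This buys you a uniform and more conceptual proof; the paper's matrix argument, on the other hand, makes the acyclicity completely explicit (one sees the invertible block $B+IY$ directly) at the cost of choosing a basis and assuming finite-dimensionality.
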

\begin{proof}
  By Lemma~\ref{lem:X-hom-well-def-ungr} it suffices to prove the
  result when $C$ is a finite-dimensional free module (with a
  differential).

  As a warm-up, we start with the graded case when $X$ has grading
  $0$. Consider the spectral sequence associated to the vertical
  filtration on $\unroll{C_*}$, where the $d^0$-differential is multiplication by
  $X$. Since $C_*$ is free, this $d^0$-differential is exact. Hence,
  for this spectral sequence, $E^1=0$. Since $C_*$ is bounded, this
  implies that $H_*(\unroll{C_*})=0$, as well. This proves the result.

  Essentially the same argument works in the general (ungraded)
  case. Choose an ordered basis $[e_1,\dots,e_N,f_1,\dots,f_N]$ for
  $C$ over $\FF_2$, where $Xe_i=f_i$. Write the differential on $C$ as
  a block matrix
  $\left(\begin{smallmatrix}
      A & D\\
      B & E
    \end{smallmatrix}\right)$
  where each block is $N\times N$. Since $\bdy(f_i)=X\bdy(e_i)$, we have $D=0$ and $A=E$, so the differential actually has the form
  $\left(\begin{smallmatrix}
      A & 0\\
      B & A
    \end{smallmatrix}\right)$.

  The complex $\unroll{C}$ is a vector space over $\FF_2[Y^{-1},Y]]$ with ordered basis $[e_1\otimes 1,\dots,e_N\otimes 1,f_1\otimes 1,\dots,f_N\otimes 1]$.
  The differential on $\unroll{C}$ has the form
  \[
    \begin{pmatrix}
      A & 0\\
      B+IY & A
    \end{pmatrix}
  \]
  where $I$ denotes the $N\times N$ identity matrix.  Since
  $\bdy^2=0$, the differential on $\unroll{C}$ has rank at most $N$,
  so since
  \[
    \det(B+IY)=Y^N+\text{lower order terms}\neq 0,
  \]
  $(B+IY)$ is invertible, the differential on $\unroll{C}$ has rank
  equal $N$. Hence, since $\FF_2[Y^{-1},Y]]$ is a field, $\unroll{C}$ is
  acyclic, as claimed.
\end{proof}

The spectral sequence in the (graded case of the) proof of
Lemma~\ref{lem:qf-unroll-acyclic} is only well-behaved under
restrictive hypotheses: for unbounded chain complexes,
convergence becomes a problem. (Consider, for
example, the chain complex
$0\leftarrow \FF_2[X]/(X^2)\stackrel{X}{\lla} \FF_2[X]/(X^2)\stackrel{X}{\lla}\cdots$.)
On the other hand, because we have completed with respect to $Y$, the
\emph{horizontal filtration} of $\unroll{C_*}$, by the power of $Y$, induces
a spectral sequence that is well-behaved even for $C_*$ unbounded or
ungraded. For this spectral sequence, the $d^0$-differential is the
differential on $C_*$, the $d^1$-differential is the action of $X$ on
the homology of $C_*$, and the higher differentials are induced from
the $\Ainf$ operations on the homology of $C_*$.

\begin{remark}
  In the language of bordered Floer theory~\cite[Section
  8]{LOT-hf-HomPair}, there is a rank $1$ type $\mathit{DD}$ bimodule over
  $\FF_2[X]/(X^2)$ and $\FF_2[Y]$ defined by $P=\langle\iota\rangle$ and
  $\delta^1(\iota)=(X\otimes Y)\otimes\iota$. The bimodule $P$
  witnesses the Koszul duality between $\FF_2[X]/(X^2)$ and
  $\FF_2[Y]$. The unrolled complex is obtained by taking the box
  tensor product with $P$, modulifying the result, and extending
  scalars from $\FF_2[[Y]]$ to $\FF_2[Y^{-1},Y]]$. The appearance of
  power series in $Y$ relates to operational boundedness
  (cf.~\cite[Section 9]{LOT-hf-diags}).
\end{remark}

\section{Two views of the module structure on Heegaard Floer homology}\label{sec:HF-module}
\subsection{Geometry: holomorphic curves with point constraints}\label{sec:holomorphic}
Fix a commutative ring $k$.

Let $Y$ be a closed, oriented $3$-manifold and let
$\HD=(\Sigma,\alphas,\betas,z)$ be a weakly admissible pointed
Heegaard diagram for $Y$. Given an abelian group $G$, a
\emph{$G$-valued additive assignment} is a function
$A\co \pi_2(x,y)\to G$ for each pair of points
$x,y\in T_\alpha\cap T_\beta$ so that for all
$w,x,y\in T_\alpha\cap T_\beta$, $\phi\in\pi_2(w,x)$, and
$\psi\in\pi_2(x,y)$, $A(\phi*\psi)=A(\phi)+A(\psi)$. Given a
$G$-valued additive assignment $A$, there is an associated twisted
Floer complex with coefficients in the group ring $\FF_2[G]$,
\[
  \tCFa(Y;\FF_2[G]_A)=\tCFa(\HD;\FF_2[G]_A)=\bigoplus_{x\in T_\alpha\cap T_\beta}\FF_2[G],
\]
with differential
\[
  \underline{\bdy}(x)=
  \sum_{y\in T_\alpha\cap T_\beta}
  \sum_{\substack{\phi\in\pi_2(x,y)\\\mu(\phi)=1,\ n_z(\phi)=0}}
  \bigl(\#\cM^\phi(x,y)\bigr)t^{A(\phi)}y.
\]
Here, we are writing elements of $\FF_2[G]$ as linear combinations
$\sum n_it^{g_i}$ with $n_i\in \FF_2$ and $g_i\in G$, and $\cM^\phi(x,y)$
denotes the moduli space of holomorphic Whitney disks connecting $x$
to $y$ in the homotopy class $\phi$ (modulo the action of $\RR$ on the
source), with respect to a sufficiently generic family of almost
complex structures. It turns out that there is a universal,
\emph{totally twisted coefficient} Floer homology
$\tCFa(Y;\FF_2[H_2(Y)]_A)$, where $A$ is any $H_2(Y)$-valued additive assignment which is
bijective on $\{\phi\in \pi_2(x,x)\mid n_z(\phi)=0\}$, and any other
twisted Floer complex is obtained from the totally twisted coefficient
Floer complex by extension of scalars. (In particular,
Ozsv\'ath-Szab\'o originally defined Heegaard Floer homology with
twisted coefficients via the totally twisted Floer complex and
extension of scalars~\cite[Section 8]{OSz-hf-applications}.)

Recall that each homotopy class $\phi\in\pi_2(x,y)$ is represented by
a cellular $2$-chain in $(\Sigma,\alphas\cup\betas)$, called its
\emph{domain} $D(\phi)$. Let $\bdy_\alpha D(\phi)$ be the part of
$\bdy D(\phi)$ lying in the $\alpha$-circles. Fix an embedded,
oriented $1$-manifold $\zeta\subset \Sigma$ which intersects $\alphas$
transversely and is disjoint from $\alphas\cap\betas$. There is a
corresponding $\ZZ$-valued additive assignment
\[
  \phi\mapsto \zeta\cdot \bdy_\alpha D(\phi),
\]
the algebraic intersection number of $\zeta$ with
$\bdy_\alpha D(\phi)$. This additive assignment gives a twisted
coefficient complex $\tCFa(\HD;\FF_2[t^{-1},t]_{\zeta})$ with differential
\begin{equation}\label{eq:t-bdy}
  \underline{\bdy}(x)=
  \sum_{y\in T_\alpha\cap T_\beta}
  \sum_{\substack{\phi\in\pi_2(x,y)\\\mu(\phi)=1,\ n_z(\phi)=0}}
  \bigl(\#\cM^\phi(x,y)\bigr)t^{\zeta\cdot \bdy_\alpha D(\phi)}y.
\end{equation}
It is not hard to show that, up to quasi-isomorphism, the complex
$\tCFa(\HD;\FF_2[t^{-1},t]_{\zeta})$ depends on $\zeta$ only through the
homology class $[\zeta]\in H_1(Y)/\tors=\Hom(H_2(Y),\ZZ)$ it represents.

Of course, there is also an untwisted Heegaard Floer homology group
\[
  \CFa(Y;\FF_2)=\CFa(\HD;\FF_2)=\bigoplus_{x\in T_\alpha\cap T_\beta}\FF_2
\]
with differential
\begin{equation}\label{eq:un-t-bdy}
  \bdy(x)=
  \sum_{y\in T_\alpha\cap T_\beta}
  \sum_{\substack{\phi\in\pi_2(x,y)\\\mu(\phi)=1,\ n_z(\phi)=0}}
  \bigl(\#\cM^\phi(x,y)\bigr)y.
\end{equation}
As Ozsv\'ath-Szab\'o noted~\cite[Section 4.2.5]{OSz-hf-3manifolds}, the untwisted
Heegaard Floer complex $\CFa(Y;\FF_2)$ inherits an action of
$H_1(Y)/\tors$ via the formula
\begin{equation}\label{eq:H1-act-1}
  \zeta\cdot x=
  \sum_{y\in T_\alpha\cap T_\beta}
  \sum_{\substack{\phi\in\pi_2(x,y)\\\mu(\phi)=1,\ n_z(\phi)=0}}
  \bigl(\#\cM^\phi(x,y)\bigr)\bigl(\zeta\cdot \bdy_\alpha D(\phi)\bigr)y.
\end{equation}
(The action of $\zeta$ lowers the Maslov grading by $1$.)
As they show, at the level of homology this endows $\HFa(Y)$ with the
structure of a module over the exterior algebra
$\Lambda^*(H_1(Y)/\tors)$.  (There is a tiny but relevant omission in
Ozsv\'ath-Szab\'o's argument~\cite[Proof of Proposition
4.17]{OSz-hf-3manifolds}: they dropped the homotopy term which is
discussed below. See also~\cite[Proof of Proposition
8.6]{Lip-hf-cylindrical}.)

This statement can be refined slightly to make $\CFa(\HD)$, and hence
$\HFa(\HD)$, into an $\Ainf$-module over
$\Lambda^*(H_1(Y)/\tors)$. This is a special case of the quantum cap
product in Floer (co)homology, as sketched, say, by Seidel~\cite[Section
  8l]{Seidel-top-book} or Perutz~\cite[Section
  3.9]{Perutz-top-matching2}. 
Rather than describe the general case, we will focus on the action
by a single element of $H_1(Y)$, where tracking perturbations is less
cumbersome; this is sufficient for our applications.

So, fix an oriented multicurve $\zeta\subset \Sigma$ representing an
element of $H_1(Y)$, such that $\zeta\pitchfork\alphas$ and
$\zeta\cap\alphas\cap\betas=\emptyset$. Let $A_1=\zeta\cap\alphas$.
The orientations of $\zeta$ and $\Sigma$ induce a coorientation of
$\zeta$; let $A_i\subset\alphas$ be a small pushoff of $A_1$ so that
each point of $A_{i+1}$ is in the negative direction of
the coorientation of $\zeta$ from the corresponding point of $A_i$.

There are corresponding subsets
\begin{align*}
  C_{i}&=\{(x_1,\dots,x_g)\in T_\alpha\mid x_k\in A_i\text{ for some }k\}\\
  C_{i,j}&=\{(x_1,\dots,x_g)\in T_\alpha\mid x_k\in A_i,\ x_\ell\in
              A_j\text{ for some }k\neq \ell\}.
\end{align*}
The sets $C_i$ and $C_{i,j}$ are finite unions of submanifolds of
$T_\alpha$, of codimension 1 and 2, respectively.

Given integers $i_1,\cdots,i_k$, consider the moduli space
\begin{equation}\label{eq:C-mod-1}
  \cM^\phi(x,y;C_{i_1},\dots,C_{i_k})
\end{equation}
of holomorphic Whitney disks
\[
  u\co ([0,1]\times\RR, \{1\}\times\RR,\{0\}\times\RR)\to (\Sym^g(\Sigma),T_\alpha,T_\beta)
\]
together with points $(1,t_1),\dots,(1,t_k)\in\{1\}\times\RR$ with $t_1<\cdots<t_k$ with
$u(1,t_j)\in C_{i_j}$. There is also a moduli space
\begin{equation}\label{eq:C-mod-2}
  \cM^\phi(x,y;C_{i_1},\dots,C_{i_{\ell-1}},C_{i_\ell,i_{\ell+1}},C_{i_{\ell+2}},\dots,C_{i_k})
\end{equation}
defined similarly except with $u(1,t_\ell)\in C_{i_\ell,i_{\ell+1}}$.

Choose $\zeta$ so that for every disk $u$ with Maslov index $1$,
$C_1\pitchfork u|_{\{1\}\times\RR}$. (This is possible since there are finitely many
disks $u$ with Maslov index $1$.) Let $U$ be a neighborhood of
$A_1=\zeta\cap\alphas$ small enough that for all Maslov index $1$
disks $u$ and all  $a\in U$,
\[
 \{(x_1,\dots,x_g)\in T_\alpha\mid x_k=a\text{ for some }k\}\pitchfork u|_{\{1\}\times\RR}.
\]
Choose
the perturbations $A_i$ to be entirely contained in $U$.  Then, these
perturbations have the following two properties:
\begin{enumerate}[label=(M-\arabic*),leftmargin=*]
\item The moduli spaces in Equations~\eqref{eq:C-mod-1}
  and~\eqref{eq:C-mod-2} are transversely cut out.
\item\label{item:close-1} The moduli spaces
  $\cM^\phi(x,y;C_{1},\dots,C_{k})$ and
  $\cM^\phi(x,y;C_{i+1},\dots,C_{i+k})$ are identified for all
  $i$. 
\end{enumerate}

Now, define the operation
\[
  \mu_{1+n}\co \CFa(\HD)\otimes \FF_2[X]/(X^2)^{\otimes n}\to \CFa(\HD)
\]
by
\begin{equation}\label{eq:Ainf-holo}
  \mu_{1+n}(x,X,\dots,X)= \sum_{y\in T_\alpha\cap T_\beta}
  \sum_{\substack{\phi\in\pi_2(x,y)\\\mu(\phi)=1,\ n_z(\phi)=0}}
  \bigl(\#\cM^\phi(x,y;C_1,\dots,C_n)\bigr)y.
\end{equation}
Define the operation $\mu_1$ to be the differential on
$\CFa(\HD)$. Observe that the operation $\mu_2$ is the restriction of
the $H_1(Y)/\tors$ action.

\begin{lemma}\label{HF-ainf-well-defd}
  The operations $\mu_{1+n}$ satisfy the $\Ainf$ relations, so
  $\CFa(\HD)$ inherits the structure of an $\Ainf$-module.
\end{lemma}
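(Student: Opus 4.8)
The statement is that the operations $\mu_{1+n}$ defined via holomorphic disks with point constraints on the $C_i$ satisfy the $\Ainf$ module relations. The strategy is the standard one for proving identities in Floer theory: identify the relevant one-dimensional moduli spaces, and count the ends of their compactifications. Concretely, fix $x$, fix a homotopy class $\phi\in\pi_2(x,y)$ with $\mu(\phi)=2$, and consider the moduli space $\cM^\phi(x,y;C_1,\dots,C_n)$ of holomorphic Whitney disks in the class $\phi$ together with $n$ ordered marked points on the $\{1\}\times\RR$ boundary mapping to $C_1,\dots,C_n$ respectively. After quotienting by the $\RR$-action this is a one-manifold (by transversality assumption (M-1) applied to the appropriate strata), and the signed (here, mod 2) count of its boundary points is zero. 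The plan is to enumerate those boundary points and check that they are exactly the terms appearing in the $\Ainf$ relation for $\mu$.

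First I would recall the sources of ends. There are three phenomena. (a) \emph{Disk degenerations}: the domain $\phi$ breaks as $\phi_1 * \phi_2$ with $\mu(\phi_1)=\mu(\phi_2)=1$, and the $n$ marked points distribute onto the two levels with the first $j$ on the $\phi_1$-level and the rest on the $\phi_2$-level (since the marked points are on the $\{1\}\times\RR$-boundary and are ordered, the combinatorics of which level they land on is exactly a "first $j$, last $n-j$" split). Such an end contributes $\mu_{1+i}(\mu_{1+j}(x,X,\dots,X),X,\dots,X)$ with $i+j=n$; this is the first sum in the $\Ainf$ relation. (b) \emph{Collision of two adjacent marked points}: $t_\ell$ and $t_{\ell+1}$ come together, and in the limit the map sends that point to $C_\ell\cap C_{\ell+1}$-type data, i.e. to the codimension-$2$ locus, which is where the $C_{i,i+1}$-moduli space of Equation~\eqref{eq:C-mod-2} enters. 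Because on $\FF_2[X]/(X^2)$ we have $X\cdot X = 0$, the product $a_\ell a_{\ell+1}=X^2 = 0$, so these collision ends must cancel \emph{among themselves} — this is exactly the content of the second sum $\sum_i \mu_n(z,\dots,a_ia_{i+1},\dots)$ being zero in our algebra, and I would verify that the collision ends cancel in pairs (or against the boundary of the $C_{i,i+1}$-moduli space) so that they contribute $0$, consistently. (c) \emph{A marked point escaping to $t_\ell \to \pm\infty$}: here one must check that, with the chosen small perturbations, the marked point running off the end of the disk is accounted for either by a disk degeneration of type (a) (the constraint "slides onto" a broken-off disk of Maslov index $1$ carrying no other marked point, giving a $\mu_2$ factor on one side) or is excluded by the admissibility/energy hypotheses. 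This is where assumption \ref{item:close-1} — that the perturbed loci $C_1,\dots,C_k$ may be identified with $C_{i+1},\dots,C_{i+k}$, i.e. the perturbations are small — is used: it guarantees that no genuinely new configuration appears, and that pushing a constraint off the end recovers an honest lower $\mu$-operation rather than some artifact of the perturbation.

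Having listed the ends, I would assemble them: the total (mod 2) count being zero reads
\[
\sum_{i+j=n}\mu_{1+i}(\mu_{1+j}(x,X,\dots,X),X,\dots,X) \;+\; (\text{collision terms}) \;=\; 0,
\]
and since the collision terms vanish because $X^2=0$ (more precisely, the map $R^{\otimes n}\to R^{\otimes(n-1)}$ using the multiplication sends $X\otimes X$ to $0$), this is precisely the $\Ainf$ module relation
\[
\sum_{i+j=n}\mu_{1+i}(\mu_{1+j}(z,a_1,\dots,a_j),a_{j+1},\dots,a_n) + \sum_{i=1}^{n-1}\mu_n(z,a_1,\dots,a_{i-1},a_ia_{i+1},\dots,a_n)=0
\]
evaluated at $a_1=\dots=a_n=X$, which by the \Example\ following \Definition{su} determines the whole (strictly unital) module structure. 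Strict unitality itself ($\mu_2(x,1)=x$ and vanishing when some input is $1$) holds by construction, since we only ever defined the nontrivial operations with all inputs equal to $X$ and declared the $1$-inputs trivially; one notes $\mu_2(x,X)$ is the $H_1$-action and $\mu_2(x,1)=x$ by fiat.

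\textbf{Main obstacle.} The technical heart is step (b)/(c): a careful Gromov-type compactness and gluing argument showing that the \emph{only} codimension-one degenerations of $\cM^\phi(x,y;C_1,\dots,C_n)$ are the three types above — in particular that marked-point collisions are governed by the codimension-$2$ loci $C_{i,i+1}$ (so they contribute the "multiply adjacent algebra elements" term and then vanish because $X^2=0$), and that boundary degenerations of the marked-point constraints against the boundary of the disk do not introduce extra terms. This requires knowing that, for small enough perturbations $A_i$ (hypothesis \ref{item:close-1}) and a generic almost complex structure (hypothesis (M-1)), the evaluation maps to $C_1, C_{1,1}$ are transverse and the relevant moduli spaces of constrained disks are cut out transversally in all the boundary strata. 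I would cite Seidel~\cite[Section 8l]{Seidel-top-book} or Perutz~\cite[Section 3.9]{Perutz-top-matching2} for the general framework of the quantum cap product, and reduce to the single-element case as the excerpt indicates, where the combinatorics of "ordered marked points splitting as first-$j$/last-$(n-j)$" is transparent and the only algebra input is $X^2=0$.
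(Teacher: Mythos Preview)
Your overall approach---analyze the boundary of the one-dimensional moduli space $\cM^\phi(x,y;C_1,\dots,C_n)$ for $\mu(\phi)=2$---is the same as the paper's. The disk-breaking ends (your case (a)) are handled correctly, and your case (c) is not really a separate phenomenon: a marked point escaping to $\pm\infty$ is already part of the Gromov compactification into broken strips, so it is subsumed by (a).

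There is, however, a genuine gap in your treatment of the collision ends. You write that ``because $X\cdot X=0$, \dots\ these collision ends must cancel among themselves,'' but this is backwards. The algebraic fact $X^2=0$ tells you that the \emph{second} sum in the $\Ainf$ relation vanishes; it does not, by itself, force the geometric collision ends to cancel. What you must actually prove is that the collision ends contribute zero mod $2$ for a geometric reason, and \emph{then} the remaining ends---the broken-curve terms---sum to zero, which is the $\Ainf$ relation. The paper supplies the missing mechanism: when $t_m$ and $t_{m+1}$ collide at a point $v\in C_{m,m+1}$ with coordinates $v_k\in A_m\cap\alpha_k$ and $v_\ell\in A_{m+1}\cap\alpha_\ell$, there is a nearby boundary point $v'$ with the roles swapped, $v'_k\in A_{m+1}\cap\alpha_k$ and $v'_\ell\in A_m\cap\alpha_\ell$. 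This pairing cancels the collision contributions in pairs, and it is precisely here (as well as in re-indexing the constraints on the second level of a broken curve from $C_{j+1},\dots,C_n$ back to $C_1,\dots,C_{n-j}$) that condition~\ref{item:close-1} is used---not in your case (c).
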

\begin{proof}
  Consider the boundary of the moduli space
  \[
    \bigcup_{\substack{\phi\in\pi_2(x,y)\\\mu(\phi)=2,\ n_z(\phi)=0}}
    \cM^\phi(x,y;C_1,\dots,C_n).
  \]
  This moduli space has two kinds of boundary points: points in
  \[
    \bigcup_{w\in T_\alpha\cap T_\beta}
    \bigcup_{\substack{\phi_1\in\pi_2(x,w),\phi_2\in\pi_2(w,y)\\\mu(\phi_i)=1,\ n_z(\phi_i)=0}}
    \cM^{\phi_1}(x,w;C_1,\dots,C_k)\times \cM^{\phi_2}(w,y;C_{k+1},\dots,C_n)
  \]
  and points in
  \[
    \bigcup_{\substack{\phi\in\pi_2(x,y)\\\mu(\phi)=2,\ n_z(\phi)=0}}
    \cM^\phi(x,y;C_1,\dots,C_{m,m+1},\dots,C_n).
  \]
  By Condition~\ref{item:close-1}, points of the first kind correspond
  to the term
  \[
    \mu_{n-k+2}(\mu_{k+1}(x,X,\dots,X),X,\dots,X)
  \]
  in the $\Ainf$ relation. Points in the second kind of terms come in pairs: An
  element $u\in\cM^\phi(x,y;C_1,\dots,C_{m,m+1},\dots,C_n)$ with
  $u(1,t_m)=v\in C_{m,m+1}$ with $v_k\in A_m\cap \alpha_k$ and
  $v_\ell \in A_{m+1}\cap \alpha_\ell$ is paired with a nearby
  $u'\in \cM^\phi(x,y;C_1,\dots,C_{m,m+1},\dots,C_n)$ with
  $u'(1,t_m)=v'\in C_{m,m+1}$ with $v'_k\in A_{m+1}\cap \alpha_k$ and
  $v'_\ell \in A_{m}\cap \alpha_\ell$, using the condition \ref{item:close-1} on
  these types of moduli spaces.
  This proves the result.
\end{proof}

We will show next that the counts of the moduli spaces
$\cM^\phi(x,y;C_{i_1},\dots,C_{i_k})$ are completely determined by the
moduli spaces $\cM^\phi(x,y)$ and the homotopy classes $\phi$. (A key
point is that the sets $C_i\subset T_\alpha$ have codimension $1$.)
As a first step, given a curve $u\in\cM^\phi(x,y)$, let $N_k(u)$ be
the number of tuples $t_1<t_2<\cdots<t_k$ so that $u(1,t_i)\in C_i$
or, equivalently, so that $u(1,t_i)\cap A_i\neq\emptyset$. Then
\[
  \mu_{1+n}(x,X,\dots,X)= \sum_{y\in T_\alpha\cap T_\beta}
  \sum_{\substack{\phi\in\pi_2(x,y)\\\mu(\phi)=1,\ n_z(\phi)=0}}
  \sum_{u\in\cM^\phi(x,y)} N_n(u)y.
\]

Recall that given integers $m,n$ with $n\geq 0$ there is an
integer ${m\choose n}=m(m-1)\cdots(m-n+1)/n!\in\ZZ$, which reduces to
an element ${m\choose n}\in\FF_2$.
\begin{lemma}\label{lem:HF-ainf-formula}
  The $\Ainf$ operation $\mu_{1+n}$ from Formula~\eqref{eq:Ainf-holo}
  is given by
  \[
    \mu_{1+n}(x,X,\dots,X)=\sum_{y\in T_\alpha\cap T_\beta}
    \sum_{\substack{\phi\in\pi_2(x,y)\\\mu(\phi)=1,\ n_z(\phi)=0}}
    {\zeta\cdot\bdy_\alpha D(\phi)\choose n}\bigl(\#\cM^\phi(x,y)\bigr)y.
  \]
  where $\zeta\cdot\bdy_\alpha D(\phi)$ denotes the algebraic
  intersection number of $\zeta$ with the part of $\bdy D(\phi)$ lying
  in $\alphas$.
\end{lemma}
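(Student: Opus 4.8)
The plan is to reduce the statement to a combinatorial count, using exactly the codimension-$1$ observation made just before the lemma. Fix $y$ and a homotopy class $\phi$ with $\mu(\phi)=1$ and $n_z(\phi)=0$, and set $m\defeq\zeta\cdot\bdy_\alpha D(\phi)$. Forgetting the marked points defines a map $\cM^\phi(x,y;C_1,\dots,C_n)\to\cM^\phi(x,y)$; since $\mu(\phi)=1$ and each $C_i$ is cut out transversely of codimension $1$, both sides are compact $0$-manifolds, so $\#\cM^\phi(x,y;C_1,\dots,C_n)=\sum_{[u]\in\cM^\phi(x,y)}N_u$, where $N_u$ is the number of tuples $t_1<\cdots<t_n$ with $u(1,t_j)\in C_j$. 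It therefore suffices to show $N_u\equiv\binom{m}{n}\pmod 2$ for every such $u$; summing over $[u]$ and then over $y$ and $\phi$ gives the asserted formula.

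To compute $N_u$, examine the path $\gamma=u|_{\{1\}\times\RR}$ in $T_\alpha$ together with its crossings with $\zeta=A_1$ and with the pushoffs $A_2,A_3,\dots$. By Condition~\ref{item:close-1} — i.e.\ because the $A_i$ are small relative to the finitely many elements of $\cM^\phi(x,y)$ — the crossings of $\gamma$ with the $C_i$ occur in tight clusters, one near each crossing of $\gamma$ with $\zeta$; moreover the orientation/coorientation conventions used to build the $A_i$ force that, within a cluster coming from a $\zeta$-crossing of one sign, $\gamma$ meets $A_1,A_2,A_3,\dots$ in increasing index order as the $\RR$-coordinate grows, while for a $\zeta$-crossing of the other sign it meets them in decreasing index order. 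Since the label $j$ of a marked point must land on an $A_j$-crossing and the marked points must appear with $t_1<\cdots<t_n$, a valid tuple is the same thing as an assignment of the ordered labels $1,\dots,n$ to clusters, weakly monotone in the order of the clusters along $\gamma$, subject to the rule that a cluster receiving two or more (necessarily consecutive) labels must be one of the ``increasing'' sign, while a cluster receiving $0$ or $1$ labels is unconstrained.

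Encoding block sizes by a formal variable $z$, an ``increasing''-sign cluster contributes the factor $\sum_{k\ge 0}z^k=(1-z)^{-1}$ and a ``decreasing''-sign cluster contributes $1+z$, so if $M$ and $P$ denote the numbers of $\zeta$-crossings of the two signs then $N_u=[z^n](1-z)^{-M}(1+z)^P$, with $m=P-M$. Over $\FF_2$ one has $1-z=1+z$, hence $(1-z)^{-M}(1+z)^P=(1+z)^{P-M}=(1+z)^m$ in $\FF_2[[z]]$, so $N_u\equiv[z^n](1+z)^m=\binom{m}{n}\pmod 2$. This is independent of $u$, so $\#\cM^\phi(x,y;C_1,\dots,C_n)\equiv\binom{m}{n}\,\#\cM^\phi(x,y)$, and the lemma follows.

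I expect the only real work to be in the second paragraph: making precise the ``clustering'' picture — so that, for small enough $A_i$, the crossing pattern of every boundary path occurring in $\cM^\phi(x,y)$ (including paths that cross $\zeta$ and double back, which simply contribute a pair of oppositely-signed clusters) is faithfully modeled by the combinatorics above — and pinning down the coorientation convention so that the ``increasing''-index sign is exactly the one counted positively in $\zeta\cdot\bdy_\alpha D(\phi)$. Once that is in place (it is the same codimension-$1$ bookkeeping already used to make sense of the $\mu_{1+n}$), the generating-function collapse over $\FF_2$ is immediate; and the special values ${m\choose 0}=1$, ${m\choose 1}=m$ recover the already-noted facts that $\mu_1$ is the differential and $\mu_2$ is the $H_1$-action associated to $\zeta$, which is a useful consistency check.
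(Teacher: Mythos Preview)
Your proposal is correct and follows essentially the same approach as the paper: reduce to a per-curve count $N_u$, interpret it as choosing which $\zeta$-crossing each label lands in with the appropriate repetition constraint, and collapse the generating function $(1+z)^P(1-z)^{-M}$ to $(1+z)^{P-M}$ over $\FF_2$. The one point you leave as a to-do—pinning down which sign allows repetition—is exactly what the paper resolves in one line: since $A_{i+1}$ is chosen as a \emph{negative} pushoff of $A_i$, positive crossings of $\bdy_\alpha D(\phi)$ with $\zeta$ are the ones that cannot be repeated, giving $m=a-b$ directly.
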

\begin{proof}
  Consider a holomorphic curve $u\in\cM^\phi(x,y)$ so that
    $(u|_{\{1\}\times\RR})^{-1}(C_1)$
  consists of $a+b$ points, $a$ of which are positive and $b$ of which
  are negative. (In other words, the boundary of $u$, viewed as a
  smooth 1-chain in $\Sigma$, intersects $\zeta$ $a$ times positively
  and $b$ times negatively.) We claim that
  $N_{n}(u)\equiv {a-b\choose n}\pmod 2$. In particular, this implies that
  $N_n(u)$ depends only on the intersection number $a-b$ of
  $\bdy_\alpha D(\phi)$ and $\zeta$:
  \[
    N_n(u)\equiv {a-b\choose n}= {\zeta\cdot\bdy_\alpha D(\phi)\choose n}\pmod 2.
  \]
  This, then, will immediately imply the result.
  
  To see that $N_{n}(u)\equiv {a-b\choose n}\pmod 2$, suppose that
  \[
    (u|_{\{1\}\times\RR})^{-1}(C_1)=\{(1,t_1),(1,t_2),\dots,(1,t_\ell)\}
  \]
  where $t_1<\cdots<t_\ell$. (In the notation of the previous
  paragraph, $\ell=a+b$.) Let $s_i\in\{\pm 1\}$ be the sign of
  $(1,t_i)\in (u|_{\{1\}\times\RR})^{-1}(C_1)$, with respect to the coorientation
  of $\zeta$ and the orientation of $\{1\}\times\RR$.  The inverse
  function theorem implies that there are small, positive real numbers
  $\epsilon_{2,1},\dots,\epsilon_{n,\ell}$ so that
  \begin{align*}
    (u|_{\{1\}\times\RR})^{-1}(C_2)&=\bigl\{(1,t_1-s_1\epsilon_{2,1}),(1,t_2-s_2\epsilon_{2,2}),\dots,(1,t_\ell-s_\ell\epsilon_{2,\ell})\bigr\}\\
    (u|_{\{1\}\times\RR})^{-1}(C_3)&=\bigl\{\bigl(1,t_1-s_1(\epsilon_{2,1}+\epsilon_{3,1})\bigr),\bigl(1,t_2-s_2(\epsilon_{2,2}+\epsilon_{3,2})\bigr),\dots,\bigl(1,t_\ell-s_\ell(\epsilon_{2,\ell}+\epsilon_{3,\ell})\bigr)\bigr\}\\
    (u|_{\{1\}\times\RR})^{-1}(C_4)&=\bigl\{\bigl(1,t_1-s_1(\epsilon_{2,1}+\epsilon_{3,1}+\epsilon_{4,1})\bigr),\dots,\bigl(1,t_\ell-s_\ell(\epsilon_{2,\ell}+\epsilon_{3,\ell}+\epsilon_{4,\ell})\bigr)\bigr\}
  \end{align*}
  and so on. In particular, suppose $j<k$. The preimage $(1,t_i)$ of
  $C_1$ gives preimages of $C_j$ and $C_k$ that occur in order if
  $s_i<0$ and out of order if $s_i>0$. It follows that $N_n(u)$ is the
  number $N_n(a,b)$ of ways of choosing $n$ points among the $a+b$ intersection
  points, possibly with repetitions, subject to the restriction that
  positive intersection points cannot be repeated
  
  It remains to prove that $N_{n}(a,b)\equiv {a-b\choose n}\pmod
  2$. The number $N_n(a,b)$ is the coefficient of $s^{n}$
  in $(1+s)^a(1+s+s^2+\cdots)^b$: the $a$ $(1+s)$ factors represent
  the $a$ positive intersections, which can be chosen $0$ or $1$
  times, and the the $b$ $(1+s+s^2+\cdots)$ factors represent the $b$
  negative intersections, which can be chosen any number of times.
  Since $1+s+s^2+\cdots\equiv 1-s+s^2-\cdots\equiv(1+s)^{-1}\pmod 2$, this
  equals
  $(1+s)^a(1+s)^{-b}=(1+s)^{a-b}=\sum_{n\in\NN} {a-b\choose n}s^n$,
  and so $N_{n}(a,b)\equiv{a-b\choose n}\pmod 2$, as desired.
  %
  %
\end{proof}

\begin{theorem}\label{thm:CFa-Ainf-well-defd}
  Up to quasi-isomorphism, the $\Ainf$-modules $\CFa(\HD)$ and
  $\HFa(\HD)$ over the ring $\FF_2[X]/(X^2)$ are independent of the multi-curve
  $\zeta$ representing $[\zeta]\in H_1(Y)/\tors$, the perturbations,
  the Heegaard diagram, and the almost complex structure in their
  construction.
\end{theorem}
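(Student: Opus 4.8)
The plan is to run the standard invariance schema for Heegaard Floer homology, promoting each of the usual chain-level maps to an $\Ainf$ morphism by the same point-constraint recipe that defines $\mu_{1+n}$ in \eqref{eq:Ainf-holo}. Two preliminary reductions simplify the task. First, by Proposition~\ref{prop:hpt} and Proposition~\ref{prop:qi-is-he}, quasi-isomorphism and homotopy equivalence agree for $\Ainf$ modules over an $\FF_2$-algebra and transfer along homotopy equivalences of underlying complexes, so it suffices to prove the assertion for $\CFa(\HD)$, the statement for $\HFa(\HD)$ following by homological perturbation. Second, Lemma~\ref{lem:HF-ainf-formula} expresses $\mu_{1+n}$ purely in terms of the additive assignment $A_\zeta(\phi)\defeq\zeta\cdot\bdy_\alpha D(\phi)$ and the unsigned disk counts $\#\cM^\phi(x,y)$ already appearing in $\mu_1$. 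Hence, for a fixed Heegaard diagram and almost complex structure, the $\Ainf$ structure is literally independent of the auxiliary perturbations $A_i$ (any sufficiently small family satisfying Condition~\ref{item:close-1} yields the identical operations), and its only remaining dependence is on the almost complex structure (through the disk counts), the Heegaard diagram, and the multicurve $\zeta$. So it remains to prove invariance under (i) change of almost complex structure, (ii) change of Heegaard diagram, and (iii) replacing $\zeta$ by another multicurve representing the same class $[\zeta]\in H_1(Y)/\tors$.

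For (i), I would take a generic path $\{J_t\}$ of almost complex structures and define $\Phi_{1+n}\colon\CFa(\HD,J_0)\otimes R^{\otimes n}\to\CFa(\HD,J_1)$, where $R=\FF_2[X]/(X^2)$, by counting, for $\mu(\phi)=0$, the rigid points of the parametrized moduli space of disks over $t\in[0,1]$ carrying $n$ ordered marked boundary points on $T_\alpha$ hitting $C_1,\dots,C_n$ in order; exactly as in Lemma~\ref{lem:HF-ainf-formula} this reproduces $\sum_{\mu(\phi)=0}\binom{\zeta\cdot\bdy_\alpha D(\phi)}{n}(\#\cM^\phi_{\{J_t\}}(x,y))y$. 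The boundary of the corresponding one-dimensional parametrized moduli spaces consists of broken configurations at $t\in\{0,1\}$ (yielding the terms $\Phi\circ\mu^\zeta$ and $\mu^\zeta\circ\Phi$ of the $\Ainf$ morphism relation) together with collisions of two adjacent marked points on a single $C_{m,m+1}$, which cancel in pairs as in Lemma~\ref{HF-ainf-well-defd}; so $\Phi=\{\Phi_{1+n}\}$ is an $\Ainf$ morphism, and it is a quasi-isomorphism since $\Phi_1$ is the usual continuation map. For (ii), I would handle isotopies of $\alphas$ and $\betas$, handleslides, and (de)stabilizations in the standard way, arranging in each case that $\zeta$ is supported away from the finite collection of regions where the attaching curves or the surface are modified — so that $A_\zeta$ and the sets $A_i$ are unchanged — and upgrading the usual isotopy/triangle/stabilization maps to $\Ainf$ morphisms by the identical point-constraint-plus-binomial-weight recipe, the $\Ainf$ morphism and $\Ainf$ homotopy identities again following from degenerations of the relevant index-$0$ and index-$1$ (parametrized) moduli spaces. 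Composing the resulting $\Ainf$ homotopy equivalences shows that $(\CFa(\HD),\mu^\zeta)$ is $\Ainf$ quasi-isomorphic for all $J$ and $\HD$, with $\zeta$ held fixed.

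For (iii), suppose $[\zeta]=[\zeta']$ in $H_1(Y)/\tors$. On every periodic class $\phi\in\pi_2(x,x)$ the quantity $\zeta\cdot\bdy_\alpha D(\phi)$ factors through the (torsion-insensitive) pairing of $[\zeta]$ with $H_2(Y)$ (compare \cite[Section 4.2.5]{OSz-hf-3manifolds}), so $A_\zeta$ and $A_{\zeta'}$ agree there; working one $\SpinC$ structure at a time and fixing a base generator, it follows that there is a function $c\colon T_\alpha\cap T_\beta\to\ZZ$ with $A_\zeta(\phi)-A_{\zeta'}(\phi)=c(x)-c(y)$ for every $\phi\in\pi_2(x,y)$. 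I would then define a strictly unital $\Ainf$ morphism $\Psi\colon(\CFa(\HD),\mu^\zeta)\to(\CFa(\HD),\mu^{\zeta'})$ by the diagonal formula $\Psi_{1+m}(x,X,\dots,X)=\binom{c(x)}{m}x$. Packaging all operations on a generator $x$ into generating functions $\sum_m(\,\cdot\,)_{1+m}(x,X^{\otimes m})s^m$ — for which, because $X^2=0$ and everything is strictly unital, composition of $\Ainf$ morphisms and modules becomes composition of the associated $\FF_2[[s]]$-linear maps, i.e. the unrolling picture of \Section{unrolled} with $s$ in place of $Y$ — the $\Ainf$ morphism equation for $\Psi$ reduces to the identity $(1+s)^{A_\zeta(\phi)+c(y)}=(1+s)^{c(x)+A_{\zeta'}(\phi)}$ in $\FF_2[[s]]$, which holds by the displayed relation together with $(1+s)^{a+b}=(1+s)^a(1+s)^b$ for all $a,b\in\ZZ$ (using $(1+s)^{-1}=1+s+s^2+\cdots$ in characteristic $2$). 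Since $\Psi_1=\Id$, $\Psi$ is an $\Ainf$ isomorphism, completing the argument.

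The main obstacle will be step (ii), and within it handleslide invariance: as usual in Heegaard Floer theory, one must position $\zeta$ compatibly with a triple Heegaard diagram, check that the triangle counts with marked points assemble into honest $\Ainf$ morphisms, and verify the coherence — the higher $\Ainf$ homotopies — relating the various change-of-diagram maps. The holomorphic input is, however, entirely parallel to the gluing and degeneration analysis already carried out in Lemmas~\ref{HF-ainf-well-defd} and~\ref{lem:HF-ainf-formula}, so no genuinely new analytic difficulty should arise; the work is purely organizational.
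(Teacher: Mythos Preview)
Your proposal is correct and is precisely the ``simple adaptation of the usual invariance proof'' that the paper invokes but leaves to the reader; your handling of (iii) via the diagonal morphism $\Psi_{1+m}(x,X,\dots,X)=\binom{c(x)}{m}x$ and the generating-function identity $(1+s)^{A_{\zeta'}(\phi)+c(x)}=(1+s)^{A_\zeta(\phi)+c(y)}$ is clean and correct.

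It is worth noting, though, that the paper also points to a second route which bypasses all of your holomorphic-curve work in (i)--(ii): invariance of the twisted complex $\tCFa(Y;\FF_2[t^{-1},t]_\zeta)$ is already established, and Lemma~\ref{lem:t-to-X-behaves} (together with Proposition~\ref{prop:twisted-to-X-Hasse}) shows that the passage $C_*\mapsto C_*^{t=1}$ from free $\FF_2[t^{-1},t]$-complexes to $\Ainf$ modules over $\FF_2[X]/(X^2)$ carries quasi-isomorphisms to $\Ainf$ quasi-isomorphisms. This reduces the theorem to a purely algebraic statement plus known invariance, with no new moduli-space analysis. Your step (iii) is in fact the shadow of this: the map $x\mapsto t^{c(x)}x$ is the change-of-basis isomorphism on the twisted complex intertwining $A_\zeta$ and $A_{\zeta'}$, and your $\Psi$ is exactly its image under the Hasse-derivative construction of Definition~\ref{def:t-to-X}. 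The direct approach you take is more self-contained and makes the geometric content visible; the twisted-coefficient shortcut is more economical and explains why the binomial formulas appear.
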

\begin{proof}
  This is a simple adaptation of the usual invariance proof for
  Heegaard Floer homology~\cite{OSz-hf-3manifolds}, and is left to the
  reader. The result also follows from invariance of
  $\tHFa(Y;\ZZ[t^{-1},t]_\zeta)$ and Theorem~\ref{thm:twist-to-X}
  below (whose proof does not depend on this theorem).
\end{proof}

\begin{remark}
  We have suppressed $\SpinC$-structures from the discussion
  above. All of the complexes decompose as direct sums over the
  $\SpinC$-structures on $Y$, and the $\Ainf$ action respects this
  decomposition.
\end{remark}

\subsection{Algebra: twisted coefficients and Koszul duality}\label{sec:twist-to-Ainf}
Michael Hutchings pointed out to the first author around 2004 that one
can recover the $H_1/\tors$-action on $\HFa(Y)$ from $\tCFa(Y)$. As he
may also have explained, this extends to the $\Ainf$-module
structure. In this section, we give two formulations of this
construction.

To motivate the first formulation, consider the relations between
Equations~(\ref{eq:t-bdy}),~(\ref{eq:un-t-bdy}),
and~(\ref{eq:H1-act-1}): the operation $\bdy$ is obtained from $\tbdy$
by setting $t=1$, while the operation $\zeta\cdot$ is obtained from
$\tbdy$ by differentiating once with respect to $t$ and then setting
$t=1$. (Compare the operators $\Phi_w$ and
$\Psi_z$~\cite{Sar-action,Zemke-hf-bp-moving}.) Of course, the second derivative
vanishes in characteristic $2$, but the right generalization of this
relation was introduced by Hasse and
Schmidt~\cite{SH-other-derivatives}:

\begin{definition}
  Let $k$ be a commutative ring with unit and let $k[t^{-1},t]$ be
  the ring of Laurent polynomials over $k$. Given $m,n\in\ZZ$, $n\geq 0$,
  the element ${m\choose n}\in\ZZ$ induces an element
  ${m\choose n}={m\choose n}1\in k$. The \emph{$n\th$ Hasse
    derivative} $D^n\co k[t^{-1},t]\to k[t^{-1},t]$ is the $k$-linear
  map which satisfies
  \[
    D^n(t^m)={m\choose n}t^{m-n}.
  \]
\end{definition}
Over a field of characteristic $0$,
\[
  D^n = \frac{1}{n!}\frac{d^n}{dt^n}.
\]
\begin{proposition}\label{prop:Hasse-Leibniz}
  The Hasse derivatives satisfy the Leibniz rule
  \[
    D^n\bigl(fg\bigr)=\sum_{i=0}^n \bigl(D^i(f)\bigr)\bigl(D^{n-i}(g)\bigr).
  \]
  Further, for any Laurent polynomial $p(t)$ and any $a\neq 0$, if
  $(D^ip(t))|_{t=a}=0$ for all $i$ then $p(t)=0$.
\end{proposition}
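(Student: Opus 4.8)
The plan is to verify the Leibniz rule first on monomials and extend by linearity, then deduce the vanishing statement from it. For the Leibniz rule, by $k$-bilinearity of both sides it suffices to check the identity when $f = t^a$ and $g = t^b$. In that case the left side is $D^n(t^{a+b}) = \binom{a+b}{n} t^{a+b-n}$, while the right side is $\sum_{i=0}^n \binom{a}{i}\binom{b}{n-i} t^{a-i}t^{b-(n-i)} = \bigl(\sum_{i=0}^n \binom{a}{i}\binom{b}{n-i}\bigr) t^{a+b-n}$, so the claim reduces to the Vandermonde identity $\binom{a+b}{n} = \sum_{i=0}^n \binom{a}{i}\binom{b}{n-i}$ in $\ZZ$ (hence in $k$). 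This is standard; one clean way to see it for all integers $a,b$ (not just nonnegative ones) is to compare coefficients of $s^n$ in the formal power series identity $(1+s)^a(1+s)^b = (1+s)^{a+b}$, interpreting $(1+s)^m = \sum_{j\geq 0}\binom{m}{j}s^j$ as a formal series for negative $m$ as well---exactly the kind of manipulation already used in the proof of Lemma~\ref{lem:HF-ainf-formula}. Since everything lands in $\ZZ$ before mapping to $k$, the rule holds over any commutative ring $k$.

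For the second statement, suppose $p(t) = \sum_{m} c_m t^m$ is a Laurent polynomial (finitely many nonzero $c_m\in k$) with $(D^i p)(a) = 0$ for all $i\geq 0$ and some $a\neq 0$. Write $p$ in terms of the shifted variable: set $q(u) = p(a+u) \in k[u^{-1}, u]]$, or more carefully expand $p(a+u)$ as an element of $k[u]$ after clearing the (invertible) factor $a^N$ from the lowest-degree term, so $p(a+u) = \sum_{j} b_j u^j$ is an honest polynomial in $u$ times a unit. The key point is that $(D^i p)(a)$ computes exactly the coefficient $b_i$: by the Leibniz rule and induction, $D^i(t^m)$ evaluated by the binomial-theorem expansion $t^m = ((t-a)+a)^m$ gives $D^i(t^m)|_{t=a}$ as the coefficient of $(t-a)^i$ in the Taylor-type expansion of $t^m$ around $a$ (this is the defining property of the Hasse derivative as the "divided-power" Taylor coefficient operator). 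Hence all coefficients $b_i$ of $p(a+u)$ vanish, so $p(a+u) = 0$ identically, so $p(t) = 0$.

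The main obstacle---really the only subtle point---is being careful about \emph{negative} exponents, since $p$ is a Laurent polynomial rather than a polynomial: the expansion of $t^m = (a+u)^m$ around $u=0$ is a genuine polynomial in $u$ only when $m\geq 0$, and for $m<0$ one should instead factor out the unit $a^m$ and expand $(1+u/a)^m$ as a formal power series in $u$. I would handle this by multiplying $p$ through by $t^{M}$ for $M$ large enough to make $t^M p(t)$ an honest polynomial; since $D^i(t^M p) = \sum_j \binom{M}{i-j} t^{M-?}\cdots$—more simply, since $t^M$ is a unit near $t=a$ and $D^i$ interacts with multiplication by $t^M$ via the Leibniz rule, the hypothesis $(D^i p)(a)=0$ for all $i$ is equivalent to $(D^i(t^M p))(a) = 0$ for all $i$ (the change of variables is triangular with unit diagonal entries $\binom{M}{0}a^M = a^M$). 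Then apply the polynomial case: a polynomial all of whose Hasse-derivative values at $a\neq 0$ vanish has all Taylor coefficients at $a$ equal to zero, hence is the zero polynomial, hence $p=0$. No genuine difficulty remains beyond this bookkeeping.
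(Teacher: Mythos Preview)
Your argument is correct and largely parallels the paper's. For the Leibniz rule, both you and the paper (via its reference to Conrad) reduce to monomials and invoke Vandermonde. For the second statement, both you and the paper reduce from Laurent polynomials to ordinary polynomials by multiplying through by $t^N$ and using the Leibniz rule to transfer the hypothesis; note that only the forward direction of your ``equivalence'' is needed (and is immediate from Leibniz without any invertibility assumption on $a^M$), so your triangular-system remark is a slight overclaim but harmless.

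The one genuine difference is the endgame for polynomials. You invoke the Hasse--Taylor interpretation: the $(D^ip)(a)$ are exactly the coefficients of $p(a+u)$ in $u$, so if they all vanish then $p(a+u)=0$ identically, hence $p=0$. The paper instead takes a one-line shortcut: if $b_nt^n$ is the top-degree term of $p$ then $D^n(p)$ is the constant $b_n$, so $(D^np)(a)=b_n\neq 0$. Your route is more conceptual (and is essentially what the paper later uses in the proof of Theorem~\ref{thm:Koszul}); the paper's is shorter and avoids having to justify the Taylor formula. Either works.
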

\begin{proof}
  For a proof of the first statement, see, for example,
  Conrad~\cite[Section 4]{Conrad-other-digit}. For the second, suppose
  $(D^ip(t))|_{t=a}=0$ for all $i$. By the first statement, we also
  have $(D^i(t^Np(t)))|_{t=a}=0$ for all $i$, so we may assume that
  $p(t)\in k[t]$. If the highest degree term in $p(t)$ is $b_nt^n$, $b_n\neq
  0$, then $D^n(p(t))|_{t=a}=b_n$.
\end{proof}
See Jeong~\cite{Jeong-other-Hasse} for a recent, more thorough
discussion of what is known about Hasse derivatives, and further
references.

\begin{corollary}\label{cor:Hasse-mx-Leibniz}
  Let $A$ and $B$ be $m\times n$ and $n\times p$ matrices over
  $k[t^{-1},t]$, respectively. Define $D^j(A)$ to be the result of taking the $j\th$
  Hasse derivative of each entry of $A$. Then,
  \[
    D^n(AB)=\sum_{i=0}^n \bigl(D^i(A)\bigr)\bigl(D^{n-i}(B)\bigr).
  \]
\end{corollary}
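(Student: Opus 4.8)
The plan is to reduce everything to the scalar Leibniz rule of Proposition~\ref{prop:Hasse-Leibniz} by working one matrix entry at a time. Write $A=(A_{ab})$ with $1\le a\le m$, $1\le b\le n$, and $B=(B_{bc})$ with $1\le b\le n$, $1\le c\le p$, so that the $(a,c)$ entry of the product is the finite sum $(AB)_{ac}=\sum_{b=1}^{n}A_{ab}B_{bc}$ of elements of $k[t^{-1},t]$. (Here, as in the statement, the letter $n$ is doing double duty as the inner matrix dimension and as the order of the Hasse derivative; since all sums in sight are finite this causes no trouble, but to keep the two roles visually separate I would write $D^{N}$ for the derivative of order $N$ and set $N=n$ only at the end.)

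First I would use that $D^{N}$ is by definition $k$-linear, hence additive, to move it past the sum over $b$: $D^{N}\bigl((AB)_{ac}\bigr)=\sum_{b=1}^{n}D^{N}(A_{ab}B_{bc})$. Next I would apply the scalar Leibniz rule to each summand, obtaining $D^{N}(A_{ab}B_{bc})=\sum_{i=0}^{N}\bigl(D^{i}(A_{ab})\bigr)\bigl(D^{N-i}(B_{bc})\bigr)$. Substituting this in and interchanging the two finite sums (over $b$ and over $i$) gives $D^{N}\bigl((AB)_{ac}\bigr)=\sum_{i=0}^{N}\sum_{b=1}^{n}\bigl(D^{i}(A_{ab})\bigr)\bigl(D^{N-i}(B_{bc})\bigr)$. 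The inner sum over $b$ is precisely the $(a,c)$ entry of the matrix product $\bigl(D^{i}(A)\bigr)\bigl(D^{N-i}(B)\bigr)$, since $D^{i}(A)$ has entries $D^{i}(A_{ab})$ and $D^{N-i}(B)$ has entries $D^{N-i}(B_{bc})$ by definition of $D^{i}$ applied entrywise. As $a$ and $c$ were arbitrary, this establishes $D^{N}(AB)=\sum_{i=0}^{N}\bigl(D^{i}(A)\bigr)\bigl(D^{N-i}(B)\bigr)$, which is the assertion upon setting $N=n$.

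There is essentially no obstacle here: the corollary is a purely formal consequence of $k$-linearity of the Hasse derivatives together with the scalar Leibniz rule already proved, and the argument is a one-line manipulation carried out inside each entry. The only point requiring a modicum of care is bookkeeping the indices and confirming that "take the $j\th$ Hasse derivative of each entry" is compatible with matrix multiplication in the way just described; I would state this compatibility explicitly so that the interchange of sums is transparent.
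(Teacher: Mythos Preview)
Your proof is correct and is exactly the approach the paper takes: the paper's proof is the single sentence ``This is immediate from Proposition~\ref{prop:Hasse-Leibniz} and the formula for matrix multiplication,'' and you have simply spelled out that immediacy entry by entry. Your remark about the double use of $n$ is a fair observation but causes no mathematical issue.
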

\begin{proof}
  This is immediate from Proposition~\ref{prop:Hasse-Leibniz} and the
  formula for matrix multiplication.
\end{proof}

\begin{definition}\label{def:t-to-X}
  Let $(C_*,\bdy)$ be a (graded or ungraded) freely generated chain complex
  over $\FF_2[t^{-1},t]$, with a choice of distinguished basis. View $\FF_2$ as an $\FF_2[t^{-1},t]$-algebra via the
  homomorphism sending $t\mapsto 1$ and let
  $C_*^{t=1}=C_*\otimes_{\FF_2[t^{-1},t]}\FF_2$. Define an $\Ainf$-module structure on $C_*^{t=1}$ over $\FF_2[X]/(X^2)$,
  \[
    \mu_{1+n}\co C_*^{t=1}\otimes_{\FF_2} (\FF_2[X]/(X^2))^{\otimes n} \to C_*^{t=1},
  \]
  by declaring that:
  \begin{itemize}[leftmargin=*]
  \item $\mu_1(c)$ is the differential on $C_*$, with $t$ evaluated at
    $1$, and
  \item
    $\mu_{1+n}(c,X,\dots,X)=\bigl(D^n
    \bdy(\overline{c})\bigr)|_{t=1}$. Here, $\overline{c}$ is the
    image of $c$ under the inclusion $C_*^{t=1}\into C_*$ induced by
    the inclusion $\FF_2\into \FF_2[t^{-1},t]$ as the constant polynomials.
  \end{itemize}
  We will say that $(C_*^{t=1},\{\mu_{1+n}\})$ is the \emph{$\Ainf$-module induced by $C_*$.}
\end{definition}

Note that the Hasse derivatives of $\bdy(\overline{c})$ here depend on
the choice of basis for $C_*$ over $\FF_2[t^{-1},t]$, used to
represent $\bdy(\overline{c})$ as a vector or $\bdy$ as a matrix. The
Leibniz rule (Proposition~\ref{prop:Hasse-Leibniz}) also implies a
Leibniz rule for matrix multiplication. In particular, this Hasse
derivative is respected by changing basis by a matrix over $\FF_2$
(i.e., consisting of constant polynomials), though we will not
use this fact directly.

\begin{lemma}\label{lem:t-to-X-behaves}
  Let $(C_*,\bdy)$ be a freely generated chain complex over
  $\FF_2[t^{-1},t]$, with a distinguished basis. Then the
  $\Ainf$-module induced by $C_*$ satisfies the $\Ainf$
  relations. Further, if $f\co C_*\to E_*$ is a quasi-isomorphism of
  freely generated chain complexes over $\FF_2[t^{-1},t]$ then there
  is an induced quasi-isomorphism of $\Ainf$-modules
  $F\co C_*^{t=1}\to E_*^{t=1}$.
\end{lemma}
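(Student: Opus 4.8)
The plan is to verify the $\Ainf$ relations and the functoriality statement by direct computation, packaging everything through the Hasse--Leibniz rule for matrices (Corollary~\ref{cor:Hasse-mx-Leibniz}). First I would set up notation: fix a basis for $C_*$ over $\FF_2[t^{-1},t]$, write $\bdy$ as a matrix $\bdy(t)$ over $\FF_2[t^{-1},t]$, so that the induced operations on $C_*^{t=1}$ are $\mu_1 = \bdy(1)$ and $\mu_{1+n}(\cdot,X,\dots,X) = (D^n\bdy)(1)$ for $n\geq 1$, and $\mu_{1+n}(\cdot,\dots) = 0$ whenever some input is not $X$ (strict unitality, per Convention~\ref{conv:unital}). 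Since everything is $\FF_2[X]/(X^2)$-multilinear and strictly unital, an $\Ainf$ module over $\FF_2[X]/(X^2)$ is determined by the operations $\mu_{1+n}(\cdot,X,\dots,X)$, and the full set of $\Ainf$ relations reduces to the single family of identities obtained by feeding in $n$ copies of $X$; because $X^2 = 0$ in the ring, all the ``internal multiplication'' terms $a_i a_{i+1}$ drop out, so the $\Ainf$ relation for input $(X,\dots,X)$ ($n$ times) becomes precisely $\sum_{i+j=n}\mu_{1+i}\circ(\mu_{1+j}\otimes\Id^{\otimes(n-j)}) = 0$, i.e.\ $\sum_{i+j=n}(D^i\bdy)(1)\,(D^j\bdy)(1) = 0$.

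The key step is then to recognize the left side of that identity as $\bigl(D^n(\bdy^2)\bigr)(1)$: by Corollary~\ref{cor:Hasse-mx-Leibniz} applied to $A = B = \bdy(t)$, we have $D^n(\bdy\circ\bdy) = \sum_{i=0}^n (D^i\bdy)(D^{n-i}\bdy)$, and since $\bdy^2 = 0$ as a matrix over $\FF_2[t^{-1},t]$ we get $\sum_{i+j=n}(D^i\bdy)(D^j\bdy) = 0$ identically in $t$, in particular after evaluating at $t=1$. This is exactly the required $\Ainf$ relation, so the first assertion follows. For the functoriality statement, given a chain map $f\co C_*\to E_*$ over $\FF_2[t^{-1},t]$, written as a matrix $f(t)$ with $\bdy^E f = f\bdy^C$, I would define $F\co C_*^{t=1}\to E_*^{t=1}$ by $F_1 = f(1)$ and $F_{1+n}(\cdot,X,\dots,X) = (D^n f)(1)$ for $n\geq 1$ (extended strictly unitally). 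The $\Ainf$ morphism relations, again reduced to all-$X$ inputs, read $\sum_{i+j=n}(D^i f)(1)(D^j\bdy^C)(1) + \sum_{i+j=n}(D^i\bdy^E)(1)(D^j f)(1) = 0$, which is precisely $\bigl(D^n(\bdy^E f)\bigr)(1) + \bigl(D^n(f\bdy^C)\bigr)(1) = \bigl(D^n(\bdy^E f - f\bdy^C)\bigr)(1) = 0$, using the Leibniz rule again and $\bdy^E f = f\bdy^C$ (in characteristic $2$ the sign is irrelevant). Finally, $F_1 = f(1)$ is the map induced by $f$ after setting $t=1$; since $f$ is a quasi-isomorphism and $C_*, E_*$ are free over the PID $\FF_2[t^{-1},t]$, one checks $f(1)$ is a quasi-isomorphism on $C_*^{t=1} = C_*\otimes_{\FF_2[t^{-1},t]}\FF_2$ — e.g.\ via the universal coefficient sequence (Lemma~\ref{lem:univ-coeff}) with $M = \FF_2$, the mapping cone of $f$ being acyclic and free, hence its reduction mod $(t-1)$ acyclic — so $F$ is an $\Ainf$ quasi-isomorphism.

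I do not expect a serious obstacle here; the only mild subtlety is bookkeeping the reduction of the $\Ainf$ (morphism) relations to all-$X$ inputs, i.e.\ checking carefully that strict unitality plus $X^2 = 0$ kills every term except the ``composition'' terms and that no term is double-counted — this is where I would be most careful, but it is formal. A secondary point to state cleanly is the passage from ``$f$ a quasi-isomorphism over $\FF_2[t^{-1},t]$'' to ``$f(1)$ a quasi-isomorphism'': one should note that $C_*$ and $E_*$ being free (projective) over the PID is exactly what makes the universal coefficient argument (or, equivalently, the acyclicity of the reduced mapping cone) go through, and this freeness hypothesis is already in the statement.
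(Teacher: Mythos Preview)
Your proposal is correct and follows essentially the same route as the paper: both verify the $\Ainf$ relations by applying the Hasse--Leibniz rule (Corollary~\ref{cor:Hasse-mx-Leibniz}) to $\bdy^2=0$ and evaluating at $t=1$, define $F_{1+n}$ via $(D^n f)|_{t=1}$, and deduce that $F_1$ is a quasi-isomorphism from the universal coefficient theorem (Lemma~\ref{lem:univ-coeff}) together with the five lemma. Your explicit remark that strict unitality plus $X^2=0$ reduces everything to all-$X$ inputs is a point the paper leaves implicit, and your alternative mapping-cone phrasing for the quasi-isomorphism step is equivalent to the paper's universal-coefficient-plus-five-lemma argument.
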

\begin{proof}
  This follows from Corollary~\ref{cor:Hasse-mx-Leibniz}. For the
  first statement, we need to check that
  \begin{equation}\label{eq:mu-want}
    \sum_{i+j=n} \mu_{1+i}(\mu_{1+j}(c,X,\dots,X),X,\dots,X)=0
  \end{equation}
  for all $n$ and all $c$.
  Consider the $n\th$ Hasse derivative of the matrix
  equation $\bdy^2=0$. By Corollary~\ref{cor:Hasse-mx-Leibniz}, this
  gives
  \[
    \sum_{i+j=n} D^i(\bdy)\circ D^j(\bdy)=0.
  \]
  Setting $t=1$ gives Equation~\eqref{eq:mu-want}. 

  For the second statement, define
  \[
    F_{1+n}\co C_*^{t=1}\otimes_{\FF_2} (\FF_2[X]/(X^2))^{\otimes n} \to E_*^{t=1}
  \]
  by
  \[
    F_{1+n}(c,X,\dots,X)=\bigl(D^n f(\overline{c})\bigr)|_{t=1}.
  \]
  To see that $F$ is an $\Ainf$ homomorphism, we need to check that
  \begin{equation*}
    \sum_{i+j=n} F_{1+i}(\mu_{1+j}(c,X,\dots,X),X,\dots,X)+\mu_{1+i}(F_{1+j}(c,X,\dots,X),X,\dots,X)=0.
  \end{equation*}
  This follows from the equation $f\circ\bdy+\bdy\circ f=0$ by taking
  the $n\th$ Hasse derivative and using
  Corollary~\ref{cor:Hasse-mx-Leibniz}. Now, it follows from the
  universal coefficient theorem (see Lemma~\ref{lem:univ-coeff} above
  for the ungraded case) and the 5-lemma that $F$ is a
  quasi-isomorphism: the map $F_1$ is just the map
  $f\otimes\Id\co C_*\otimes_{\FF_2[t^{-1},t]}\FF_2\to
  E_*\otimes_{\FF_2[t^{-1},t]}\FF_2$ induced by $f$, and we have 
  \[
    \xymatrix{
      0\ar[r] & H(C_*)\otimes_{\FF_2[t^{-1},t]}\FF_2\ar[d]_\cong\ar[r] &
      H(C_*^{t=1})\ar[d]_{(F_1)_*}\ar[r] &
      \Tor^1_{\FF_2[t^{-1},t]}(H(C_*),\FF_2)\ar[d]_\cong \ar[r]& 0\\
      0\ar[r] & H(E_*)\otimes_{\FF_2[t^{-1},t]}\FF_2 \ar[r]&
      H(E_*^{t=1})\ar[r] &
      \Tor^1_{\FF_2[t^{-1},t]}(H(E_*),\FF_2) \ar[r]& 0.
    }
  \]
  This proves the result.
\end{proof}

A priori, the isomorphism type of the $\Ainf$-module $C_*^{t=1}$
depends on the basis for $C_*$ we are working
with. Lemma~\ref{lem:t-to-X-behaves} implies that this dependence is
superficial:
\begin{corollary}\label{cor:basis-indep}
  Up to quasi-isomorphism, the $\Ainf$-module $C_*^{t=1}$ is
  independent of the choice of basis for $C_*$. That is, if $C_*$ is
  isomorphic to $E_*$ then $C_*^{t=1}$ is quasi-isomorphic to $E_*^{t=1}$.
\end{corollary}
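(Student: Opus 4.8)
The plan is to deduce this immediately from Lemma~\ref{lem:t-to-X-behaves}. Suppose $\varphi\co C_*\to E_*$ is an isomorphism of freely generated chain complexes over $\FF_2[t^{-1},t]$. In particular $\varphi$ is a quasi-isomorphism of freely generated chain complexes, so the second half of Lemma~\ref{lem:t-to-X-behaves} applies verbatim: it produces an induced $\Ainf$ quasi-isomorphism $\Phi\co C_*^{t=1}\to E_*^{t=1}$, with components $\Phi_{1+n}(c,X,\dots,X)=(D^n\varphi(\overline{c}))|_{t=1}$. This is exactly the assertion of the corollary.

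The only subtlety worth spelling out is the phrase ``independent of the choice of basis.'' The $\Ainf$ module $C_*^{t=1}$ of Definition~\ref{def:t-to-X} is built from the matrix of $\bdy$ in a chosen free basis, so a priori changing basis could change it. But a change of basis on $C_*$ is precisely an isomorphism $C_*\cong C_*$ of chain complexes over $\FF_2[t^{-1},t]$ (the identity map expressed in two bases), and more generally any two free chain complexes that ``are the same up to a choice of basis'' are isomorphic over $\FF_2[t^{-1},t]$. Hence the general statement --- if $C_*\cong E_*$ then $C_*^{t=1}$ is quasi-isomorphic to $E_*^{t=1}$ --- already covers the basis-change case. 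There is essentially no obstacle here: the work was done in Lemma~\ref{lem:t-to-X-behaves}, and this corollary is just the observation that an isomorphism is a special case of a quasi-isomorphism.

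One small point to record for cleanliness: although Lemma~\ref{lem:t-to-X-behaves} is stated for quasi-isomorphisms, when $\varphi$ is an honest isomorphism one can also simply transport the $\Ainf$ structure along $\varphi$ directly, which shows $C_*^{t=1}$ and $E_*^{t=1}$ are in fact \emph{isomorphic} as $\Ainf$ modules, not merely quasi-isomorphic; but quasi-isomorphism is all we need downstream, so I would just cite Lemma~\ref{lem:t-to-X-behaves} and stop.
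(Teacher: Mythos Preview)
Your proposal is correct and matches the paper's approach exactly: the corollary is stated as an immediate consequence of Lemma~\ref{lem:t-to-X-behaves}, with no separate proof given. The paper even makes the same parenthetical observation you do, that inspecting the argument shows the two $\Ainf$ modules are in fact isomorphic, not merely quasi-isomorphic.
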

(In fact, inspecting the proof a little more shows that the
corollary holds up to isomorphism of $\Ainf$-modules, not just
quasi-isomorphism.)

Let $R=\FF_2[t^{-1},t]$. An element $\zeta\in H_1(Y)/\tors$ induces a homomorphism $\FF_2[H_2(Y)]\to R$, making $R$ into an $\FF_2[H_2(Y)]$-algebra. When thinking of $R$ as an $\FF_2[H_2(Y)]$-algebra, we will denote it $R_\zeta$. When we only want to think of $R$ as a ring, we will drop the subscript $\zeta$.

\begin{proposition}\label{prop:twisted-to-X-Hasse}
  Let $Y$ be a closed $3$-manifold and $\zeta\in H_1(Y)/\tors$. Let
  $\tCFa(Y;R_\zeta)$ be the twisted Floer complex of $Y$ with respect
  to $\zeta$ and $\CFa(Y)$ the untwisted Floer complex, with
  $\Ainf$-module structure over $\FF_2[X]/(X^2)$ induced by
  $\zeta$. Then, there is a quasi-isomorphism of $\Ainf$-modules
  \[
    \CFa(Y)\simeq \tCFa(Y;\FF_2[t^{-1},t]_\zeta)^{t=1}.
  \]
\end{proposition}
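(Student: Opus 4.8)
The plan is to show that, for a single well-chosen set of Heegaard data, the two $\Ainf$ modules are \emph{literally equal}, and then to invoke invariance to upgrade "equal for one choice" to "quasi-isomorphic in general."

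First I would fix a weakly admissible pointed Heegaard diagram $\HD$ for $Y$ together with an oriented multicurve $\zeta\subset\Sigma$ representing the given class in $H_1(Y)/\tors$, transverse to $\alphas$ and disjoint from $\alphas\cap\betas$, exactly as in \Section{holomorphic}. Then I would choose an almost complex structure $J$ generic enough that (i) the twisted complex $\tCFa(\HD;\FF_2[t^{-1},t]_\zeta)$ is defined and (ii) all the constrained moduli spaces $\cM^\phi(x,y;C_{i_1},\dots)$ of \eqref{eq:C-mod-1}--\eqref{eq:C-mod-2} are transversely cut out for sufficiently small pushoffs $A_i$; such a $J$ exists because, as noted just before \Lemma{HF-ainf-formula}, the constrained counts depend only on $J$ and $\phi$, not on the pushoffs. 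With this data $\CFa(\HD)$ carries the geometric $\Ainf$ structure of Formula~\eqref{eq:Ainf-holo}, and I would build $\tCFa(\HD;\FF_2[t^{-1},t]_\zeta)$ using \emph{the same} $J$.

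Next I would identify the underlying spaces: $\tCFa(\HD;\FF_2[t^{-1},t]_\zeta)^{t=1}=\bigl(\bigoplus_x\FF_2[t^{-1},t]\bigr)\otimes_{\FF_2[t^{-1},t]}\FF_2\cong\bigoplus_x\FF_2=\CFa(\HD)$ via $x\otimes 1\leftrightarrow x$, and compare the $\Ainf$ operations under this identification. Since $D^0=\Id$, the $\mu_1$ of \Definition{t-to-X} is $\tbdy$ with $t$ set to $1$; by \eqref{eq:t-bdy} its coefficient $t^{\zeta\cdot\bdy_\alpha D(\phi)}$ becomes $1$, so $\mu_1$ agrees with the untwisted differential of \eqref{eq:un-t-bdy}, i.e.\ with the geometric $\mu_1$. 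For $n\geq 1$, the defining property $D^n(t^m)=\binom{m}{n}t^{m-n}$ together with \eqref{eq:t-bdy} gives
\[
  \mu_{1+n}(x,X,\dots,X)=\bigl(D^n\tbdy(\overline x)\bigr)\big|_{t=1}
  =\sum_{y}\sum_{\substack{\phi\in\pi_2(x,y)\\ \mu(\phi)=1,\ n_z(\phi)=0}}\binom{\zeta\cdot\bdy_\alpha D(\phi)}{n}\bigl(\#\cM^\phi(x,y)\bigr)y,
\]
which is precisely the geometric operation computed in \Lemma{HF-ainf-formula}. Hence the two strictly unital $\Ainf$ modules over $\FF_2[X]/(X^2)$ coincide on the nose.

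Finally I would conclude by invariance. By \Theorem{CFa-Ainf-well-defd}, the geometric $\Ainf$ module $\CFa(Y)$ over $\FF_2[X]/(X^2)$ is well-defined up to quasi-isomorphism, independent of $\HD$, $J$, the pushoffs, and the representative $\zeta$ of its class. On the other side, $\tCFa(Y;\FF_2[t^{-1},t]_\zeta)$ is well-defined up to quasi-isomorphism through freely generated complexes over $\FF_2[t^{-1},t]$ (standard twisted-coefficient invariance, together with \Corollary{basis-indep} to absorb basis changes), so by \Lemma{t-to-X-behaves} the induced $\Ainf$ module $\tCFa(Y;\FF_2[t^{-1},t]_\zeta)^{t=1}$ is also well-defined up to quasi-isomorphism. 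Since the two agree for the specific choice above, they are quasi-isomorphic in general. I expect the only place needing genuine care is arranging the auxiliary choices ($J$ and the pushoffs $A_i$) compatibly so that both complexes literally use the same disk counts $\#\cM^\phi(x,y)$, and matching the $t\mapsto 1$ specialization of the $n\th$ Hasse derivative with the binomial count of \Lemma{HF-ainf-formula} — including homotopy classes with $\zeta\cdot\bdy_\alpha D(\phi)$ negative or smaller than $n$ — but both of these are already packaged in \Lemma{HF-ainf-formula} and the definition of $D^n$, so there is no real obstacle; everything else is invariance of complexes already in hand.
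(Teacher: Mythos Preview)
Your proof is correct and takes the same approach as the paper: the paper's proof is the single sentence ``This is immediate from Formula~\eqref{eq:t-bdy} and \Lemma{HF-ainf-formula},'' and you have correctly unpacked exactly why---applying $D^n$ to the coefficient $t^{\zeta\cdot\bdy_\alpha D(\phi)}$ and setting $t=1$ yields $\binom{\zeta\cdot\bdy_\alpha D(\phi)}{n}$, which matches the binomial count of \Lemma{HF-ainf-formula}. Your final invariance paragraph is not needed, since both complexes are built from the same Heegaard diagram and almost complex structure (the proposition is a statement about these concrete models, not about invariants), but it does no harm.
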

\begin{proof}
  This is immediate from Formula~(\ref{eq:t-bdy}) and Lemma~\ref{lem:HF-ainf-formula}.
\end{proof}

\begin{theorem}\label{thm:twist-to-X}
  Let $\tHFa(Y;R_\zeta)$ be the Heegaard Floer
  homology of $Y$ with (twisted) coefficients in $R_\zeta$. As $R$-modules, let
  \[
    \tHFa(Y;R_\zeta)\cong R^{m}\oplus R/(p_1(t))\oplus\cdots\oplus R/(p_n(t))
  \]
  where each $p_i(t)\neq 0$.
  Assume that $p_1(1),\dots,p_k(1)=0$ and
  $p_{k+1}(1),\dots,p_n(1)\neq 0$.
  Then, there is an isomorphism of strictly unital $\Ainf$-modules over $\FF_2[X]/(X^2)$
  \[
    \HFa(Y)\cong \FF_2^m\oplus \FF_2\langle z_1,\dots,z_k,w_1,\dots,w_k\rangle
  \]
  where
  \begin{itemize}[leftmargin=*]
  \item The $\Ainf$-module structure on $\FF_2^m$ and on $\FF_2\langle
    w_1,\dots,w_k\rangle$ is trivial, i.e., for $y\in \FF_2^m\oplus \FF_2\langle
    w_1,\dots,w_k\rangle$ and any $n\geq 0$, 
    \[
      \mu_{1+n}(y,X,\dots,X)=0.
    \]
  \item We have
    \[
      \mu_{1+n}(z_i,X,\dots,X)=\bigl(D^np_i(t)\bigr)|_{t=1}w_i.
    \]
  \end{itemize}
\end{theorem}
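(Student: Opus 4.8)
The plan is to reduce the statement to homological algebra over the PID $R=\FF_2[t^{-1},t]$. By \Proposition{twisted-to-X-Hasse}, the $\Ainf$-module $\CFa(Y)$ with the structure induced by $\zeta$ is $\Ainf$-quasi-isomorphic to $C_*^{t=1}$, where $C_*=\tCFa(Y;\FF_2[t^{-1},t]_\zeta)$ is a finitely generated, freely generated chain complex over $R$ with $H(C_*)=\tHFa(Y;R_\zeta)$. Running the Smith normal form argument from the proof of \Proposition{PID-formal} on the differential of $C_*$, one sees that $C_*$ is isomorphic, as a chain complex of free (graded) $R$-modules, to a direct sum of: (i) acyclic two-term complexes $R\xrightarrow{1}R$; (ii) two-term complexes $R\langle z_i\rangle\xrightarrow{p_i(t)}R\langle w_i\rangle$, one for each $i=1,\dots,n$, with $w_i$ one Maslov degree below $z_i$; and (iii) a summand $R^m$ with zero differential. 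Here the $p_i(t)$ are exactly the torsion orders of $\tHFa(Y;R_\zeta)$; over $\FF_2$ they are honest polynomials, not merely associates. Since the functor $(-)^{t=1}$ visibly commutes with finite direct sums and, by \Corollary{basis-indep}, takes isomorphic complexes to $\Ainf$-quasi-isomorphic modules, it suffices to compute $(-)^{t=1}$ on each of the three kinds of summands, directly from \Definition{t-to-X}.

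For summand (iii), $(R^m,0)^{t=1}=\FF_2^m$ with all $\mu_{1+n}=0$. For (i), since the Hasse derivatives $D^n(1)$ vanish for $n\geq 1$, the induced module has $\mu_1$ an isomorphism and $\mu_{1+n}=0$ for $n\geq 1$; it is acyclic, hence $\Ainf$-homotopy equivalent to $0$ by \Proposition{qi-is-he} and may be discarded. For a summand $R\langle z_i\rangle\xrightarrow{p_i(t)}R\langle w_i\rangle$, the induced module on $\FF_2\langle z_i,w_i\rangle$ has
\[
\mu_1(z_i)=p_i(1)w_i,\qquad \mu_1(w_i)=0,
\]
and, for $n\geq 1$,
\[
\mu_{1+n}(z_i,X,\dots,X)=\bigl(D^np_i(t)\bigr)|_{t=1}\,w_i,\qquad \mu_{1+n}(w_i,X,\dots,X)=0.
\]
If $p_i(1)\neq 0$, i.e. $p_i(1)=1$, then $\mu_1$ is an isomorphism, this module is acyclic, and it too is $\Ainf$-homotopy equivalent to $0$; these are precisely the indices $i>k$, which disappear. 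If $p_i(1)=0$, i.e. $i\leq k$, then $\mu_1=0$ and the displayed formulas are exactly those in the statement. Assembling the pieces, $\CFa(Y)$ is $\Ainf$-homotopy equivalent to $M:=\FF_2^m\oplus\FF_2\langle z_1,\dots,z_k,w_1,\dots,w_k\rangle$ with the asserted $\Ainf$-operations.

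It remains to upgrade this to an isomorphism with $\HFa(Y)$. The module $M$ has vanishing differential $\mu_1$ — on $\FF_2^m$ and on each $w_i$ this is clear, and on $z_i$ it is $p_i(1)w_i=0$ since $i\leq k$ — so $M$ is \emph{minimal}. On the other hand, by \Proposition{hpt} the $\Ainf$-structure on $\HFa(Y)$ is the transfer of the one on $\CFa(Y)$ onto its homology, so it is likewise minimal and $\Ainf$-homotopy equivalent to $\CFa(Y)$, hence to $M$. Therefore there is an $\Ainf$-quasi-isomorphism $g\co \HFa(Y)\to M$; its linear term $g_1$ is a quasi-isomorphism between complexes with zero differential, hence an isomorphism of $\FF_2$-vector spaces, and an $\Ainf$-morphism with invertible linear term is an isomorphism of $\Ainf$-modules, its inverse being constructed by the standard induction. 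This produces the desired isomorphism $\HFa(Y)\cong M$ compatibly with the $\FF_2[X]/(X^2)$-action.

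I expect the main obstacle to be precisely this last paragraph, together with the grading bookkeeping: one must pin down that the $\Ainf$-structure meant by ``the induced structure on $\HFa(Y)$'' is genuinely a minimal transfer of $\CFa(Y)$, invoke uniqueness of minimal $\Ainf$-models (equivalently, that a quasi-isomorphism between minimal modules is invertible) correctly, and keep track of the cyclic Maslov grading throughout so that $w_i$ sits one degree below $z_i$ and the isomorphism is grading-preserving. All of the holomorphic-curve and twisted-coefficient input is packaged into \Proposition{twisted-to-X-Hasse}, so everything after that step is formal.
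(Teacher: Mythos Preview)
Your proof is correct and follows essentially the same approach as the paper: decompose $\tCFa(Y;R_\zeta)$ via Smith normal form into a free summand and two-step complexes (as in the proof of \Proposition{PID-formal}), invoke \Corollary{basis-indep} and \Proposition{twisted-to-X-Hasse}, and compute $(-)^{t=1}$ on each piece directly from \Definition{t-to-X}. Your final paragraph, upgrading the quasi-isomorphism $\CFa(Y)\simeq M$ to an $\Ainf$-isomorphism $\HFa(Y)\cong M$ via minimality of both sides, makes explicit a step the paper leaves implicit; one small inaccuracy is the remark that the $p_i$ are ``honest polynomials, not merely associates''---units in $\FF_2[t^{-1},t]$ are the powers $t^k$, so the $p_i$ are only determined up to such, but this is harmless since the unit can be absorbed into the basis.
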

\begin{proof}
  First, observe that $\tCFa(Y;R_\zeta)$ decomposes as a direct sum
  of 1-step and 2-step complexes. That is, we can find a basis
  $b_1,\cdots,b_p,c_1,\dots,c_p,d_1,\dots,d_m$ for $\tCFa(Y;R_\zeta)$
  so that $\tbdy(b_i)=q_i(t)c_i$ and $\tbdy(c_i)=\tbdy(d_i)=0$. That
  such a basis exists follows from the proof of
  Proposition~\ref{prop:PID-formal}. Further, we can arrange that
  $q_i(t)=p_i(t)$ for $i\leq n$ and $q_i(t)$ is a unit for $i>n$.

  By Corollary~\ref{cor:basis-indep}, $\tCFa(Y;R_\zeta)^{t=1}$ can be
  computed using this basis, and by
  Proposition~\ref{prop:twisted-to-X-Hasse},
  $\tCFa(Y;R_\zeta)^{t=1}\simeq \CFa(Y)$, as $\Ainf$-modules. So, it
  suffices to consider a single summand $d_i$ or $b_i\stackrel{q_i(t)}{\lra}
  c_i$ of $\tCFa(Y;R_\zeta)^{t=1}$.

  It is immediate from the definitions that the summand generated by
  $d_i$ gives a summand $R$ of $\tHFa(Y;R_\zeta)$ and a summand $\FF_2$
  of $\tCFa(Y;R_\zeta)^{t=1}$.

  A summand of the form $b_i\stackrel{q_i(t)}{\lra} c_i$ gives a copy
  of $R/(q_i(t))$ of $\tHFa(Y;R_\zeta)$ (which is trivial
  if $q_i(t)$ is a unit). From Definition~\ref{def:t-to-X}, a summand
  of the form $b_i\stackrel{q_i(t)}{\lra} c_i$ gives a summand of
  $\tCFa(Y;R_\zeta)^{t=1}$ with trivial homology if $q_i(1)\neq
  0$. If $q_i(1)=0$ then the corresponding summand of
  $\tCFa(Y;R_\zeta)^{t=1}$ is $2$-dimensional, generated by $z_i$ and
  $w_i$, say, has trivial differential, and has 
  \[
    \mu_{1+n}(z_i,X,\dots,X)=\bigl(D^np_i(t)\bigr)|_{t=1}w_i,
  \]
  as claimed.
\end{proof}

\begin{corollary}\label{cor:unrolled-from-twisted}
  With notation as in Theorem~\ref{thm:twist-to-X}, the unrolled
  homology of $\CFa(Y)$ is isomorphic to $\FF_2[Y^{-1},Y]]^{m}$.
\end{corollary}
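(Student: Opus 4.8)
\emph{Proof proposal.} The plan is to transport the computation to the explicit description of the $\Ainf$ module $\HFa(Y)$ furnished by Theorem~\ref{thm:twist-to-X}, and then evaluate the unrolled complex one summand at a time, exploiting that $\FF_2[Y^{-1},Y]]$ is a field.

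First I would note that, by homological perturbation theory (Proposition~\ref{prop:hpt}), $\CFa(Y)$ is $\Ainf$ homotopy equivalent to $\HFa(Y)$ equipped with a transferred $\Ainf$ structure, hence (Proposition~\ref{prop:qi-is-he}) $\Ainf$ quasi-isomorphic to it; by Theorem~\ref{thm:twist-to-X} this module is $\FF_2^m\oplus\FF_2\langle z_1,\dots,z_k,w_1,\dots,w_k\rangle$ with the operations recorded there. All of these $\Ainf$ modules are finite-dimensional over $\FF_2$, hence finitely generated over $\FF_2[X]/(X^2)$, so Lemma~\ref{lem:X-hom-well-def-ungr} identifies $H_*(\unroll{\CFa(Y)})$ with the unrolled homology of the explicit module. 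Since the unrolled complex of a direct sum of $\Ainf$ modules is visibly the direct sum of the unrolled complexes (there being no $\Ainf$ operations mixing the summands), it remains to compute the unrolled homology of each of the $m$ trivial summands $\FF_2$ and of each of the two-dimensional summands $\FF_2\langle z_i,w_i\rangle$, $1\le i\le k$.

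For a trivial summand $\FF_2$ the unrolled differential vanishes identically, so its unrolled homology is $\FF_2[Y^{-1},Y]]$; the $m$ such summands together contribute the factor $\FF_2[Y^{-1},Y]]^m$. For the summand $\FF_2\langle z_i,w_i\rangle$, unwinding Definition~\ref{def:unrolled} and using that the unrolled differential is $\FF_2[Y^{-1},Y]]$-linear shows that the unrolled complex is the free $\FF_2[Y^{-1},Y]]$-module on $z_i,w_i$ with
\[
  \bdy(z_i)=\Bigl(\sum_{n\ge 0}\bigl(D^np_i(t)\bigr)|_{t=1}\,Y^n\Bigr)w_i,\qquad \bdy(w_i)=0.
\]
By the Taylor-expansion property of Hasse derivatives, $\sum_{n\ge 0}\bigl(D^np_i(t)\bigr)|_{t=1}\,Y^n=p_i(1+Y)$, an element of $\FF_2[[Y]]$ that is nonzero because $p_i\neq 0$ (the substitution $t\mapsto 1+Y$ defines an injection $\FF_2[t^{-1},t]\hookrightarrow\FF_2[Y^{-1},Y]]$). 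As $\FF_2[Y^{-1},Y]]$ is a field, $p_i(1+Y)$ is a unit, so $\bdy$ restricts to an isomorphism $\FF_2[Y^{-1},Y]]\cdot z_i\isomorphicto\FF_2[Y^{-1},Y]]\cdot w_i$; hence this summand is acyclic. Adding the contributions of all summands yields $H_*(\unroll{\CFa(Y)})\cong\FF_2[Y^{-1},Y]]^m$, as claimed.

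I do not expect a genuine obstacle: the statement is essentially a bookkeeping consequence of Theorem~\ref{thm:twist-to-X}. The one point deserving care is the identification of the series $\sum_n\bigl(D^np_i(t)\bigr)|_{t=1}\,Y^n$ with $p_i(1+Y)$, together with the observation that it is a unit in the Laurent series field — in particular, that the $Y$-adic completion built into the definition of the unrolled complex causes no convergence trouble, which it does not, since $p_i(1+Y)$ already lies in $\FF_2[[Y]]$.
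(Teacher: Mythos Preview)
Your proof is correct and follows essentially the same approach as the paper: reduce to $\HFa(Y)$ via Lemma~\ref{lem:X-hom-well-def-ungr} and Theorem~\ref{thm:twist-to-X}, then handle the trivial and two-dimensional summands separately. The only difference is cosmetic: for the summand $\FF_2\langle z_i,w_i\rangle$, the paper invokes the horizontal-filtration spectral sequence and Proposition~\ref{prop:Hasse-Leibniz} to find the first nonvanishing $d_k$, whereas you compute the full unrolled differential as multiplication by $p_i(1+Y)$ and observe this is a unit in the field $\FF_2[Y^{-1},Y]]$ --- a slightly more direct route to the same conclusion.
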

\begin{proof}
  Since the unrolled homology is invariant under $\Ainf$
  quasi-isomorphism, the unrolled homology of $\CFa(Y)$ is isomorphic
  to the unrolled homology of $\HFa(Y)$. Clearly, the $\FF_2^m\subset
  \HFa(Y)$ survives to give a copy of $\FF_2[Y^{-1},Y]]^{m}$ in the
  unrolled homology. It remains to see that the other summands of
  $\HFa(Y)$ do not contribute to the unrolled homology.

  Consider the summand of $\HFa(Y)$ generated by $z_i$ and $w_i$. By 
  Proposition~\ref{prop:Hasse-Leibniz}, since $p_i(t)\neq 0$, there
  is some integer $k\geq 1$ so that
  $\bigl(D^kp_i(t)\bigr)|_{t=1}\neq 0$. Let $k$ be the first such
  integer. Consider the spectral sequence computing the unrolled
  homology of $\HFa(Y)$, associated to the horizontal
  filtration. Then, on this summand, the first nontrivial differential
  in this spectral sequence is $d_k(z_i)=\alpha Y^kw_i$, where
  $\alpha=\bigl(D^kp_i(t)\bigr)|_{t=1}$. The homology of this
  summand with respect to this differential vanishes.
\end{proof}

\begin{corollary}\label{cor:free-is-nontriv}
  If $\tHFa(Y;R_\zeta)$ has an $\FF_2[t^{-1},t]$-summand then the
  unrolled homology of $\CFa(Y)$ with respect to the action by $\zeta$
  is nontrivial.
\end{corollary}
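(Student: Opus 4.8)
The plan is to deduce this immediately from Corollary~\ref{cor:unrolled-from-twisted}. Writing
\[
  \tHFa(Y;R_\zeta)\cong R^{m}\oplus R/(p_1(t))\oplus\cdots\oplus R/(p_n(t))
\]
as in Theorem~\ref{thm:twist-to-X}, the hypothesis that $\tHFa(Y;R_\zeta)$ has an $\FF_2[t^{-1},t]$-summand means precisely that $m\geq 1$. Corollary~\ref{cor:unrolled-from-twisted} then identifies the unrolled homology of $\CFa(Y)$ (with respect to the action by $\zeta$) with $\FF_2[Y^{-1},Y]]^{m}$, which is nonzero since $m\geq 1$. Hence the unrolled homology is nontrivial.

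Alternatively, one can argue directly without invoking the full statement of Corollary~\ref{cor:unrolled-from-twisted}. A free summand $R=\FF_2[t^{-1},t]$ of $\tHFa(Y;R_\zeta)$ corresponds, after decomposing $\tCFa(Y;R_\zeta)$ into two-step complexes as in the proof of Theorem~\ref{thm:twist-to-X}, to a generator $d_i$ of $\tCFa(Y;R_\zeta)^{t=1}$ on which all the $\Ainf$-operations $\mu_{1+n}(\cdot,X,\dots,X)$ vanish. Inspecting the spectral sequence associated to the horizontal filtration on the unrolled complex (as in the proof of Corollary~\ref{cor:unrolled-from-twisted}), every differential out of such a generator vanishes, so it contributes a free rank-one $\FF_2[Y^{-1},Y]]$-summand to $H_*(\unroll{\CFa(Y)})$. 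Either way there is no real obstacle: the content has already been extracted in the preceding corollaries, and this statement is a one-line corollary of them.
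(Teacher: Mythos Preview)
Your proof is correct and matches the paper's approach: the paper states Corollary~\ref{cor:free-is-nontriv} with no proof, placing it immediately after Corollary~\ref{cor:unrolled-from-twisted}, so it is meant as the one-line consequence you give (namely, $m\geq 1$ forces $\FF_2[Y^{-1},Y]]^{m}\neq 0$). Your alternative direct argument is also fine but unnecessary.
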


\begin{corollary}\label{cor:unrolled-is-Novikov}
  The unrolled homology of $\CFa(Y)$ is isomorphic to the completed
  twisted coefficient homology $\tHFa(Y;\FF_2[t^{-1},t]])$.
\end{corollary}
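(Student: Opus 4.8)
The plan is to identify both sides explicitly, using Theorem~\ref{thm:twist-to-X} (more precisely the decomposition of $\tHFa(Y;R_\zeta)$ it records) together with Corollary~\ref{cor:unrolled-from-twisted}. First I would unwind the right-hand side: the ring $\FF_2[t^{-1},t]]$ is an $\FF_2[H_2(Y)]$-module via $\FF_2[H_2(Y)]\xrightarrow{\zeta}\FF_2[t^{-1},t]\hookrightarrow\FF_2[t^{-1},t]]$, so $\tHFa(Y;\FF_2[t^{-1},t]])$ is by definition the homology of $\tCFa(Y;R_\zeta)\otimes_R\FF_2[t^{-1},t]]$, where $R=\FF_2[t^{-1},t]$. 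Since $\tCFa(Y;R_\zeta)$ is finitely generated and free over the PID $R$, and $\FF_2[t^{-1},t]]$ is torsion-free over $R$ and hence flat (as $R$ is a PID), extension of scalars commutes with taking homology, giving
\[
  \tHFa(Y;\FF_2[t^{-1},t]])\cong\tHFa(Y;R_\zeta)\otimes_R\FF_2[t^{-1},t]].
\]

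Next I would substitute the decomposition $\tHFa(Y;R_\zeta)\cong R^m\oplus\bigoplus_{i=1}^n R/(p_i(t))$, with each $p_i(t)\neq 0$, from Theorem~\ref{thm:twist-to-X}. Because $\FF_2[t^{-1},t]]$ is a field, each nonzero $p_i(t)$ becomes a unit there, so every torsion summand $R/(p_i(t))\otimes_R\FF_2[t^{-1},t]]$ vanishes, and we are left with $\tHFa(Y;\FF_2[t^{-1},t]])\cong\FF_2[t^{-1},t]]^m$.

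Finally, Corollary~\ref{cor:unrolled-from-twisted} already identifies the unrolled homology of $\CFa(Y)$ with $\FF_2[Y^{-1},Y]]^m$ for the same $m$. Relabeling the formal variable $Y$ as $t$ identifies $\FF_2[Y^{-1},Y]]^m$ with $\FF_2[t^{-1},t]]^m$, which finishes the proof. There is essentially no obstacle: the only point worth spelling out is that passing to $\FF_2[t^{-1},t]]$-coefficients commutes with homology, which is the flatness statement above together with finite generation of $\tCFa(Y;R_\zeta)$. (Alternatively, one can bypass Corollary~\ref{cor:unrolled-from-twisted}: Proposition~\ref{prop:twisted-to-X-Hasse}, together with the ring map $\FF_2[t^{-1},t]\to\FF_2[Y^{-1},Y]]$, $t\mapsto 1+Y$, identifies $\unroll{\CFa(Y)}$ with $\tCFa(Y;R_\zeta)\otimes_R\FF_2[Y^{-1},Y]]$, after which one concludes as above; but routing through Corollary~\ref{cor:unrolled-from-twisted} is shorter.)
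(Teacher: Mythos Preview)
Your proof is correct and follows essentially the same approach as the paper: both invoke Corollary~\ref{cor:unrolled-from-twisted} to identify the unrolled homology as $\FF_2[Y^{-1},Y]]^m$, and both use flatness of $\FF_2[t^{-1},t]]$ over $R=\FF_2[t^{-1},t]$ (what the paper calls the universal coefficient theorem) to compute $\tHFa(Y;\FF_2[t^{-1},t]])\cong\tHFa(Y;R_\zeta)\otimes_R\FF_2[t^{-1},t]]$. You simply spell out the tensor-product computation explicitly via the structure-theorem decomposition, whereas the paper leaves that step implicit.
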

\begin{proof}
  This is immediate from Corollary~\ref{cor:unrolled-from-twisted},
  which computes the unrolled homology in terms of $\tHFa(Y;R_\zeta)$,
  and the universal coefficient theorem, which says that
  \[
    \tHFa(Y;\FF_2[t^{-1},t]])\cong \tHFa(Y;R_\zeta)\otimes_R
    \FF_2[t^{-1},t]].
  \]
  (Recall that $\FF_2[t^{-1},t]]$ is flat over $\FF_2[t^{-1},t]$.)
\end{proof}

While we will not need it for our application, we conclude this
section by noting a more homological-algebraic interpretation of
Proposition~\ref{prop:twisted-to-X-Hasse}. View $\FF_2$ as an
$R=\FF_2[t^{-1},t]$-module in the usual way, by letting $t$ act by
$1$. Then, $\FF_2$ has a 2-step free resolution over $R$:
\[
  R\stackrel{1-t}{\lra} R.
\]
From this, it is straightforward to compute that
$\Ext_R(\FF_2,\FF_2)\cong \FF_2[X]/(X^2)$ (and, in fact, $\RHom_R(\FF_2,\FF_2)$
is quasi-isomorphic to $\FF_2[X]/(X^2)$).

For any chain complex $C_*$ over $R$, there is an $\Ainf$ action of
$\Ext_R(\FF_2,\FF_2)$ on $\Tor_R(C_*,\FF_2)$. Explicitly, $\Tor_R(C_*,\FF_2)$ is
the homology of the total complex of the bicomplex
\begin{equation}\label{eq:1t-bicomplex}
  0\lra C\stackrel{1-t}{\lra} C\lra 0.
\end{equation}
The element $X$ shifts this bicomplex one unit to the right, i.e.,
sends the first copy of $C$ to the second by the identity map and
sends the second copy of $C$ to $0$. So, this total complex is a
differential module over $\FF_2[X]/(X^2)$, and its homology
$\Tor_R(C_*,\FF_2)$ inherits the structure of an $\Ainf$-module over
$\FF_2[X]/(X^2)$.

\begin{theorem}\label{thm:Koszul}
  For any finitely generated, free chain complex $C_*$ over $R$, there
  is a quasi-isomorphism of $\Ainf$-modules over $\FF_2[X]/(X^2)$
  \[
    C_*^{t=1}\simeq \Tor_R(C_*,\FF_2).
  \]
  In particular, as $\Ainf$-modules over $\FF_2[X]/(X^2)$, 
  \[
    \HFa(Y)\simeq \Tor_{\FF_2[t^{-1},t]}(\tHFa(Y),\FF_2).
  \]
\end{theorem}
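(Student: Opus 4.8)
The plan is to reduce the general statement to a list of small building-block complexes, in the spirit of the proof of Theorem~\ref{thm:twist-to-X}. First I would invoke the Smith normal form argument from the proof of Proposition~\ref{prop:PID-formal}: after a change of basis the differential of $C_*$ is block-diagonal, so $C_*$ is isomorphic, as a chain complex over $R=\FF_2[t^{-1},t]$, to a direct sum of a free complex with zero differential and two-step complexes $R\langle e_1\rangle\xrightarrow{r(t)}R\langle e_0\rangle$ with $0\neq r(t)\in R$. Both sides of the asserted equivalence are additive: for $C_*^{t=1}$ this is immediate from Definition~\ref{def:t-to-X} (a block-diagonal differential has block-diagonal Hasse derivatives) together with Corollary~\ref{cor:basis-indep}, and the model of $\Tor_R(-,\FF_2)$ as the homology of the bicomplex $C_*\xrightarrow{1-t}C_*$, with the $\Ainf$ module structure coming from the shift by $X$, visibly respects direct sums. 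Since quasi-isomorphism of $\Ainf$ modules passes to direct sums, it then suffices to check the building blocks.

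The free summand $R$ is easy: $C_*^{t=1}=\FF_2$ with trivial $\Ainf$ structure, and in the bicomplex $R\xrightarrow{1-t}R$ all homology sits in the bottom row, so $X$ acts by zero and $\Tor_R(R,\FF_2)=\FF_2$ is also trivial. For a two-step summand $R\langle e_1\rangle\xrightarrow{r(t)}R\langle e_0\rangle$ there are two cases. If $r(1)\neq0$ then $\mu_1$ is an isomorphism on $C_*^{t=1}$, so $C_*^{t=1}\simeq0$, and replacing the two-step complex by $R/(r)$ via Proposition~\ref{prop:PID-formal} one computes $H_0=R/(r,1-t)=0$ and $H_1=\ker(1-t\colon R/(r)\to R/(r))=0$, so $\Tor_R=0$ as well. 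If $r(1)=0$, then Definition~\ref{def:t-to-X} gives $C_*^{t=1}=\FF_2\langle\overline{e_0},\overline{e_1}\rangle$ with vanishing differential, $\mu_{1+n}(\overline{e_1},X,\dots,X)=(D^nr(t))|_{t=1}\,\overline{e_0}$, and all other operations zero; I would match this with the other side as follows. Writing $r=(t-1)r_1$, the bicomplex $T=\bigl(R/(r)\xrightarrow{1-t}R/(r)\bigr)$ with $X$ the rightward shift is an honest differential $\FF_2[X]/(X^2)$ module with $H_0=\FF_2\langle[1]\rangle$ (class of $1$ in the degree-zero copy) and $H_1=\FF_2\langle[r_1]\rangle$ (class of $r_1$ in the degree-one copy). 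I would transfer the $\Ainf$ structure to homology by homological perturbation (Proposition~\ref{prop:hpt}), choosing a section $\iota$ of the projection $\pi$ and a contraction $h$ satisfying the usual side conditions. Because $T$ is an honest module, the perturbation formula contributes only the single term $\mu_{1+n}([r_1],X,\dots,X)=\pi\,(X_Th)^{n-1}\,X_T\,\iota([r_1])$, where $X_T$ carries the degree-one copy of $R/(r)$ onto the degree-zero copy by the identity and $h$ may be taken to send $[a]$ in the degree-zero copy to $[(a(t)-a(1))/(t-1)]$ in the degree-one copy. Iterating gives $\mu_{1+n}([r_1],X,\dots,X)=r_n(1)\,[1]$, where $r_0=r$ and $r_j=(r_{j-1}(t)-r_{j-1}(1))/(t-1)$. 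Finally, comparing the telescoping expansion $r(t)=\sum_j r_j(1)(t-1)^j$ with the Hasse--Taylor expansion $r(t)=\sum_j(D^jr(t))|_{t=1}(t-1)^j$ gives $r_n(1)=(D^nr(t))|_{t=1}$, and the two $\Ainf$ modules agree under $[1]\leftrightarrow\overline{e_0}$, $[r_1]\leftrightarrow\overline{e_1}$.

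For the displayed special case I would apply the general statement to $C_*=\tCFa(Y;R_\zeta)$, a finitely generated free complex over $R$, and combine it with Proposition~\ref{prop:twisted-to-X-Hasse} (identifying $\CFa(Y)$ with $C_*^{t=1}$ as $\Ainf$ modules), Proposition~\ref{prop:PID-formal} (identifying $\tCFa(Y;R_\zeta)$ with $\tHFa(Y;R_\zeta)$ in the derived category, hence $\Tor_R(\tCFa(Y;R_\zeta),\FF_2)\simeq\Tor_R(\tHFa(Y;R_\zeta),\FF_2)$), and Proposition~\ref{prop:qi-is-he} to pass to homology. Alternatively, one can read it off directly by comparing Theorem~\ref{thm:twist-to-X}, which already computes the $\Ainf$ module $\HFa(Y)$ from the decomposition $\tHFa(Y;R_\zeta)\cong R^m\oplus\bigoplus_i R/(p_i(t))$, with the value of $\Tor_R(R^m\oplus\bigoplus_i R/(p_i),\FF_2)$ supplied by the building-block cases above; the two answers coincide term by term.

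The hard part will be the computation of the transferred $\Ainf$ structure on the homology of the two-step bicomplex: one must pick contraction data for which the homological-perturbation formula reduces to the single term above, and then see that iterating the contraction $h$ — whose effect is ``subtract the value at $t=1$, then divide by $t-1$'' — reproduces exactly the tower of Hasse derivatives hard-wired into Definition~\ref{def:t-to-X}. (An alternative, avoiding the summand reduction, is to write down a global $\Ainf$ quasi-isomorphism from the bicomplex $C_*\xrightarrow{1-t}C_*$ directly to $C_*^{t=1}$, with higher components given by an explicit binomial-coefficient formula applied to the matrix of $\bdy$; the bookkeeping seems comparable.)
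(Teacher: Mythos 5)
Your overall strategy is sound and, up to the reduction step, parallels the paper's: the paper likewise reduces (as in the proof of Theorem~\ref{thm:twist-to-X}) to a single generator $d$ or a two-step summand $b\xrightarrow{p(t)}c$, and in the interesting case $p(1)=0$ it introduces exactly your tower of polynomials ($q_1=p/(1-t)$, $q_{n+1}=(q_n(1)-q_n(t))/(1-t)$, with $q_n(1)=(D^np)(1)$ proved via the same finite Taylor-type identity you invoke). Where you diverge is in how the two sides are matched: the paper keeps the full two-step complex, forms the square bicomplex $E_*$ over $R$, and writes down an explicit strictly unital $\Ainf$ morphism $f\co C_*^{t=1}\to E_*$ with $f_1(b)=Xb+q_1(t)c$ and $f_{1+n}(b,X,\dots,X)=q_{n+1}(t)c$, verifying the $\Ainf$ relations by hand; no contraction or perturbation machinery is needed. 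You instead first replace the two-step summand by its homology $R/(r)$ (this is fine, but you should note explicitly that the quasi-isomorphism of $R$-complexes from Proposition~\ref{prop:PID-formal} induces an $X$-equivariant quasi-isomorphism of the total complexes of the two bicomplexes, so the transferred $\Ainf$ structures agree up to quasi-isomorphism by Proposition~\ref{prop:qi-is-he}), and then compute the transferred structure on $H$ of the small bicomplex by homotopy transfer. Your route buys a smaller, finite-dimensional model and makes the appearance of the Hasse derivatives look inevitable (``subtract the value at $1$, divide by $t-1$'' iterated); the paper's route avoids having to produce and verify contraction data at all.

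The one genuine gap is the contraction itself: the formula $h([a])=[(a(t)-a(1))/(t-1)]$ is \emph{not} well defined on $R/(r)$. Changing the representative $a\mapsto a+ru$ changes the proposed value by $[r_1u]$ (using $r=(t-1)r_1$ and $r(1)=0$), and $[r_1]$ is precisely the nonzero generator of $\ker(1-t)$, so the ambiguity is genuinely nonzero. Since your entire computation of $\mu_{1+n}([r_1],X,\dots,X)=r_n(1)[1]$ runs through iterates of this $h$, the argument does not stand as written. It is repairable: normalize $r$ (multiplying by a unit $t^k$, which only changes the basis of $C_*$, cf.\ Corollary~\ref{cor:basis-indep}) to an honest polynomial with $r(0)\neq 0$, take the canonical representatives of $R/(r)$ of degree less than $\deg r$, and define $h$ by your formula on those representatives; one then checks $dh=\Id-\iota\pi$, $hd=\Id-\iota\pi$, and the side conditions $h\iota=0$, $\pi h=0$ for the evident $\iota,\pi$ (with $\pi_0=$ evaluation at $t=1$). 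With this fix your iteration goes through verbatim, because each $r_j$ has degree $\deg r-j<\deg r$ and is therefore its own canonical representative, so $h([r_j])=[r_{j+1}]$ exactly; the example $r=(t-1)^2$ is a useful sanity check. Alternatively you can sidestep contractions entirely by adopting the paper's explicit morphism. The deduction of the displayed special case from the general statement (via Proposition~\ref{prop:twisted-to-X-Hasse} and Proposition~\ref{prop:PID-formal}) is correct.
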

\begin{proof}
  As in the proof of Theorem~\ref{thm:twist-to-X}, it suffices to
  prove the result when $C_*$ consists of a single generator $d$ or a
  pair of generators $b,c$ with differential
  $b\stackrel{p(t)}{\lra} c$. In the first case, it is straightforward
  to see that both $C_*^{t=1}$ and $\Tor_R(C_*,\FF_2)$ are isomorphic to
  $\FF_2$ with trivial $\Ainf$-module structure. In the second case, if
  $p(1)\neq 0$ then $C_*^{t=1}$ is acyclic and $\Tor_R(C_*,R)=0$. So,
  it remains to verify the second case under the assumption that
  $p(1)=0$.

  Let $E_*$ be the total complex of the
  bicomplex~\eqref{eq:1t-bicomplex}.  We will construct an $\Ainf$
  quasi-isomorphism $f\co C_*^{t=1}\to E_*$.

  To fix notation, write $E_*=C\otimes \FF_2[X]/(X^2)$, with
  differential
  \[
    \begin{bmatrix}
      \bdy_C & (1-t)\\
      0 & \bdy_C
    \end{bmatrix}.
  \]
  That is, the complex $E_*$ is the total complex of the square
  \[
    \mathcenter{
    \xymatrix{
      R\ar[r]^-{1-t}\ar[d]_{p(t)} & R\ar[d]^{p(t)}\\
      R\ar[r]_-{1-t} & R.
    }}
  \ =\ 
  \mathcenter{
    \xymatrix{
    R\langle b\rangle\ar[r]^-{1-t}\ar[d]_{p(t)} & R\langle Xb\rangle\ar[d]^{p(t)}\\
      R\langle c\rangle\ar[r]_-{1-t} & R\langle Xc\rangle.
    }}
  \]
  
  Define Laurent polynomials $q_n(t)$ inductively by
  \begin{align*}
    q_1(t)&=p(t)/(1-t)\\
    q_{n+1}(t)&=(q_n(1)-q_n(t))/(1-t).
  \end{align*}
  The fact that $q_1(t)$ is a Laurent polynomial follows from the
  restriction that $p(1)=0$.

  We claim that 
  \[
    q_n(1)=\bigl(D^np(t)\bigr)|_{t=1}.
  \]
  By induction, we have
  \[
    p(t)=(t-1)q_1(1)+(t-1)^2q_2(1)+\cdots+(t-1)^{n-1}q_{n-1}(1)+(t-1)^nq_n(t)
  \]
  (cf.\ Taylor's theorem).  Hence,
  \[
    D^np(t)=(D^n(t-1)^n)q_n(t)+(t-1)r(t)=q_n(t)+(t-1)r(t).
  \]
  Evaluating at $1$ verifies the claim.
  
  Now, define:
  \begin{align*}
    f_1(c)&=Xc\\
    f_{1+n}(c,X,\dots,X)&=0 & n>0\\
    f_1(b)&=Xb + q_1(t)c\\
    f_{1+n}(b,X,\dots,X)&=q_{n+1}(t)c & n>0.
  \end{align*}
  It is straightforward to see that $f_1$ is a quasi-isomorphism. We
  claim that the $f_i$ satisfy the $\Ainf$ homomorphism relations;
  this finishes the proof. We must check that for $y\in\{b,c\}$,
  \[
    \sum_{i+j=n}\mu_{1+i}^E(f_{1+j}(y,X,\dots,X),X,\dots,X)+f_{1+i}(\mu_{1+j}^{C^{t=0}}(y,X,\dots,X),X,\dots,X)=0.
  \]
  Recall that $\mu_{1+i}^E=0$ for $i>1$. In the case $y=c$, each
  term in the equation vanishes. For $y=b$ and $n=0$, the
  left side of the equation is
  \[
    \bdy^E(f_1(b))+f_1(\bdy^{C^{t=0}}(b))=\bdy^E(Xb+q_1(t)c)=(p(t)+(1-t)q_1(t))Xc=0.
  \]
  For $y=b$ and $n>0$, the left side is equal to
  \begin{align*}
    &\bdy^E(f_{1+n}(b,X,\dots,X)+\mu_2^E(f_{n}(b,X,\dots,X),X)+\sum_{i+j=n}f_{1+i}((D^jp(t))|_{t=1}c,X,\dots,X)\\
    &=\bdy^E(q_{n+1}(t)c)+\mu_2^E(q_n(t)c,X)+(D^np(t))|_{t=1}Xc\\
    &=q_{n+1}(t)(1-t)Xc+q_n(t)Xc+q_n(1)Xc\\
    &=(q_n(1)-q_n(t)+q_n(t)+q_n(1))Xc\\
    &=0,
  \end{align*}
  as desired.
\end{proof}

\begin{remark}
  Presumably, one can give a direct proof of
  Theorem~\ref{thm:Koszul}, without relying on the classification of
  finitely generated modules over a PID, but the computations required
  seem involved.
\end{remark}

\begin{remark}
  For simplicity, we have worked in characteristic $2$ and focused on
  the action of a single element $\zeta\in H_1(Y)/\tors$, but we
  expect that the results in this section generalize to the entire
  $\Ainf$-module structure over $\Lambda^* H_1(Y)/\tors$ over $\ZZ$ (though some
  of the proofs do not).
\end{remark}

\section{The module structure on Khovanov homology and the Ozsv\'ath-Szab\'o spectral sequence}\label{sec:Kh-module}
\subsection{Definition and invariance of the basepoint action on
  Khovanov homology}
Fix a link diagram $L$ and a basepoint $q\in L$ not at any of the
crossings. (From here on, \emph{basepoint} means ``basepoint not at a
crossing.'') The Khovanov complex $\KhCx(L)$ of $L$ inherits the
structure of a module over $\FF_2[X]/(X^2)$ as follows. A generator of
$\KhCx(L)$ is a complete resolution of $L$ and a decoration of each
component of the resolution by $1$ or $X$. Multiplication by $X$ on a
generator of $\KhCx(L)$:
\begin{itemize}[leftmargin=*]
\item is zero if the generator labels the circle containing $q$ by $X$ and
\item changes the label on the circle containing $q$ to $X$, if the generator
  labels the circle containing $q$ by $1$.
\end{itemize}
It is straightforward to check that multiplication by $X$ is a chain
map.  The action of $X$ preserves the homological grading
and decreases the quantum grading by $2$.

Given two basepoints $p,q\in L$, the actions at $p$ and $q$ commute,
and hence make
$\KhCx(L)$ into a differential bimodule over $\FF_2[W]/(W^2)$ and
$\FF_2[X]/(X^2)$ or, equivalently, a differential module over
$\FF_2[W,X]/(W^2,X^2)$. Note that while $\KhCx(L)$ is free over
$\FF_2[X]/(X^2)$, it is typically not free over
$\FF_2[W,X]/(W^2,X^2)$.

Let $\Sigma^{a,b}$ denote shifting the homological grading up by $a$
and the quantum grading up by $b$.

Given a basepoint $p$ on $L$, the \emph{reduced Khovanov complex}
$\rKhCx(L)$ is the subcomplex of $\Sigma^{0,1}\KhCx(L)$ where the
circle containing $p$ is labeled $X$ or, equivalently, the quotient
complex of $\Sigma^{0,-1}\KhCx(L)$ where the circle containing $p$ is
labeled $1$. Given a second basepoint $q$ on $L$, $\rKhCx(L)$ inherits
a module structure over $\FF_2[X]/(X^2)$.

We will use the following lemma, to avoid writing the
same proof twice in Theorem~\ref{thm:Kh-mod-well-defd}.
\begin{lemma}\label{lem:reduce-Kh}
  Let $L$ be a link and $p,q$ basepoints on $L$. Write $\KhCx(L)_{p,q}$
  for the Khovanov complex $\KhCx(L)$ viewed as a bimodule over
  $\FF_2[W]/(W^2)$ and $\FF_2[X]/(X^2)$ via the basepoints $p$ and $q$. Write
  $\rKhCx(L)_{p,q}$ for the reduced Khovanov complex,
  reduced at $p$ and viewed as a module over $\FF_2[X]/(X^2)$ via the
  basepoint $q$. View $\FF_2$ as a module over
  $\FF_2[W]/(W^2)$ where $W$ acts by $0$. Then, there is a chain
  isomorphism of $\FF_2[X]/(X^2)$-modules
  \[
    \rKhCx(L)_{p,q}\cong \Sigma^{0,-1}\KhCx(L)_{p,q}\otimes_{\FF_2[W]/(W^2)}\FF_2.
  \]
  Further, $\KhCx(L)$ is a free module over $\FF_2[W]/(W^2)$, so $\rKhCx(L)$ is
  quasi-isomorphic to the $\Ainf$ tensor product of the $\Ainf$-bimodule
  $\Kh(L)$ and the $\FF_2[W]/(W^2)$ module $\FF_2$.
\end{lemma}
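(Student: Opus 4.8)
The plan is to prove the two assertions in turn; the first is an identification of explicit complexes and the second is a formal consequence of it together with flatness. For the chain isomorphism I would first recall that, with respect to the basepoint $p$, the complex $\KhCx(L)$ is free over $\FF_2[W]/(W^2)$: a basis is the set of generators (complete resolution together with a labelling) that label the circle through $p$ by $1$, and $W$ carries such a generator to the corresponding generator with that circle relabelled $X$ (and kills a generator already labelled $X$). Hence $W\cdot\KhCx(L)$ is exactly the $\FF_2$-span of the generators labelling the $p$-circle by $X$, which is precisely the subcomplex of $\Sigma^{0,-1}\KhCx(L)$ whose quotient is, by definition, $\rKhCx(L)$. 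So as chain complexes
\[
  \Sigma^{0,-1}\KhCx(L)_{p,q}\otimes_{\FF_2[W]/(W^2)}\FF_2
  \;=\; \Sigma^{0,-1}\KhCx(L)_{p,q}\big/\bigl(W\cdot\KhCx(L)_{p,q}\bigr)
  \;=\; \rKhCx(L)_{p,q}.
\]
It then remains to see that the two $\FF_2[X]/(X^2)$-module structures coincide: the action of $X$ (via $q$) commutes with the action of $W$ (via $p$) on $\KhCx(L)$ — both are multiplication in the Frobenius algebra, on circles that may be distinct or may coincide — so the $X$-action descends to the quotient, and comparison of the defining formulas shows the descended action is the one on $\rKhCx(L)$.

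For the second assertion, the freeness of $\KhCx(L)$ over $\FF_2[W]/(W^2)$ is exactly what was observed above. Viewing $\KhCx(L)$ as a strictly unital (honest, hence $\Ainf$) bimodule over $(\FF_2[W]/(W^2),\FF_2[X]/(X^2))$ which is free — in particular flat — on the $\FF_2[W]/(W^2)$ side, its $\Ainf$ (equivalently, derived) tensor product with the module $\FF_2$ is computed by the underived tensor product, which by the first part is $\rKhCx(L)$ (up to the grading shift). Finally, by homological perturbation theory (Proposition~\ref{prop:hpt}, in the bimodule form stated there) there is an $\Ainf$ bimodule structure on $\Kh(L)=H_*(\KhCx(L))$ and an $\Ainf$ quasi-isomorphism $\KhCx(L)\to\Kh(L)$; since the $\Ainf$ tensor product is invariant under $\Ainf$ quasi-isomorphism in either variable \cite{LOT-hf-bimodules}, we get
\[
  \Kh(L)\otimes^{\Ainf}_{\FF_2[W]/(W^2)}\FF_2 \;\simeq\; \KhCx(L)\otimes^{\Ainf}_{\FF_2[W]/(W^2)}\FF_2 \;\simeq\; \rKhCx(L),
\]
which is the claim.

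I do not expect a serious obstacle: the content is bookkeeping with the (reduced) Khovanov complex plus a standard flatness argument. The point that deserves the most care is the very last check of the first part — confirming that the $\FF_2[X]/(X^2)$-structure on $\rKhCx(L)$ defined directly via $q$ agrees with the one induced on $\KhCx(L)/W\KhCx(L)$, including the degenerate configuration in which $p$ and $q$ lie on a common circle of some resolution — and, for the second part, invoking the correct invariance statement for the $\Ainf$ tensor product rather than re-deriving it.
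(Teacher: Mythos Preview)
Your proposal is correct and follows essentially the same approach as the paper. The paper's proof is extremely terse---``The first statement is immediate from the definitions. The second follows from the facts that the $\Ainf$ tensor product is invariant under $\Ainf$ homotopy equivalence and that any $\Ainf$ (bi)module is quasi-isomorphic to its homology (Proposition~\ref{prop:hpt})''---and you have simply unpacked what ``immediate from the definitions'' means and made the freeness/flatness step explicit.
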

\begin{proof}
  The first statement is immediate from the definitions. The second
  follows from the facts that the $\Ainf$ tensor product is invariant
  under $\Ainf$ homotopy equivalence and that any $\Ainf$-(bi)module is
  $\Ainf$ homotopy equivalent to its homology (Proposition~\ref{prop:hpt}).
\end{proof}

\begin{theorem}\label{thm:Kh-mod-well-defd}
  Let $L$ be a link and $p,p',q,q'$ points on $L$ so that $p,p'$ lie
  on the same component of $L$ and $q,q'$ lie on the same component of
  $L$.
  \begin{enumerate}[leftmargin=*]
  \item Write $\KhCx(L)_{p,q}$ (respectively $\KhCx(L)_{p',q'}$) for
    the Khovanov complex $\KhCx(L)$ viewed as a module over
    $\FF_2[W,X]/(W^2,X^2)$ via the basepoints $p,q$ (respectively
    $p',q'$). Then, $\KhCx(L)_{p,q}$ is quasi-isomorphic to
    $\KhCx(L)_{p',q'}$.
  \item Write $\rKhCx(L)_{p,q}$ (respectively $\KhCx(L)_{p',q'}$) for
    the reduced Khovanov complex $\KhCx(L)$, reduced at $p$
    (respectively $p'$), and viewed as a module over $\FF_2[X]/(X^2)$
    via the basepoint $q$ (respectively $q'$). Then, $\rKhCx(L)_{p,q}$
    is quasi-isomorphic to $\rKhCx(L)_{p',q'}$.
  \end{enumerate}
\end{theorem}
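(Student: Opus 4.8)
The plan is to deduce part~(2) from part~(1). By Lemma~\ref{lem:reduce-Kh}, $\rKhCx(L)_{p,q}$ is obtained from $\KhCx(L)_{p,q}$ by tensoring over $\FF_2[W]/(W^2)$ with $\FF_2$, and $\KhCx(L)$ is free over $\FF_2[W]/(W^2)$; since tensoring a quasi-isomorphism between bounded complexes of free $\FF_2[W]/(W^2)$-modules with $\FF_2$ again yields a quasi-isomorphism (the mapping cone is a bounded acyclic complex of free modules, hence contractible), any $\FF_2[W,X]/(W^2,X^2)$-module quasi-isomorphism $\KhCx(L)_{p,q}\simeq\KhCx(L)_{p',q'}$ furnished by part~(1) descends to $\rKhCx(L)_{p,q}\simeq\rKhCx(L)_{p',q'}$. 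So it remains to prove part~(1).

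For part~(1) I would run Khovanov's invariance argument equivariantly, reducing to two elementary moves: (a) a Reidemeister move performed in a disk of the diagram disjoint from $p$ and $q$; and (b) sliding $p$ along its component past a single crossing, with $q$ fixed (and symmetrically with the roles of $p$ and $q$ exchanged). These suffice by a connectivity argument: two diagrams of $L$ are related by planar isotopy and Reidemeister moves, planar isotopy does not affect the module structure, and before each Reidemeister move one may use moves of type~(b) to slide $p$ and $q$ out of the small disk supporting the move and afterward slide them back to the desired positions---each basepoint remaining on its original component throughout, so the hypothesis that $p,p'$ (resp.\ $q,q'$) lie on one component is respected. For~(a), one checks that Khovanov's chain homotopy equivalences are maps of $\FF_2[W,X]/(W^2,X^2)$-modules whenever $p$ and $q$ avoid the disk where the move occurs: these equivalences are assembled from the elementary cobordism maps---multiplication, comultiplication, unit, and counit of the Frobenius algebra $\FF_2[X]/(X^2)$---applied to circles of the resolutions, and each such map is a module map over the Frobenius algebra of any circle it does not meet, hence commutes with the $W$- and $X$-actions on the circles carrying $p$ and $q$; thus they give quasi-isomorphisms of $\FF_2[W,X]/(W^2,X^2)$-modules.

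The main obstacle is~(b): constructing a quasi-isomorphism of $\FF_2[X]/(X^2)$-modules between $\KhCx(L)$ with the action at $p_-$ (just before the crossing $c$) and $\KhCx(L)$ with the action at $p_+$ (just after $c$); the $W$-action is untouched and plays no role. I would handle this by a local analysis at $c$: write $\KhCx(L)$ as the mapping cone of the saddle cobordism map $\KhCx(L_0)\to\KhCx(L_1)$ between the two resolutions of $c$---which is a module map for both the $p_-$- and the $p_+$-actions, by the Frobenius-module observation in~(a)---and then produce an explicit chain homotopy, supported near $c$, intertwining the two module structures and supplying the higher $\Ainf$ structure maps; in the Bar-Natan dotted-cobordism picture this is the neck-cutting relation applied to the cylinder over a short arc through $c$. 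It should be stressed why the soft observation alone---that the actions at $p_-$ and $p_+$ are chain-homotopic endomorphisms of $\KhCx(L)$---does not suffice: homotopic square-zero operators need not yield quasi-isomorphic differential $\FF_2[X]/(X^2)$-modules, so one genuinely must exhibit the full $\Ainf$ morphism (equivalently, an honest module quasi-isomorphism), which is exactly what the local homotopy provides. One could alternatively organize part~(1) via tangle invariants and bimodules over arc algebras, where the basepoint action is intrinsic and invariance becomes a relation among bimodule maps, but the direct argument above is self-contained given Section~\ref{sec:alg-back}.
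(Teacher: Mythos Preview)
Your strategy matches the paper's: reduce to sliding one basepoint past a single crossing, build an $A_\infty$ bimodule morphism with $f_{0,1,0}=\Id$ and a nontrivial homotopy component, then deduce part~(2) from part~(1) via Lemma~\ref{lem:reduce-Kh}. Two points are worth sharpening.

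First, the paper is more concrete than your sketch of~(b), and the added precision matters. The morphism has \emph{only} $f_{0,1,0}=\Id$ and $f_{0,1,1}(\cdot,X)$ nonzero, with $f_{0,1,1}$ given by running the edge differential at the distinguished crossing $C$ \emph{backwards}; all $f_{m,1,n}$ with $m>0$ or $n>1$ vanish. This leaves exactly five $A_\infty$ relations to check (Equations~\eqref{eq:Kh-Ainf-1}--\eqref{eq:Kh-Ainf-4}), and the nontrivial one, Equation~\eqref{eq:Kh-Ainf-4}, is a short direct computation in the Frobenius algebra. Your ``higher $A_\infty$ structure maps'' are not needed. (Also, Reidemeister invariance is not part of this theorem---that is Corollary~\ref{cor:Kh-invariance}---so your case~(a) is extra.)

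Second, your deduction of part~(2) has a small gap. You invoke ``tensoring a quasi-isomorphism between bounded complexes of free $\FF_2[W]/(W^2)$-modules with $\FF_2$,'' but what part~(1) produces is an $A_\infty$ bimodule quasi-isomorphism, not an honest $\FF_2[W,X]/(W^2,X^2)$-module map: the identity $f_{0,1,0}$ does \emph{not} intertwine the two $X$-actions. The paper closes this by using the second clause of Lemma~\ref{lem:reduce-Kh}: $\rKhCx(L)$ is the $A_\infty$ tensor product $\Kh(L)\otimes^{A_\infty}_{\FF_2[W]/(W^2)}\FF_2$, and $A_\infty$ tensor products are invariant under $A_\infty$ quasi-isomorphisms. (Alternatively, one can observe that the specific $f$ constructed has $f_{m,1,n}=0$ for $m>0$, hence is genuinely $\FF_2[W]/(W^2)$-linear, so it descends to the quotient; but your phrasing does not say this.)
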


The fact that the Khovanov homology has a well-defined
structure of a bimodule appears in Hedden-Ni~\cite[Proposition
1]{HN-kh-detects}, via a similar argument. The fact that the
action of $\ZZ[X]/(X^2)$ at a single basepoint is well-defined is due
to Khovanov~\cite[Section 3]{Kho-kh-patterns}, by a different argument
which apparently does not generalize.

\begin{proof}
  For the first half of the theorem, by Proposition~\ref{prop:qi-is-he}, it
  suffices to show that moving $q$ past a crossing $C$ gives an $\Ainf$
  quasi-isomorphic bimodule over $\FF_2[W]/(W^2)$ and
  $\FF_2[X]/(X^2)$. An $\Ainf$-bimodule map from $M$ to $N$ consists of maps
  \[
    f_{m,1,n}\co (\FF_2[W]/(W^2))^{\otimes m}\otimes M\otimes
    (\FF_2[X]/(X^2))^{\otimes n}\to N.
  \]
  Our quasi-isomorphism $f$ will have $f_{0,1,0}=\Id$ and
  $f_{m,1,n}=0$ if $m>0$ or $n>1$. To define $f_{0,1,1}$, we need some
  more notation.

  Write a generator of the Khovanov complex as a pair $(v,x)$ where
  $v\in\{0,1\}^c$ (where $c$ is the number of crossings of $L$) and $x$ is a labeling of the circles of the $v$-resolution $L_v$ by $1$
  or $X$. That is, $x\in \{1,X\}^{\pi_0(L_v)}$. Let $|v|=\sum v$ be
  the height of the vertex $v$, and let $\leq$ denote the partial
  order on the cube $\{0,1\}^c$. Given $v,w$ with $|v|-|w|=\pm 1$ and
  $v<w$ or $w<v$;
  $x\in \{1,X\}^{\pi_0(L_v)}$; and $y\in \{1,X\}^{\pi_0(L_w)}$, let
  $n_{x,y}$ be $1$ if the following conditions are satisfied:
  \begin{itemize}[leftmargin=*]
  \item For $Z\in L_v\cap L_w$, $x(Z)=y(Z)$;
  \item If there are two circles $Z_1,Z_2$ in $L_v$ which are merged
    into a circle $Z$ in $L_w$ then $y(Z)=x(Z_1)x(Z_2)$ (where the
    multiplication is in $\FF_2[X]/(X^2)$); and
  \item If there is one circle $Z$ in $L_v$ which is split into two
    circles $Z_1,Z_2$ in $L_w$ then $y(Z_1)\otimes
    y(Z_2)=\Delta(x(Z))$ where $\Delta\co \FF_2[X]/(X^2)\to
    \FF_2[X]/(X^2)\otimes \FF_2[X]/(X^2)$ is the comultiplication
    \[
      \Delta(1)=1\otimes X+X\otimes 1\qquad\qquad \Delta(X)=X\otimes X.
    \]
  \end{itemize}
  Define $n_{x,y}=0$ otherwise. Then, the Khovanov differential is
  \[
    \delta(x,v)=\sum_{(y,w)\mid w\geq v,\ |v|+1=|w|} n_{x,y} \cdot (y,w).
  \]

  Define the map $f_{0,1,1}$ by
  \[
    f_{0,1,1}((y,w),X)=\sum_{(x,v)\mid w\geq v,\
      |v|+1=|w|,\ v(C)\neq w(C)}n_{y,x}\cdot (x,v).
  \]
  Equivalently, $f_{0,1,1}$ comes from performing the differential at
  $C$ backwards. That is, if $\overline{C}$ is the opposite crossing
  to $C$ then $f_{0,1,1}$ is the part of the differential associated
  to changing $\overline{C}$. (Compare~\cite[Lemma
  2.3]{HN-kh-detects},~\cite[Section 2.2]{BS-kh-splitting}.) See
  Figures~\ref{fig:bp-move-map} and~\ref{fig:bp-move-map-2}.

  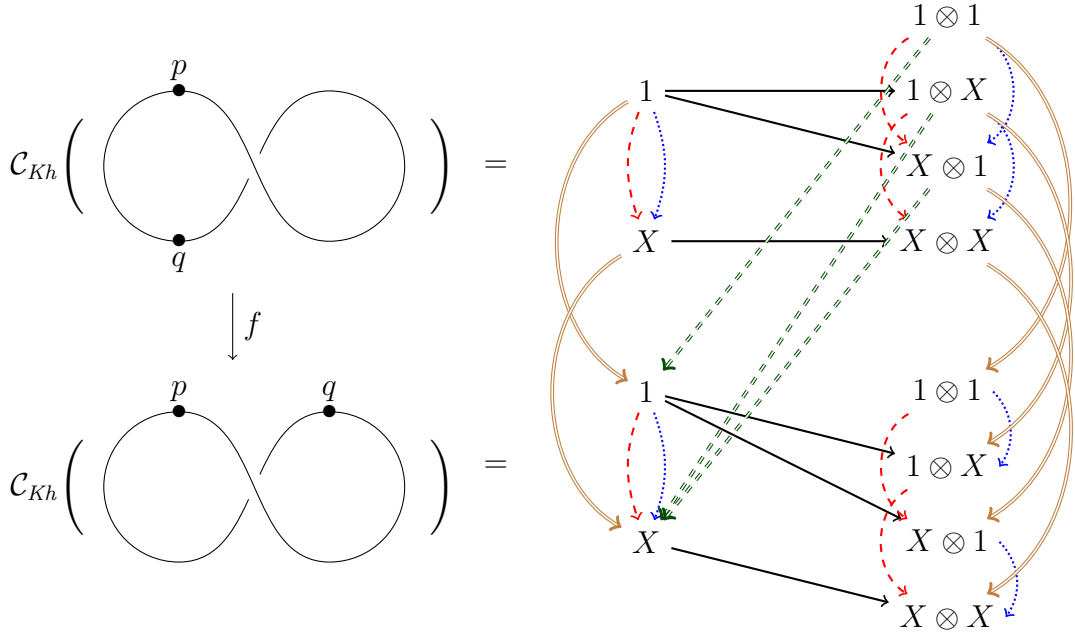
\begin{figure}
    \centering
    \begin{tikzpicture}

      \foreach \name/\sname/\shift/\sshift in {topunknot/t/0/0,bottomunknot/b/-4/-1}{
        \begin{scope}[yshift=\shift cm]

          \node at (-3.5,0) (\name) {
            \begin{tikzpicture}
              \node at (-2.75,0) (CKh1) {$\KhCx\Bigg($};
              \draw (1,-1) to[out=180,in=0] (-1,1);
              \fill[white] (0,0) circle (1ex);
              \draw (-1,1) arc(90:270:1) (-1,-1) to[out=0,in=180] (1,1) arc(90:-90:1);
              \node at (2.5,0) (rparen1) {$\Bigg)$};

              \node at (-1,1) {$\bullet$};  
              \node[anchor=south] at (-1,1) {$p$};

              \if\sname t
              \node at (-1,-1) {$\bullet$};
              \node[anchor=north] at (-1,-1) {$q$};
              \else
              \node at (1,1) {$\bullet$};
              \node[anchor=south] at (1,1) {$q$};
              \fi

            \end{tikzpicture}
          };
          
          \node at (0,0) (equals1) {$=$};

          \node at (6,1) (\sname 1) {$1$}; 
          \node at (6,-1) (\sname X) {$X$}; 

          \begin{scope}[yshift=\sshift cm]
            \node at (2,2) (\sname 11) {$1\otimes 1$};
            \node at (2,1) (\sname 1X) {$1\otimes X$};
            \node at (2,0) (\sname X1) {$X\otimes 1$};
            \node at (2,-1) (\sname XX) {$X\otimes X$};
          \end{scope}

          \draw[diffa] (\sname 11) to (\sname 1);
          \draw[diffa] (\sname 1X) to (\sname X);
          \draw[diffa] (\sname X1) to (\sname X);

          \draw[Wa, bend right=20] (\sname 1) to (\sname X);
          \draw[Wa, bend right=60] (\sname 11) to (\sname X1);
          \draw[Wa, bend right=60] (\sname 1X) to (\sname XX);

          \draw[Xa, bend left=20] (\sname 1) to (\sname X);

          \if\sname t
          \draw[Xa, bend left=60] (t11) to (tX1);
          \draw[Xa, bend left=60] (t1X) to (tXX);
          \else
          \draw[Xa, bend left=60] (b11.east) to (b1X.east);
          \draw[Xa, bend left=60] (bX1.east) to (bXX.east);
          \fi

        \end{scope}}

      \draw[->] (topunknot) to node[right]{$f$} (bottomunknot);

      \draw[f1a, bend right=60] (t11) to (b11);
      \draw[f1a, bend right=60] (t1X) to (b1X);
      \draw[f1a, bend right=60] (tX1) to (bX1);
      \draw[f1a, bend right=60] (tXX) to (bXX);
      \draw[f1a, bend left=60] (t1) to (b1);
      \draw[f1a, bend left=60] (tX) to (bX);
      \draw[f2a] (t1) to (b1X);
      \draw[f2a] (t1) to (bX1);
      \draw[f2a] (tX) to (bXX);
    \end{tikzpicture}
    \caption{\textbf{Example of the map $f$.} The solid arrows are the
      differential, the {dashed arrows} are the action
      of $W$, the {dotted arrows} are the action of
      $X$, the {double arrows} are the map $f_{0,1,0}$,
      and the {dashed double arrows} are the map
      $f_{0,1,1}(\cdot,X)$. The $\Ainf$ relations correspond to certain ways of
      getting from one vertex to another in two steps.}
    \label{fig:bp-move-map}
  \end{figure}

  \begin{figure}
    \centering
    \begin{tikzpicture}

      \foreach \name/\sname/\shift/\sshift in {topunknot/t/0/0,bottomunknot/b/-4/-1}{
        \begin{scope}[yshift=\shift cm]

          \node (\name) at (-3.5,0) (\name) {
            \begin{tikzpicture}
              \node at (-2.75,0) {$\KhCx\Bigg($};
              \draw (-1,1) arc(90:270:1) (-1,-1) to[out=0,in=180] (1,1) arc(90:-90:1);
              \fill[white] (0,0) circle (1ex);
              \draw (1,-1) to[out=180,in=0] (-1,1);
              \node at (2.5,0) (rparen1) {$\Bigg)$};

              \node at (-1,1) {$\bullet$};  
              \node[anchor=south] at (-1,1) {$p$};

              \if\sname t
              \node at (-1,-1) {$\bullet$};
              \node[anchor=north] at (-1,-1) {$q$};
              \else
              \node at (1,1) {$\bullet$};
              \node[anchor=south] at (1,1) {$q$};
              \fi

            \end{tikzpicture}
          };

          \node at (0,0) (equals1) {$=$};
          
          \node at (2,1) (\sname 1) {$1$}; 
          \node at (2,-1) (\sname X) {$X$}; 

          \begin{scope}[yshift=\sshift cm]
            \node at (6,2) (\sname 11) {$1\otimes 1$};
            \node at (6,1) (\sname 1X) {$1\otimes X$};
            \node at (6,0) (\sname X1) {$X\otimes 1$};
            \node at (6,-1) (\sname XX) {$X\otimes X$};
          \end{scope}

          \draw[diffa] (\sname 1) to (\sname 1X);
          \draw[diffa] (\sname 1) to (\sname X1);
          \draw[diffa] (\sname X) to (\sname XX);

          \draw[Wa, bend right=20] (\sname 1) to (\sname X);
          \draw[Wa, bend right=60] (\sname 11) to (\sname X1);
          \draw[Wa, bend right=60] (\sname 1X) to (\sname XX);

          \draw[Xa, bend left=20] (\sname 1) to (\sname X);
          \if\sname t
          \draw[Xa, bend left=60] (t11) to (tX1);
          \draw[Xa, bend left=60] (t1X) to (tXX);
          \else
          \draw[Xa, bend left=40] (b11.east) to (b1X.east);
          \draw[Xa, bend left=40] (bX1.east) to (bXX.east);
          \fi

        \end{scope}}

      \draw[->] (topunknot) to node[right]{$f$} (bottomunknot);

      \draw[f1a, bend left=60] (t11) to (b11);
      \draw[f1a, bend left=60] (t1X) to (b1X);
      \draw[f1a, bend left=60] (tX1) to (bX1);
      \draw[f1a, bend left=60] (tXX) to (bXX);
      \draw[f1a, bend right=60] (t1) to (b1);
      \draw[f1a, bend right=60] (tX) to (bX);
      \draw[f2a] (t11) to (b1);
      \draw[f2a] (t1X) to (bX);
      \draw[f2a] (tX1) to (bX);
    \end{tikzpicture}
    \caption{\textbf{A second example of the map $f$.} Notation is
    the same as in Figure~\ref{fig:bp-move-map}.}
    \label{fig:bp-move-map-2}
  \end{figure}
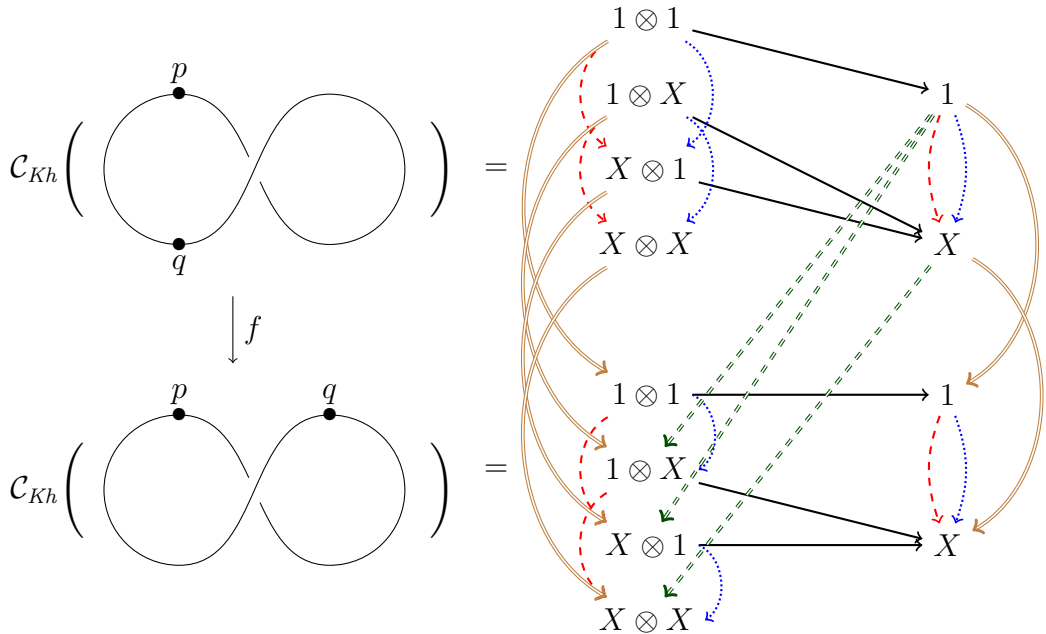
  
  The nontrivial $\Ainf$ relations to verify are the following:
  \begin{align}
    \mu_{0,1,0}(f_{0,1,0}((v,x)))+f_{0,1,0}(\mu_{0,1,0}((v,x)))&=0\label{eq:Kh-Ainf-1}\\
      \mu_{1,1,0}(W,f_{0,1,0}((v,x)))+f_{0,1,0}(\mu_{1,1,0}(W,(v,x)))&=0\label{eq:Kh-Ainf-2}\\
    \begin{split}
      \mu_{0,1,1}(f_{0,1,0}((v,x)),X)+f_{0,1,0}(\mu_{0,1,1}((v,x),X))\qquad&\\+\mu_{0,1,0}(f_{0,1,1}((v,x),X))+f_{0,1,1}(\mu_{0,1,0}((v,x)),X)&=0
    \end{split}
\label{eq:Kh-Ainf-3}\\
    \mu_{1,1,0}(W,f_{0,1,1}((v,x),X))+f_{0,1,1}(\mu_{1,1,0}(W,(v,x)),X)&=0\label{eq:Kh-Ainf-5}\\
    \mu_{0,1,1}(f_{0,1,1}((v,x),X),X)+f_{0,1,1}(\mu_{0,1,1}((v,x),X),X)&=0\label{eq:Kh-Ainf-4}
  \end{align}
  All other $\Ainf$ relations are automatically satisfied because all
  of the terms vanish. In these equations, we have dropped terms
  which automatically vanish, like
  $\mu_{0,1,0}(f_{1,1,0}(W,(v,x)))$, to keep the expressions
  shorter.

  Since $f_{0,1,0}$ is the identity map,
  Equations~\eqref{eq:Kh-Ainf-1} and~\eqref{eq:Kh-Ainf-2} are
  obviously satisfied. Equation~\eqref{eq:Kh-Ainf-3} was checked in
  Hedden-Ni's paper
  \cite[Equation~(1)]{HN-kh-detects}.  For
  Equation~\eqref{eq:Kh-Ainf-5}, let $L'$ be the result of replacing
  the crossing $C$ in $L$ by the opposite crossing $C'$; then, the
  equation follows from the fact that the differential on the Khovanov
  complex of $L'$ respects the $\FF_2[W]/(W^2)$-module structure. 

  For Equation~\eqref{eq:Kh-Ainf-4}, consider the coefficient of
  $(w,y)$. For the coefficient to be non-zero, $v$ must be
  obtained from $w$ by changing the entry corresponding to $C$ from
  $0$ to $1$. Suppose first that two circles $Z_1,Z_2$ in $L_w$ merge
  into one circle $Z$ in $L_v$.  Note that one circle, say $Z_1$, must
  contain $q$ and the other circle $Z_2$ must contain $q'$. Then,
  recording only the labels of the circles $Z_1$, $Z_2$, and $Z$,
  $\mu_{0,1,1}(f_{0,1,1}((v,x),X),X)$ is given by
  \begin{align*}
    1 &\mapsto 1\otimes X+X\otimes 1 \mapsto X\otimes X\\
    X &\mapsto X\otimes X\mapsto 0,
  \end{align*}
  while $f_{0,1,1}(\mu_{0,1,1}((v,x),X),X)$ is given by
  \begin{align*}
    1 &\mapsto X \mapsto X\otimes X\\
    X &\mapsto 0 \mapsto 0.
  \end{align*}
  So, these two terms cancel. Next, suppose that one circle $Z$ in
  $L_w$ splits into two circles $Z_1,Z_2$ in $L_v$, where $Z_1$
  contains $q$ and $Z_2$ contains $q'$. Then,
  $\mu_{0,1,1}(f_{0,1,1}((v,x),X),X)$ is given by
  \begin{align*}
    1\otimes 1 & \mapsto 1\mapsto X &
    1\otimes X & \mapsto X\mapsto 0 \\
    X\otimes 1 & \mapsto X\mapsto 0 &
    X\otimes X & \mapsto 0\mapsto 0\\
  \shortintertext{while  $f_{0,1,1}(\mu_{0,1,1}((v,x),X),X)$ is given by}
    1\otimes 1 & \mapsto X\otimes 1\mapsto X &
    1\otimes X & \mapsto X\otimes X\mapsto 0\\
    X\otimes 1 & \mapsto 0\mapsto 0 &
    X\otimes X & \mapsto 0\mapsto 0.
  \end{align*}
  So, again, these two terms cancel. These two cases are illustrated
  in Figures~\ref{fig:bp-move-map} and \ref{fig:bp-move-map-2}. (An
  alternative way to view this identity is as follows: The map
  $f_{0,1,1}\from \KhCx(L_v)\to\KhCx(L_w)$ is induced by an elementary
  saddle cobordism, while the $X$-actions on $\KhCx(L_v)$ and
  $\KhCx(L_w)$ are induced by identity cobordisms decorated with a
  single `dot'; then, either term of Equation~\eqref{eq:Kh-Ainf-4}
  corresponds to an elementary saddle cobordism decorated with a
  single dot on the saddle component.)
  
  This proves the first half of the theorem.
  The second half of the theorem follows from the first and
  Lemma~\ref{lem:reduce-Kh} (and the fact that the $\Ainf$ tensor
  product is invariant under $\Ainf$ quasi-isomorphisms).
\end{proof}

\begin{corollary}\label{cor:Kh-invariance}
  Let $L$ be a link and $p,p',q,q'$ points on $L$ so that $p,p'$ lie
  on the same component of $L$ and $q,q'$ lie on the same component of
  $L$.  Up to $\Ainf$-isomorphism, the $\Ainf$-modules $\Kh(L)_p$ and
  $\Kh(L)_{p'}$ (respectively $\rKh(L)_{p,q}$ and $\rKh(L)_{p',q'}$)
  over $\FF_2[X]/(X^2)$ are isomorphic. In fact, the isomorphism
  classes of these $\Ainf$-modules are invariants of the isotopy
  classes of the triple $(L,p)$ and $(L,p,q)$, respectively.
\end{corollary}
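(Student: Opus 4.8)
The plan is to deduce the corollary from \Theorem{Kh-mod-well-defd} together with the (standard) fact that reduced and unreduced Khovanov homology are link invariants, and then to upgrade from quasi-isomorphism to strict $\Ainf$-isomorphism by passing to homology. The first step is to observe that the basepoint actions are preserved by the Reidemeister-invariance maps: the action of $X$ at $q$ (and of $W$ at $p$) on $\KhCx(L)$ is induced by the identity cobordism of $L$ carrying a single dot in a small ball around $q$ (resp.\ $p$), while the chain homotopy equivalences realizing the Reidemeister moves are supported in balls disjoint from $p$ and $q$; by locality of the cube-of-resolutions TQFT construction these maps commute, at worst up to $\Ainf$-homotopy, so the Reidemeister equivalences are quasi-isomorphisms of $\FF_2[W,X]/(W^2,X^2)$-modules on $\KhCx(L)$ and of $\FF_2[X]/(X^2)$-modules on $\rKhCx(L)$ (cf.\ Hedden-Ni~\cite[Proposition 1]{HN-kh-detects}). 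Sliding a basepoint within a complementary region of a fixed diagram does not change the complex at all, and sliding it past a crossing is \Theorem{Kh-mod-well-defd}; since any two admissible positions of $p$ (resp.\ $q$) on a fixed diagram, on the prescribed component, are related by a finite sequence of such moves, the quasi-isomorphism type of $\KhCx(L)_{p,q}$ over $\FF_2[W,X]/(W^2,X^2)$, of $\KhCx(L)_p$ over $\FF_2[X]/(X^2)$ (by restriction of scalars), and of $\rKhCx(L)_{p,q}$ over $\FF_2[X]/(X^2)$ depends only on the isotopy class of the marked link.

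The second step is to transport everything to homology. By homological perturbation theory (\Proposition{hpt}), applied to the identity quasi-isomorphism from $\rKhCx(L)_{p,q}$ onto its homology, the $\FF_2$-vector space $\rKh(L)_{p,q}$ inherits a strictly unital $\Ainf$-module structure over $\FF_2[X]/(X^2)$ with $\mu_1 = 0$, together with an $\Ainf$ quasi-isomorphism $\rKhCx(L)_{p,q} \to \rKh(L)_{p,q}$; the same holds for the primed basepoints and for the unreduced complexes. By \Proposition{qi-is-he}, the quasi-isomorphism $\rKhCx(L)_{p,q} \simeq \rKhCx(L)_{p',q'}$ of the previous step is an $\Ainf$ quasi-isomorphism, and composing it with the two $\Ainf$ quasi-isomorphisms to homology yields an $\Ainf$ quasi-isomorphism $g\colon \rKh(L)_{p,q} \to \rKh(L)_{p',q'}$ (and, identically, one relating $\Kh(L)_p$ and $\Kh(L)_{p'}$).

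The final step is to upgrade $g$ to a strict $\Ainf$-isomorphism. Since $\mu_1$ vanishes on both source and target, $g_1$ is a chain map between complexes with zero differential inducing an isomorphism on homology, hence $g_1$ is itself an isomorphism of $\FF_2$-vector spaces; and an $\Ainf$-module morphism whose linear part $g_1$ is invertible admits a two-sided $\Ainf$-inverse, built component by component by solving the $\Ainf$-morphism equations for the inverse (the usual uniqueness-of-minimal-models argument; see, e.g.,~\cite{Keller-other-intro}). This produces the asserted $\Ainf$-isomorphisms, and combined with the invariance established in the first step it gives that their isomorphism classes are isotopy invariants of $(L,p)$ and $(L,p,q)$. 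The only genuine obstacle is the first step — arranging that the Reidemeister homotopy equivalences commute with the dot-at-a-basepoint maps up to $\Ainf$-homotopy, which rests on the locality of the Khovanov TQFT; once that is in place, the passage to $\Ainf$-homology and the upgrade to a strict isomorphism are formal.
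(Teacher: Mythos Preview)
Your proposal is correct and follows essentially the same approach as the paper: deduce chain-level quasi-isomorphism invariance from \Theorem{Kh-mod-well-defd} together with Reidemeister invariance away from the basepoints, then pass to homology via homological perturbation theory. The paper's proof is terser --- it simply cites Khovanov's original invariance for Reidemeister moves disjoint from the basepoints (where the chain equivalences are honest module maps, not merely up to $\Ainf$-homotopy) and observes that moving a strand across a basepoint is the same as moving the basepoint, already handled by the first half --- while you spell out the locality argument and the upgrade from quasi-isomorphism to strict $\Ainf$-isomorphism on homology in more detail; the content is the same.
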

\begin{proof}
  The first statement is immediate from
  Theorem~\ref{thm:Kh-mod-well-defd} and homological perturbation
  theory. For the second statement, it suffices to verify invariance
  under Reidemeister moves disjoint from the basepoints and moving a
  strand across a basepoint. Invariance under Reidemeister moves
  disjoint from the basepoints was proved by
  Khovanov~\cite{Kho-kh-categorification}, and invariance under moving
  a strand across a basepoint is a special case of the first half of
  the corollary.
\end{proof}

\subsection{The Ozsv\'ath-Szab\'o spectral sequence respects the \texorpdfstring{$\Ainf$-}{A-infinity }module structure}
\label{sec:ss-respect}
Let $L$ be a link in $S^3$ and $p,q\in L$. Choose an arc
$\gamma\subset S^3\setminus L$ from $p$ to $q$. The preimage
$\zeta\subset \Sigma(L)$ of $\gamma$ is a simple closed curve,
representing an element of $H_1(\Sigma(L))$. The homology class
represented by $\zeta$ is independent of the choice of $\gamma$ since
isotoping $\gamma$ across $L$ changes $\zeta$ by the preimage of a
meridian of $L$, which bounds a disk in $\Sigma(L)$.

The homology class $[\zeta]$ makes $\CFa(\Sigma(L);\FF_2)$ into a
module over $\FF_2[X]/(X^2)$, as described in
Section~\ref{sec:HF-module}. (Of course, if
$[\zeta]\in H_1(\Sigma(L))$ is torsion---for example, if $p$ and $q$
lie on the same component of $L$ or if $\Sigma(L)$ is a rational
homology sphere---then this module structure is trivial.)

In the following proposition, by a \emph{filtration} we mean a
descending filtration, i.e., a sequence of submodules
$C=F^0\supset F^1\supset F^2\supset\cdots$.
\begin{proposition}\label{prop:ss-resp-X}
  Let $L$ be a link in $S^3$ and $p,q$ points on $L$. There is an
  ungraded, filtered $\Ainf$-module $(C,\{\mu_n\})$ over
  $\FF_2[X]/(X^2)$ with the following properties:
  \begin{enumerate}[label=(\arabic*),leftmargin=*]
  \item Forgetting the filtration, $C$ is quasi-isomorphic to
    $\CFa(\Sigma(L))$ as an $\Ainf$-module over $\FF_2[X]/(X^2)$.
  \item The differential $\mu_1$ on $C$ strictly increases the filtration.
  \item There is an isomorphism of $\FF_2$-modules
    $g\co C\stackrel{\cong}{\lra} \rKhCx(m(L))$, taking the filtration
    on $C$ to the homological grading on $\rKhCx(m(L))$.
  \item\label{item:mu1-first-order} To first order, $\mu_1$ agrees with the Khovanov
    differential. That is,
    \[
      \mu_1-g^{-1}\circ \bdy_{\KhCx}\circ g
    \]
    increases the filtration by at least $2$.
  \item\label{item:mu2-zeroth-order} To zeroth order, the operation
    $\mu_2(\cdot,X)$ on $C$ agrees with the action of $X$ on the
    Khovanov complex. That is,
    \[
      y\mapsto \mu_2(y,X) - g^{-1}(g(y)\cdot X)
    \]
    increases the filtration by at least $1$.  (Note that the Khovanov
    multiplication actually respects the homological grading.)
  \end{enumerate}
\end{proposition}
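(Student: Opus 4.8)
The plan is to build $C$ by equipping the Ozsv\'ath--Szab\'o filtered complex with the $\zeta$-twisted $\Ainf$-module structure of \Section{HF-module} and then reducing, $\Ainf$-equivariantly, to the $E_1$-page. First I would recall the Ozsv\'ath--Szab\'o construction~\cite{OSz-hf-dcov}: from a connected diagram $D$ of $m(L)$ with $c$ crossings and its cube of resolutions $\{D_v\}_{v\in\{0,1\}^c}$ one obtains a pointed Heegaard diagram $\HD$ for $\Sigma(L)$ whose generators split as $\coprod_v\{\text{generators of }\CFa(\Sigma(D_v))\}$, where $\Sigma(D_v)\cong\#^{|D_v|-1}(S^1\times S^2)$; filtering $\CFa(\HD)$ by the cube grading $|v|$ produces a filtered complex $(\mathcal{C},\bdy_\mathcal{C})$ whose $E_0$-differential is the sum of the internal differentials of the $\CFa(\Sigma(D_v))$, whose $E_1$-page is the reduced Khovanov complex $\rKhCx(m(L))$ (so the $d_1$-differential is $\bdy_{\KhCx}$), and whose total homology is $\HFa(\Sigma(L))$. (If one prefers to present this via an iterated mapping cone instead of a single Heegaard diagram, one must additionally check that the triangle maps respect the point constraints; I would use the single-diagram version to sidestep this.)

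Second I would fix $\zeta$. Choose an arc $\gamma\subset S^3\setminus L$ from $p$ to $q$ in generic position with respect to $D$; isotoping $\gamma$ across $L$ (which does not change $[\zeta]\in H_1(\Sigma(L))$) I may assume $\gamma$ is supported away from all crossings, so its preimage is realized on $\Sigma$ as a multicurve $\zeta$ with $\zeta\pitchfork\alphas$, $\zeta\cap\alphas\cap\betas=\emptyset$, and $\zeta$ disjoint from every $\beta$-curve occurring in $\HD$. With a choice of small perturbations as in \Section{holomorphic}, this makes $\mathcal{C}$ a strictly unital $\Ainf$-module over $\FF_2[X]/(X^2)$, which by \Theorem{CFa-Ainf-well-defd} agrees, forgetting the filtration, with the $\zeta$-induced structure on $\CFa(\Sigma(L))$. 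By \Lemma{HF-ainf-formula}, $\mu_{1+n}(x,X,\dots,X)=\sum_{\phi}\binom{\zeta\cdot\bdy_\alpha D(\phi)}{n}(\#\cM^\phi(x,y))\,y$; since the nonzero $\cM^\phi$ run from a vertex $v$ to a vertex $w\geq v$, each $\mu_{1+n}$ is filtration non-decreasing, the filtration-preserving part of $\mu_1$ is the sum of internal differentials, and the filtration-preserving part of $\mu_2(\cdot,X)$ is, vertex by vertex, the action of the class $[\zeta]\in H_1(\Sigma(D_v))$ on $\CFa(\Sigma(D_v))$.

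Third I would reduce to the $E_1$-page. Split each $\CFa(\Sigma(D_v))$, as an $\FF_2$-chain complex, as $\HFa(\Sigma(D_v))\oplus(\text{acyclic})$ (\Proposition{PID-formal}, or Gaussian elimination over $\FF_2$), and cancel the acyclic summands throughout $\mathcal{C}$; this is a filtered homotopy equivalence onto a complex $C$ whose underlying $\FF_2$-module is $\bigoplus_v\HFa(\Sigma(D_v))=\rKhCx(m(L))$, with filtration equal to the homological grading. Transferring the $\Ainf$-structure along this filtered homotopy equivalence (a filtered refinement of \Proposition{hpt}) gives $(C,\{\mu_n\})$. Forgetting the filtration, $C\simeq\mathcal{C}\simeq\CFa(\Sigma(L))$ as $\Ainf$-modules, which is~\ref{item:mu1-first-order}'s ambient claim~(1); $\mu_1$ strictly increases the filtration and its $d_1$-part is $\bdy_{\KhCx}$ under $g$, giving~(2) and~\ref{item:mu1-first-order}; the map $g$ is the Ozsv\'ath--Szab\'o identification, giving~(3); and by the transfer formula the filtration-preserving part of $\mu_2(\cdot,X)$ on $C$ is the induced $[\zeta]$-action on $\bigoplus_v\HFa(\Sigma(D_v))$. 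So~\ref{item:mu2-zeroth-order} reduces to identifying, for a single resolution $D_v$, the $[\zeta]$-action on $\HFa(\Sigma(D_v))\cong\HFa(\#^{|D_v|-1}(S^1\times S^2))\cong\Lambda^*H_1$ with the Khovanov $X$-action at $q$ on $\rKhCx(D_v)$ under $g$.

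That last identification is the main obstacle: it requires pinning down the $\Lambda^*H_1$-module structure on $\HFa(\#^c(S^1\times S^2))$ in the torsion $\SpinC$-structure under the Ozsv\'ath--Szab\'o identification with reduced Khovanov homology of the unlink, and then checking circle by circle that the class $[\zeta]$ (the preimage of $\gamma$ in $\Sigma(D_v)$, which is null-homologous when $p,q$ lie on the same circle of $D_v$ and otherwise pairs nontrivially precisely with the $S^1\times S^2$ summands of the circles through $p$ and through $q$) acts as the $X$-action at $q$ on the reduced Khovanov module of the unlink --- a direct computation in the spirit of Hedden--Ni~\cite{HN-kh-detects}, carried out one resolution at a time. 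A secondary technical point is setting up the filtered homological perturbation lemma for $\Ainf$-modules and checking that it preserves the leading terms of $\mu_1$ and $\mu_2$; this is routine, since the cancelled subcomplex is filtration-internal and the cancelling homotopy raises the filtration, so every tree-summand in the transfer formula raises the filtration by at least as much as its inputs and all sums converge.
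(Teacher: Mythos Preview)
The main gap is in your first step. There is no single pointed Heegaard diagram $\HD$ for $\Sigma(L)$ whose Floer chain complex splits over the cube as you claim; the Ozsv\'ath--Szab\'o construction~\cite{OSz-hf-dcov} produces a Heegaard \emph{multi}-diagram $(\Sigma,\alphas,\{\betas^I\}_{I\in\{0,1,\infty\}^c})$, and the filtered complex $\bigoplus_{I\in\{0,1\}^c}\CFa(\Sigma,\alphas,\betas^I,z)$ has differential given by counting holomorphic \emph{polygons} (bigons, triangles, quadrilaterals, \dots) with $\beta\beta$-corners at the top generators $\Theta$, not bigons in a single diagram. Your parenthetical has it exactly backwards: the iterated-mapping-cone picture is the construction itself, not an alternative presentation, and verifying that the polygon maps are compatible with the $\zeta$ point constraints is precisely the new content of the proposition that cannot be sidestepped. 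In particular Lemma~\ref{lem:HF-ainf-formula}, which only concerns bigons in a fixed diagram, does not apply to the higher polygon maps.

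The paper carries this out by defining $\mu_{1+n}^{I_0<\cdots<I_m}$ as a count of rigid $(m{+}2)$-gons in $(\Sigma,\alphas,\betas^{I_0},\dots,\betas^{I_m},z)$ with $n$ ordered point constraints on the $\alpha$-boundary coming from $\zeta$ and its pushoffs, and then verifying the $\Ainf$ relations by analyzing the ends of the $1$-dimensional polygon moduli spaces: breakings at an $\alpha\beta$-corner give the terms $\mu_{1+i}(\mu_{1+j}(\cdot,X,\dots,X),X,\dots,X)$, breakings into an all-$\beta$ polygon vanish by~\cite[Lemma~4.5]{OSz-hf-dcov}, and collisions of point constraints cancel in pairs as in Lemma~\ref{HF-ainf-well-defd}. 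The quasi-isomorphism with $\CFa(\Sigma(L))$ then follows from the surgery exact triangle, exactly as in the untwisted case. After that point your third step (filtered homological perturbation onto the $E_1$-page) and your identification of the filtration-preserving part of $\mu_2$ with the Khovanov $X$-action do match the paper, which cites Hedden--Ni~\cite[Theorem~4.5]{HN-kh-detects} for the latter rather than redoing the computation.
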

\begin{proof}
  This is essentially Hedden-Ni's refinement~\cite[Theorem
  4.5]{HN-kh-detects} of Ozsv\'ath-Szab\'o's construction of the
  spectral sequence for a branched double cover~\cite[Theorem
  6.3]{OSz-hf-dcov}.  The only additional assertions are that there is
  an $\Ainf$-module structure on $C$ and the quasi-isomorphism
  between $C$ and $\CFa(\Sigma(L))$ extends to an $\Ainf$
  homomorphism. So, we will only explain the additional steps required
  to adapt Hedden-Ni's proof, and will adopt much of their notation
  without re-introducing it.

  Throughout this proof, Floer complexes are with $\FF_2$-coefficients
  (which we suppress from the notation).
  
  Let $c$ be the number of crossings of $L$ and, given
  $I\in \{0,1,\infty\}^c$, let
  \[
    \HD^I=(\Sigma,\alphas,\betas^I,z)
  \]
  be the Heegaard diagram considered by Ozsv\'ath-Szab\'o and
  Hedden-Ni. Fix
  \begin{itemize}[leftmargin=*]
  \item a curve $\zeta$ in $\Sigma$ representing the homology class of a lift of an arc from $p$ to $q$,
  \item small pushoffs $A_2,A_3,\dots$ of $A_1=\zeta\cap\alphas$ as in
    Section~\ref{sec:holomorphic}, and
  \item a collection of sufficiently generic almost complex structures.
  \end{itemize}
  
  Given a sequence $I_0<I_1<\cdots<I_m$ of immediate successors in
  $\{0,1,\infty\}^c$ and an integer $n\geq 0$, define a map
  \[
    \mu_{1+n}^{I_0<\cdots<I_m}(\cdot,\overbrace{X,\cdots,X}^n)\co \CFa(\HD^{I_0})\to \CFa(\HD^{I_m})
  \]
  by counting rigid holomorphic $(m+2)$-gons in the Heegaard multi-diagram
  \[
    (\Sigma,\alphas,\betas^{I_0},\dots,\betas^{I_m},z)
  \]
  with point constraints along the $\alpha$-boundary coming from
  $A_1,\dots,A_n$ and corners at some generator
  $x\in T_\alpha\cap T_{\beta^{I_0}}$, some generator
  $y\in T_\alpha\cap T_{\beta^{I_m}}$, and the top generators
  $\Theta_1,\dots,\Theta_m$ for
  $(\betas^{I_0},\betas^{I_1}),\dots,(\betas^{I_{m-1}},\betas^{I_m})$.

  Let
  \[
    \mu_{1+n}\co \bigoplus_{I\in\{0,1,\infty\}^c}\CFa(\HD^I)\otimes\bigl(\FF_2[X]/(X^2)\bigr)^n\to 
    \bigoplus_{I\in\{0,1,\infty\}^c}\CFa(\HD^I)
  \]
  be
  \[
    \mu_{1+n}(\cdot,X,\dots,X)=\sum_{I_0<\cdots<I_m}\mu_{1+n}^{I_0<\cdots<I_m}(\cdot,\overbrace{X,\cdots,X}^n).
  \]
  Note that in the special case $n=0$, $\mu_1$ is the map $D$
  introduced by Ozsv\'ath-Szab\'o, and in the case $n=1$,
  $\mu_2(\cdot,X)$ is the map $a^\zeta$ introduced by Hedden-Ni.

  We claim that the $\mu_{1+n}$ make
  $M=\bigoplus_{I\in\{0,1,\infty\}^c}\CFa(\HD^I)$ into an $\Ainf$-module. This follows by considering the ends of the $1$-dimensional
  moduli spaces of polygons with point constraints. With notation as in the rigid case discussed above, the ends of the $1$-dimensional moduli spaces are:
  \begin{itemize}[leftmargin=*]
  \item Ends where the polygon degenerates as a polygon on
    $(\alphas,\betas^0,\dots,\betas^{k})$ with $p$ point constraints
    and a polygon on $(\alphas,\betas^k,\dots,\betas^{m})$ with $n-p$
    point constraints. These degenerations correspond to the terms 
    \[
      \mu_{1+n-p}(\mu_{1+p}(\cdot,X,\dots,X),X,\dots,X)
    \]
    in the $\Ainf$ relation.  (This uses the fact that the pushoffs
    $A_i$ are chosen consistently, as in
    Section~\ref{sec:holomorphic}, so that the count of curves
    constrained by $A_1,\dots,A_{n-p}$ and the count of curves
    constrained by $A_{p+1},\dots,A_n$ are the same.)
  \item Ends where the polygon degenerates as a polygon on
    $(\alphas,\betas^0,\dots,\betas^i,\betas^j,\dots,\betas^k)$ with
    $n$ point constraints and a polygon on
    $(\betas^i,\betas^{i+1},\dots,\betas^k)$. These contributions
    vanish because the count of rigid polygons on
    $(\betas^i,\betas^{i+1},\dots,\betas^k)$ with corners at the
    $\Theta^i$ is zero~\cite[Lemma 4.5]{OSz-hf-dcov}.
  \item Ends where a pair of constrained points collide. These cancel
    in pairs, as in the proof of Lemma~\ref{HF-ainf-well-defd}
  \end{itemize}
  (Compare~\cite[Section 4.2]{OSz-hf-dcov}, \cite[Theorem 3.4 and
  Lemma 4.4]{HN-kh-detects}.)

  By construction, given a crossing $C_0$ of $L$, $M$ is the mapping cone of an $\Ainf$-module homomorphism
  \[
    \bigoplus_{\{I\in\{0,1,\infty\}^{c}\mid I(C_0)\in\{0,1\}\}}\CFa(\HD^I)\to
    \bigoplus_{\{I\in\{0,1,\infty\}^{c}\mid I(C_0)=\infty\}}\CFa(\HD^I),
  \]
  and the surgery exact triangle for $\HFa$ implies that this
  homomorphism is an isomorphism. So, it follows from the same
  inductive argument as in Ozsv\'ath-Szab\'o's case~\cite[Proof of
  Theorem 4.1]{OSz-hf-dcov} that 
  \[
    \wt{C}\coloneqq \bigoplus_{\{I\in\{0,1\}^{c}\}}\CFa(\HD^I)
  \]
  is quasi-isomorphic, as an $\Ainf$-module, to
  \[
    \CFa(\HD^{\infty,\infty,\dots,\infty})=\CFa(\Sigma(L)).
  \]

  Let
  \[
    C\coloneqq \bigoplus_{\{I\in\{0,1\}^{c}\}}\HFa(\HD^I).
  \]
  The complex $C$ is filtered by $|I|=\sum I$, the \emph{cube
    filtration}. That is,
  \[
    F^i=\bigoplus_{\{I\in\{0,1\}^{c}\mid |I|\geq i\}}\HFa(\HD^I).
  \]
  
  Choose a homotopy equivalence, over $\FF_2$, between each
  $\CFa(\HD^I)$ and $\HFa(\HD^I)$, so that the composition
  $\HFa(\HD^I)\to\CFa(\HD^I)\to\HFa(\HD^I)$ is the identity map.
  These homotopy equivalences induce maps
  $f\co C\to \wt{C}$ and $g\co \wt{C}\to C$ with $g\circ
  f=\Id_C$. Define the operation $\mu_1$ on $C$ by
  \[
    \mu_1(x)=g(\mu_1(f(x))).
  \]
  By homological perturbation theory (Proposition~\ref{prop:hpt}), $C$
  inherits the structure of an
  $\Ainf$-module over $\FF_2[X]/(X^2)$. (Equivalently, $C$ is obtained
  from $\wt{C}$ by canceling all differentials which do not change the
  cube filtration.) Further:
  \begin{itemize}[leftmargin=*]
  \item The operations $\mu_n$ on $C$ all respect the cube filtration.
  \item The differential $\mu_1$ on $C$ increases the cube filtration
    by at least $1$. Further, by construction, $\mu_1$ agrees with the
    differential on Ozsv\'ath-Szab\'o's cube~\cite[Proposition
    6.2]{OSz-hf-dcov}, and hence the first-order part of $\mu_1$
    agrees with the Khovanov differential.
  \item The zeroth-order part of $\mu_2$ is induced by the
    $H_1/\tors$-action on the groups
    $\HFa(\HD^I)=\HFa(\#^{k(I)}(S^2\times S^1))$. Hence, by
    Hedden-Ni~\cite[Theorem 4.5]{HN-kh-detects} (or inspection), the
    zeroth-order part of $\mu_2$ agrees with the $X$-action on Khovanov homology.
  \end{itemize}
  This completes the proof.
\end{proof}

\section{Proof of the detection theorems}\label{sec:detect}
We will use the following:
\begin{proposition}\label{prop:sphere-split}\cite[Proposition 5.1]{HN-hf-small}
  Let $L$ be a link in $S^3$. If $L=L_1\# L_2$ then
  $\Sigma(L)=\Sigma(L_1)\#\Sigma(L_2)$. If $L=L_1\amalg L_2$ then
  $\Sigma(L)=\Sigma(L_1)\#\Sigma(L_2)\# (S^2\times S^1)$. If $L$ is a
  non-split prime link then $\Sigma(L)$ is irreducible. If $L$ is a
  non-split link then $\Sigma(L)$ has no homologically essential
  $2$-spheres (i.e., no $S^2\times S^1$ summands).
\end{proposition}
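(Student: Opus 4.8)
The plan is to build the first two decompositions by hand and then reduce the two non-split assertions to the Equivariant Sphere Theorem. For the first statement, I would take a $2$-sphere $S\subset S^3$ realizing $L=L_1\#L_2$, so that $S$ meets $L$ transversally in two points and cuts $(S^3,L)$ into ball--tangle pairs $(B_i,T_i)$, $i=1,2$, where $T_i=L\cap B_i$ is a $1$-string tangle and capping $T_i$ with a trivial arc in the opposite ball recovers $L_i$. Since $S$ meets the branch locus in two points, $\widetilde S:=\pi^{-1}(S)$ is again a $2$-sphere, and $\Sigma(L)$ is the union of $\Sigma(B_1,T_1)$ and $\Sigma(B_2,T_2)$ along $\widetilde S$; because the double branched cover of a ball over a trivial arc is a ball, filling the arc back in identifies $\Sigma(B_i,T_i)$ with $\Sigma(L_i)$ minus an open ball, so $\Sigma(L)=\Sigma(L_1)\#\Sigma(L_2)$. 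For the second statement I would write $S^3=B_1\cup(S^2\times[0,1])\cup B_2$ with $L_i\subset\mathrm{int}\,B_i$ and $S^2\times[0,1]$ disjoint from $L$; then $\pi^{-1}(S^2\times[0,1])$ is the trivial double cover $(S^2\times[0,1])\sqcup(S^2\times[0,1])$, while $\pi^{-1}(B_i)=\Sigma(L_i)$ with two open balls removed. Gluing in the first cylinder performs the connected sum $\Sigma(L_1)\#\Sigma(L_2)$; the second cylinder then joins two ball boundaries of this single connected manifold, which is by definition connected sum with $S^2\times S^1$, so $\Sigma(L)=\Sigma(L_1)\#\Sigma(L_2)\#(S^2\times S^1)$.

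For the third statement, suppose $\Sigma(L)$ is reducible, and let $\tau$ be the (orientation-preserving) deck involution, so $\mathrm{Fix}(\tau)=\widetilde L:=\pi^{-1}(L)$ and $\Sigma(L)/\tau=S^3$. By the Equivariant Sphere Theorem (Meeks--Simon--Yau, via minimal surfaces in a $\tau$-invariant metric) there is an embedded essential $2$-sphere $S$ with $\tau(S)=S$ or $\tau(S)\cap S=\emptyset$, and after an equivariant perturbation I may assume $S$ is transverse to $\widetilde L$. In the disjoint case $S$ misses $\widetilde L$, so $\pi|_S$ embeds $S$ onto a $2$-sphere $\overline S\subset S^3\setminus L$; if $L$ lay on one side of $\overline S$ the other side would be a ball disjoint from $L$ whose preimage is two balls, one bounded by $S$ --- contradicting essentiality. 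So $\overline S$ is a splitting sphere, contradicting non-splitness. In the invariant case, $\tau|_S$ is an orientation-preserving involution of $S^2$ with fixed set $S\cap\widetilde L$; a short local Lefschetz computation --- using that the derivative of $\tau$ at a fixed point is $-1$ on the tangent plane of $S$ and $+1$ along $\widetilde L$ --- shows it has exactly two fixed points, so $\overline S=S/\tau$ is a $2$-sphere meeting $L$ transversally in two points. If one side of $\overline S$ were a trivial $1$-string tangle its preimage would be a ball bounded by $S$, again contradicting essentiality; otherwise $\overline S$ exhibits $L$ as a nontrivial connected sum, contradicting primeness. Hence $\Sigma(L)$ is irreducible.

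The fourth statement I would deduce from the first and third. If $L$ is non-split it has a connected-sum decomposition $L=L_1\#\cdots\#L_k$ into non-split prime links (a split prime factor would itself make $L$ split), so by the first statement $\Sigma(L)=\Sigma(L_1)\#\cdots\#\Sigma(L_k)$; by the third each $\Sigma(L_i)$ is irreducible, hence not homeomorphic to $S^2\times S^1$. By uniqueness of prime decomposition of closed oriented $3$-manifolds, the $\Sigma(L_i)$ (after discarding $S^3$ factors) are the prime summands of $\Sigma(L)$, so none is $S^2\times S^1$; thus $\Sigma(L)$ has no $S^2\times S^1$ summand, equivalently no non-separating $2$-sphere, equivalently no homologically essential $2$-sphere. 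Alternatively one can avoid the prime decomposition of links and induct on crossing number, using the invariant case above to split off a nontrivial connected sum whenever $\Sigma(L)$ is reducible.

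The one step that is not elementary cut-and-paste is the appeal to the Equivariant Sphere Theorem in the third statement: extracting a $\tau$-invariant essential sphere from mere reducibility is precisely its content, and that is where the argument leaves combinatorial topology for minimal-surface theory. The only other delicate point is arranging the invariant sphere to meet the branch locus transversally, so that its quotient is an honest embedded $2$-sphere in $S^3$.
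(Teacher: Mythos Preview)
The paper does not give its own proof of this proposition; it simply quotes it from Hedden--Ni~\cite[Proposition~5.1]{HN-hf-small}. Your argument is correct and is essentially the standard one (and is in the same spirit as the cited reference): the first two parts are straightforward cut-and-paste with the branched cover, and the irreducibility of $\Sigma(L)$ for $L$ non-split prime is exactly the place where the Equivariant Sphere Theorem enters.

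Two small points worth tightening. In the invariant case, before asserting that $\tau|_S$ has exactly two fixed points you should dispose of the other possibilities: if $\tau|_S$ were fixed-point-free the quotient would be an $\mathbb{RP}^2$ embedded in $S^3$, which is impossible; and if the fixed set were a circle then at any fixed point the $+1$-eigenspace of $d\tau$ (which is $T\widetilde L$) would be tangent to $S$, contradicting the transversality you arranged. Your local Lefschetz computation then finishes it cleanly. In the fourth statement, the existence of a connected-sum decomposition of a non-split link into non-split prime links is itself a fact that deserves a word (it follows from uniqueness of the JSJ-type decomposition of the link complement, or from your suggested crossing-number induction); your alternative route via induction avoids this and is perhaps cleaner.
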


\begin{corollary}\label{cor:which-spheres}
  Let $L$ be a link in $S^3$ and $p,q$ points in $L$. Let $\gamma$ be
  a path in $S^3$ from $p$ to $q$ with the interior of $\gamma$
  disjoint from $L$, and let $\zeta\subset \Sigma(L)$ be the preimage
  of $\gamma$. If there is an embedded sphere $\wt{S}\subset \Sigma(L)$ so that
  $\gamma\cdot \wt{S}$ is nonzero then there is an embedded sphere
  $S\subset S^3\setminus L$ separating $p$ and $q$.
\end{corollary}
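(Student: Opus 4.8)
The plan is to combine \Proposition{sphere-split} with a direct homological computation identifying the class of $\zeta$ in $\Sigma(L)$. Since $\zeta\cdot\wt{S}\neq 0$, the class $[\wt{S}]\in H_2(\Sigma(L))$ is nonzero, so $\Sigma(L)$ contains a homologically essential $2$-sphere; by the last clause of \Proposition{sphere-split}, $L$ must be split. Write $L=L_1\amalg\cdots\amalg L_k$ for its decomposition into split components, realized by disjoint balls $B_1,\dots,B_k\subset S^3$ with $L_i$ non-split and contained in $B_i$, and say $p\in L_a$, $q\in L_b$. The goal is to prove $a\neq b$: then $\partial B_a$ is a $2$-sphere in $S^3\setminus L$ separating $p$ from $q$, as desired.

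To set up the computation I would describe $\Sigma(L)$ explicitly in terms of this splitting. Over $W:=S^3\setminus(B_1\cup\cdots\cup B_k)$ the branched cover is trivial ($W$ is simply connected and disjoint from $L$), so it is two copies $V',V''$ of $W$; over $B_i$ it is $\Sigma(B_i,L_i)$, the double cover of $B_i$ branched over $L_i$, which is connected (as $L_i\neq\emptyset$) and is $\Sigma(L_i)$ with two open balls removed, its two boundary spheres being glued to the lifts $S_i'\subset V'$ and $S_i''\subset V''$ of $S_i:=\partial B_i$. Thus $\Sigma(L)$ is $V'\sqcup V''$ joined by the $k$ ``tubes'' $\Sigma(B_i,L_i)$, recovering $\Sigma(L)\cong\Sigma(L_1)\#\cdots\#\Sigma(L_k)\#(k-1)(S^2\times S^1)$. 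Since each $\Sigma(L_i)$ has no non-separating sphere (again \Proposition{sphere-split}) and separating spheres are nullhomologous, an innermost-disk / sphere-decomposition argument shows that every embedded $2$-sphere in $\Sigma(L)$ --- in particular $\wt{S}$ --- is homologous to a sum of the classes $[S_i']$, $[S_i'']$.

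It then remains to compute the pairings $\zeta\cdot S_i'$ and $\zeta\cdot S_i''$. Because the interior of $\gamma$ avoids the branch locus, the preimage $\zeta$ is a circle that double-covers $\gamma$, and over each transverse point of $\gamma\cap S_i$ exactly one of the two lifts lies on $S_i'$; hence $\zeta\cdot S_i'$ equals, mod $2$, the number of times $\gamma$ crosses $S_i$, and likewise for $S_i''$. This parity is odd exactly when precisely one endpoint of $\gamma$ lies on the side of $S_i$ containing $L_i$, i.e.\ exactly when $i\in\{a,b\}$ and $a\neq b$. Combining with the previous paragraph, $\zeta\cdot\wt{S}$ is a sum of such parities, and since $\zeta\cdot\wt{S}\neq 0$ by hypothesis we conclude $a\neq b$, completing the proof. (If $a=b$ instead, $\gamma$ has both endpoints on the same side of every $S_i$, so $\zeta$ meets each $S_i'$ and $S_i''$ an even number of times and pairs trivially with every $2$-sphere, contradicting the hypothesis.)

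The main obstacle is the middle step: making precise that, homologically, every $2$-sphere in $\Sigma(L)$ is carried by the splitting spheres $S_i',S_i''$. This is standard $3$-manifold topology (irreducibility of the capped-off pieces $\Sigma(L_i)$, nullhomology of separating spheres, and the sphere-decomposition theorem), but it should be cited carefully; the remaining ingredients are the single application of \Proposition{sphere-split} and an elementary parity count. I would also record the already-observed point from \Section{ss-respect} that $[\zeta]$ does not depend on the choice of $\gamma$ --- isotoping $\gamma$ across $L$ alters $\zeta$ by the nullhomologous lift of a meridian --- though only the given $\gamma$ is needed here.
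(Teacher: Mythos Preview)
Your argument is correct, but it takes a more computational route than the paper's. The paper proves the contrapositive directly: assuming no sphere separates $p$ and $q$, it uses the fact (from \Section{ss-respect}, which you mention at the end) that $[\zeta]$ is independent of the choice of $\gamma$ to replace $\gamma$ by an arc lying entirely inside $B_1$ (where $p,q\in L_1$). Then $\zeta$ lies in the $\Sigma(L_1)$ summand of the connected sum, which by \Proposition{sphere-split} contains no homologically essential spheres, while the essential spheres of the remaining summands are manifestly disjoint from it; hence $\zeta\cdot\wt S=0$ for every sphere $\wt S$.

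Your approach instead keeps the given $\gamma$, identifies a generating set $\{[S_i'],[S_i'']\}$ for the sphere classes in $H_2(\Sigma(L))$ via an innermost-disk/surgery argument, and computes the pairings $\zeta\cdot S_i'$ directly as $\gamma\cdot S_i$. This is a genuinely different path: it trades the one-line rechoice of $\gamma$ for an explicit description of $H_2$ and a parity count. The payoff is that you never need to invoke independence of $[\zeta]$ from $\gamma$ (so the argument is self-contained in that respect), at the cost of the extra topological input you flagged---that sphere classes in a connected sum are sums of sphere classes in the summands. Both proofs ultimately rest on the same application of \Proposition{sphere-split} (that $\Sigma(L_i)$ has no essential spheres), but the paper's version sidesteps your ``main obstacle'' entirely.
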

\begin{proof}
  This follows from the same argument used to prove
  Proposition~\ref{prop:sphere-split}, but we can also deduce it from
  Proposition~\ref{prop:sphere-split}.

  We will prove the contrapositive.
  Assume there is no sphere separating $p$ and $q$.  Write
  $L=L_1\amalg\cdots\amalg L_k$ as a (split) disjoint union of links,
  so that each $L_i$ is non-split. Let $B_1,\dots,B_k$ be disjoint
  balls around $L_1,\dots,L_k$.

  Reordering the $L_i$, suppose that $p,q\in L_1$. As shown in
  Section~\ref{sec:ss-respect}, the homology class of $\zeta$ is
  independent of the choice of $\gamma$. So, we can assume that
  $\gamma$ is contained in $B_1$. By
  Proposition~\ref{prop:sphere-split} we have
  \[
    \Sigma(L)=\Sigma(L_1)\#\cdots\#\Sigma(L_k)\#(S^2\times S^1)^{\#(k-1)},
  \]
  and each $\Sigma(L_i)$ has no homologically essential $2$-spheres.
  The curve $\gamma$ lies in $\Sigma(L_1)$, so is disjoint from all of
  the homologically essential $2$-spheres. This proves the result.
\end{proof}

\subsection{Khovanov homology of split links}
\begin{lemma}\label{lem:split-is-free}
  Let $L$ be a link and $p,q$ points on $L$. Write $\rKhCx(L)_{p,q}$
  for the reduced Khovanov complex of $L$, reduced at $p$ and viewed
  as a module over $\FF_2[X]/(X^2)$ via the basepoint $q$. If there is
  a $2$-sphere in $S^3\setminus L$ separating $p$ and $q$ then
  $\rKh(L;\FF_2)$ is a free module.
\end{lemma}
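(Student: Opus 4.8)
By Corollary~\ref{cor:Kh-invariance} the $\FF_2[X]/(X^2)$-module structure on $\rKh(L;\FF_2)$ depends only on the isotopy class of $(L,p,q)$, so I am free to choose a convenient diagram. A $2$-sphere $S\subset S^3\setminus L$ separating $p$ from $q$ exhibits $L$ as a split union $L=L_1\amalg L_2$ with $p$ on a component of $L_1$ and $q$ on a component of $L_2$; choose disjoint diagrams for $L_1$ and $L_2$. The Khovanov complex is multiplicative under split union: the circles of every complete resolution partition into the ones coming from $L_1$ and the ones coming from $L_2$, so $\KhCx(L)\cong \KhCx(L_1)\otimes_{\FF_2}\KhCx(L_2)$ as complexes, and reducing at $p\in L_1$ gives (up to grading shift) $\rKhCx(L)_{p,q}\cong \rKhCx(L_1)_p\otimes_{\FF_2}\KhCx(L_2)$ as complexes. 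Since $q$ lies on $L_2$, the circle containing $q$ in any resolution is always a circle of the $L_2$-part, so the basepoint action of $X$ is $\Id\otimes X$, i.e.\ it acts only on the $\KhCx(L_2)$ tensor factor. Passing to homology via the K\"unneth theorem over the field $\FF_2$,
\[
  \rKh(L;\FF_2)\cong \rKh(L_1;\FF_2)\otimes_{\FF_2}\Kh(L_2;\FF_2)
\]
as modules over $\FF_2[X]/(X^2)$, with $X$ acting on the second factor.

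\textbf{Algebraic reduction.}
Every finitely generated $\FF_2[X]/(X^2)$-module is isomorphic to $(\FF_2[X]/(X^2))^{a}\oplus\FF_2^{b}$ for unique $a,b$, and is free iff $b=0$. Hence, for a nonzero finite-dimensional $\FF_2$-vector space $V$ with trivial action, $V\otimes_{\FF_2}M$ is free iff $M$ is free. Since $\rKh(L_1;\FF_2)\neq 0$, the previous display reduces the lemma to the statement that $\Kh(L_2;\FF_2)$ is a free $\FF_2[X]/(X^2)$-module for the action at $q\in L_2$.

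\textbf{Freeness of unreduced Khovanov homology over $\FF_2$.}
It remains to show that for any link $K$ with a basepoint $q$, $\Kh(K;\FF_2)$ is free over $\FF_2[X]/(X^2)$; I would do this via odd Khovanov homology. Over $\FF_2$, odd and even Khovanov homology agree, compatibly with the basepoint action. In the odd theory the connecting homomorphism in the unreduced--reduced long exact sequence vanishes (this is the key way the odd theory differs from the even one), so $\Kh_{odd}(K)\cong \rKh_{odd}(K)\otimes_{\ZZ}\ZZ[X]/(X^2)$ as modules over the basepoint algebra; base-changing to $\FF_2$ gives $\Kh(K;\FF_2)\cong \rKh_{odd}(K;\FF_2)\otimes_{\FF_2}\FF_2[X]/(X^2)$, which is free. (Equivalently, one may invoke Shumakovitch's isomorphism $\Kh(K;\FF_2)\cong\rKh(K;\FF_2)\otimes_{\FF_2}\FF_2[X]/(X^2)$ of modules over the basepoint algebra directly.) Combining this with the algebraic reduction completes the proof.

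\textbf{Expected main obstacle.}
The genuinely non-formal input is the last step -- freeness of unreduced Khovanov homology over $\FF_2$ as a module over the basepoint algebra. The two compatibility checks it rests on (matching the $X$-action across the even--odd identification over $\FF_2$, and across the split-union K\"unneth isomorphism) are routine once set up: the split-union decomposition keeps the two diagrams in disjoint disks and the action at $q$ never involves the $L_1$-part, so these require only careful bookkeeping rather than new ideas.
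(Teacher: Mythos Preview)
Your proof is correct and follows essentially the same route as the paper: reduce to a split diagram via Corollary~\ref{cor:Kh-invariance}, use the K\"unneth decomposition $\rKh(L;\FF_2)\cong\rKh(L_1;\FF_2)\otimes\Kh(L_2;\FF_2)$, and then prove that $\Kh(L_2;\FF_2)$ is free over $\FF_2[X]/(X^2)$ via odd Khovanov homology and \cite[Proposition~1.8]{OSzR-kh-oddkhovanov}. The only cosmetic difference is in the last step: the paper shows freeness by arguing that the image of multiplication by $X$ is half-dimensional (identifying it with the image of $\rKh\to\Kh$ in the long exact sequence, which is short exact by the odd-theory result), so it only needs the dimension count; your phrasing asserts the full module isomorphism $\Kh(K;\FF_2)\cong\rKh(K;\FF_2)\otimes\FF_2[X]/(X^2)$, which is slightly more than ORS state and is exactly the compatibility you flag at the end, but is in any case straightforward (or available directly from Shumakovitch as you note).
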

\begin{proof}
  By Corollary~\ref{cor:Kh-invariance}, we may assume that we are
  computing $\rKh(L;\FF_2)$ from a split diagram, i.e., the disjoint
  union of a link diagram $L_1$ containing $p$ and a link diagram
  $L_2$ containing $q$. Then,
  $\rKh(L;\FF_2)\cong \rKh(L_1;\FF_2)\otimes\Kh(L_2;\FF_2)$ as
  $\FF_2[X]/(X^2)$-modules. By a result of
  Shumakovitch~\cite[Corollary 3.2.B]{Shu-kh-torsion},
  $\Kh(L_2;\FF_2)$ is a free module, so $\rKh(L;\FF_2)$ is, as well.
  %
\end{proof}

\subsection{Detection of split links by Khovanov homology}
Let $\Lambda$ denote the universal Novikov field over $\FF_2$,
consisting of formal sums $\sum_i f_i t^{r_i}$ where the $f_i\in \FF_2$,
$r_i\in\RR$, and $\lim_{i\to\infty} r_i=\infty$. An element
$\omega\in H^2(Y;\RR)$ induces a map $H_2(Y;\ZZ)\to \RR$ and hence a
ring homomorphism $\FF_2[H_2(Y;\ZZ)] \to \Lambda$. This makes $\Lambda$
into a module $\Lambda_\omega$ over $\FF_2[H_2(Y;\ZZ)]$.
\begin{citethm}\cite[Theorem 1.1]{AL-hf-incomp}\label{thm:AL-detect}
  Let $Y$ be a closed, oriented $3$-manifold and
  $\omega\in H^2(Y;\RR)$. Then, $\tHFa(Y;\Lambda_\omega)=0$ if and
  only if $Y$ contains a $2$-sphere $S$ so that $\int_S\omega\neq 0$.
\end{citethm}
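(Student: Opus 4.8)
The plan is to prove the two implications separately. The direction ``$S$ exists $\implies \tHFa(Y;\Lambda_\omega)=0$'' is a direct computation. A sphere $S\subset Y$ with $\int_S\omega\neq0$ is homologically essential, hence non-separating, so cutting $Y$ along $S$ and capping the two resulting boundary spheres exhibits $Y\cong Y_0\#(S^2\times S^1)$ with $[S]$ corresponding to the class of $S^2\times\{\mathrm{pt}\}$. Under the induced splitting $H^2(Y;\RR)\cong H^2(Y_0;\RR)\oplus H^2(S^2\times S^1;\RR)$ I would write $\omega=\omega_0+\omega_1$, so that $r\coloneqq\langle\omega_1,[S^2\times\{\mathrm{pt}\}]\rangle=\int_S\omega\neq0$. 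By the connected-sum (K\"unneth) formula for Heegaard Floer homology with twisted coefficients, $\tCFa(Y;\Lambda_\omega)$ is quasi-isomorphic to $\tCFa(Y_0;\Lambda_{\omega_0})\otimes_\Lambda\tCFa(S^2\times S^1;\Lambda_{\omega_1})$, so it suffices to see the second factor is acyclic. Computing it from the standard genus-one Heegaard diagram for $S^2\times S^1$ --- two generators joined by two bigons whose domains differ by the periodic domain carrying $[S^2\times\{\mathrm{pt}\}]$ --- gives the complex $\Lambda\xrightarrow{\,1+t^{r}\,}\Lambda$; since the lowest-order coefficient of $1+t^{r}$ equals $1\in\FF_2$ it is invertible over the Novikov field, so $\tHFa(S^2\times S^1;\Lambda_{\omega_1})=0$ and hence $\tHFa(Y;\Lambda_\omega)=0$.

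For the converse, ``no such sphere $\implies\tHFa(Y;\Lambda_\omega)\neq0$'', I would first take the prime decomposition $Y\cong Y_1\#\cdots\#Y_n\#(S^2\times S^1)^{\#k}$ with each $Y_i$ irreducible and not $S^2\times S^1$. An innermost-sphere argument shows any embedded sphere isotopes into a punctured prime summand, where it either bounds a ball or (in an $S^2\times S^1$ factor) is isotopic to $S^2\times\{\mathrm{pt}\}$; hence every homologically essential sphere is one of the latter, and the hypothesis forces $\omega$ to restrict to $0$ in $H^2(S^2\times S^1;\RR)$ on each such summand. Applying the twisted K\"unneth formula again reduces the problem to showing $\tHFa(Y_i;\Lambda_{\omega_i})\neq0$ for each irreducible $Y_i$, since $\tHFa(S^2\times S^1;\Lambda)\cong\HFa(S^2\times S^1;\FF_2)\otimes_{\FF_2}\Lambda\cong\Lambda^2\neq0$. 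If $b_1(Y_i)=0$ then $H^2(Y_i;\RR)=0$, so $\omega_i=0$ and $\tHFa(Y_i;\Lambda)\cong\HFa(Y_i;\FF_2)\otimes_{\FF_2}\Lambda\neq0$, using that $\HFa$ of a rational homology sphere is nonzero (each $\SpinC$-structure contributes, as one sees from the Euler characteristic $\pm1$).

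This leaves the case of an irreducible $Y_i$ with $b_1(Y_i)>0$, which is the heart of the argument. Such a $Y_i$ has no essential spheres, so the hypothesis on $\omega_i$ is vacuous and I must prove nonvanishing for \emph{every} $\omega_i\in H^2(Y_i;\RR)$. My plan here is to invoke Gabai's theorem that an irreducible closed $3$-manifold with $b_1>0$ carries a taut foliation $\mathcal{F}$, Eliashberg--Thurston's perturbation of $\mathcal{F}$ (not the product foliation on $S^2\times S^1$) to a tight contact structure $\xi$, and the nonvanishing of the twisted contact invariant of $\xi$ due to Ozsv\'ath--Szab\'o (the Heegaard Floer counterpart of Kronheimer--Mrowka's monopole result), which yields $\tHFa(Y_i;\Lambda_{\omega_{\mathcal{F}}})\neq0$ for the class $\omega_{\mathcal{F}}$ determined by $\mathcal{F}$. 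The main obstacle is exactly the passage from the distinguished class $\omega_{\mathcal{F}}$ to the arbitrary given class $\omega_i$: one needs, for each $\omega_i$, a taut foliation adapted to $\omega_i$ (via the fibered-face/sutured-hierarchy refinement of Gabai's construction) --- or, equivalently, one decomposes $Y_i$ along a norm-minimizing surface dual to a rational class approximating $\omega_i$, applies Juh\'asz's criterion that the sutured Floer homology of a taut sutured manifold is nonzero, and identifies that $\mathit{SFH}$ with the relevant twisted $\HFa$ --- and then runs a continuity argument from rational to arbitrary $\omega_i$, together with a separate treatment of torsion $\omega_i$ via the untwisted nonvanishing theorem. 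Carrying this out is the content of~\cite[Theorem~1.1]{AL-hf-incomp}.
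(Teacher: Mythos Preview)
This statement is a \emph{cited} theorem (Theorem~1.1 of Alishahi--Lipshitz, reference \texttt{AL-hf-incomp}): the present paper does not prove it but quotes it as a black box and immediately applies it to deduce Corollary~\ref{cor:detect}. So there is no ``paper's own proof'' to compare your proposal against.

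That said, your sketch is a reasonable outline of the strategy behind the cited result. The easy direction is exactly as you describe. For the hard direction, your reduction via prime decomposition and the twisted K\"unneth formula to the irreducible case is correct, and you have correctly identified the crux: nonvanishing of $\tHFa(Y_i;\Lambda_{\omega_i})$ for irreducible $Y_i$ with $b_1>0$ and \emph{arbitrary} $\omega_i$. Your proposed route through Gabai/Eliashberg--Thurston/Ozsv\'ath--Szab\'o contact invariants is one historically valid approach (and indeed underlies the Hedden--Ni and Ozsv\'ath--Szab\'o nontriviality results that Alishahi--Lipshitz build on), though the actual argument in \texttt{AL-hf-incomp} is organized somewhat differently and leans on prior nontriviality theorems rather than redoing the foliation argument from scratch. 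Your closing sentence already acknowledges that filling in this step \emph{is} the content of the cited theorem, so you are not really claiming an independent proof --- which is appropriate here.
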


\begin{corollary}\label{cor:detect}
  Suppose that $\omega\in \Hom(H_2(Y;\ZZ),\ZZ)$, and let $\FF_2[t^{-1},t]_\omega$ be
  $\FF_2[t^{-1},t]$, viewed as an $\FF_2[H_2(Y;\ZZ)]$-module via
  $\omega$. Then, $\tHFa(Y;\FF_2[t^{-1},t]_\omega)$ is a torsion
  $\FF_2[t^{-1},t]$-module if and only if $Y$ contains a $2$-sphere $S$
  so that $\omega([S])\neq 0$.
\end{corollary}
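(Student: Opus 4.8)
The plan is to deduce Corollary~\ref{cor:detect} from the Alishahi--Lipshitz detection theorem (Theorem~\ref{thm:AL-detect}) by a base-change argument comparing the coefficient system $\FF_2[t^{-1},t]_\omega$ with the Novikov coefficient system $\Lambda_\omega$. First I would set $R=\FF_2[t^{-1},t]$ and note that, for any weakly admissible pointed Heegaard diagram, $\tCFa(Y;R_\omega)$ is a free $R$-module on the finite set $T_\alpha\cap T_\beta$; since $R$ is a PID, $\tHFa(Y;R_\omega)$ is a finitely generated $R$-module and hence splits as $R^{m}\oplus(\text{torsion})$, so it is a torsion $R$-module precisely when $m=0$.

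Next I would identify $\Lambda_\omega$ as an extension of scalars of $R_\omega$. Viewing $\omega\in\Hom(H_2(Y;\ZZ),\ZZ)$ inside $H^2(Y;\RR)$, integrality of $\omega$ means the ring homomorphism $\FF_2[H_2(Y;\ZZ)]\to\Lambda$, $[S]\mapsto t^{\omega([S])}$, factors through the inclusion $R\hookrightarrow\Lambda$ of Laurent polynomials into the Novikov field; hence $\Lambda_\omega\cong R_\omega\otimes_R\Lambda$ and $\tCFa(Y;\Lambda_\omega)\cong\tCFa(Y;R_\omega)\otimes_R\Lambda$. The key point is that $\Lambda$, being a field in which $t$ is a unit, is torsion-free and therefore flat over the PID $R$, so $-\otimes_R\Lambda$ commutes with taking homology and $\tHFa(Y;\Lambda_\omega)\cong\tHFa(Y;R_\omega)\otimes_R\Lambda$. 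Applying $-\otimes_R\Lambda$ to the decomposition above kills every torsion summand (a cyclic torsion module $R/(p)$ with $p\neq 0$ becomes $\Lambda/(p)=0$), leaving $\Lambda^{m}$. Thus $\tHFa(Y;\Lambda_\omega)=0$ if and only if $m=0$, i.e.\ if and only if $\tHFa(Y;R_\omega)$ is a torsion $R$-module.

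Finally I would invoke Theorem~\ref{thm:AL-detect}: $\tHFa(Y;\Lambda_\omega)=0$ if and only if $Y$ contains a $2$-sphere $S$ with $\int_S\omega\neq 0$, and for integral $\omega$ this pairing is simply $\omega([S])\in\ZZ$, so the condition is $\omega([S])\neq 0$. Combining with the previous paragraph gives the corollary. I do not anticipate a serious obstacle; the only points needing care are the compatibility of the two $\FF_2[H_2(Y;\ZZ)]$-module structures on the relevant coefficient rings (so that $\tCFa(Y;\Lambda_\omega)$ genuinely arises from $\tCFa(Y;R_\omega)$ by base change) and the flatness of $\Lambda$ over $R$, both routine. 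One could alternatively run the argument through the universal coefficient sequence of Lemma~\ref{lem:univ-coeff}, but the $\Tor$ term also vanishes upon tensoring a finitely generated $R$-module with the field $\Lambda$, so there is nothing to gain.
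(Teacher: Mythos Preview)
Your proposal is correct and follows essentially the same approach as the paper: both reduce to Theorem~\ref{thm:AL-detect} by a base-change/universal-coefficients argument showing that $\tHFa(Y;\FF_2[t^{-1},t]_\omega)$ is torsion if and only if $\tHFa(Y;\Lambda_\omega)=0$. The paper routes the comparison through the intermediate field $\FF_2(t)$ (and treats $\omega=0$ separately by citing nonvanishing of $\HFa$), whereas you pass directly from $R$ to $\Lambda$ using flatness of $\Lambda$ over the PID $R$; your version is a mild streamlining of the same argument, and your uniform treatment already covers the $\omega=0$ case.
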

\begin{proof}
  In the case $\omega=0$, this is the well-known statement that
  $\HFa(Y)\neq 0$ (see~\cite[Theorem 1.2]{AL-hf-incomp}). So, assume
  $\omega\neq 0$.

  Let $\FF_2(t)$ denote the field of rational functions in $t$; this is
  also the field of fractions of $\FF_2[t,t^{-1}]$. The module
  $\tHFa(Y;\FF_2[t^{-1},t]_\omega)$ is torsion if and only if
  $\tHFa(Y;\FF_2[t^{-1},t]_\omega)\otimes_{\FF_2[t^{-1},t]}\FF_2(t)=0$. It follows from
  the universal coefficient theorem that
  \begin{align*}
    \tHFa(Y;\FF_2(t)_\omega)&\cong \tHFa(Y;\FF_2[t^{-1},t]_\omega)\otimes_{\FF_2[t^{-1},t]}\FF_2(t)\\
    \tHFa(Y;\Lambda_\omega)&\cong \tHFa(Y;\FF_2(t)_\omega)\otimes_{\FF_2(t)}\Lambda.
  \end{align*}
  (Compare~\cite[Formula (2.1)]{AL-hf-incomp}.) So, since $\Lambda$ and
  $\FF_2(t)$ are fields,
  \[
    \dim_\Lambda \tHFa(Y;\Lambda_\omega) = \dim_{\FF_2(t)}\tHFa(Y;\FF_2(t)_\omega).
  \]
  Hence, $\dim_\Lambda \tHFa(Y;\Lambda_\omega)=0$ if and only if
  $\tHFa(Y;\FF_2[t^{-1},t]_\omega)$ is torsion, so the result follows from
  Theorem~\ref{thm:AL-detect}.
\end{proof}

\begin{lemma}\label{lem:HF-nontriv}
  Let $\FF_2$ be any field.  If $Y$ is a 3-manifold with
  $H_1(Y)\cong \ZZ$ and no homologically essential 2-spheres in $Y$
  then the unrolled homology of $\CFa(Y;\FF_2)$ is nontrivial. More
  generally, for any $3$-manifold $Y$, if $\zeta\in H_1(Y)$ is such
  that the intersection number $\zeta\cdot S=0$ for all $2$-spheres
  $S\subset Y$ then the unrolled homology of $\CFa(Y;\FF_2)$ with
  respect to $\zeta$ is nontrivial.
\end{lemma}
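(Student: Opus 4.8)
The plan is to feed the hypothesis into the twisted-coefficient detection theorem (Corollary~\ref{cor:detect}) and then translate the output back to unrolled homology via Corollary~\ref{cor:free-is-nontriv}.

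First I would reduce the first assertion to the ``more generally'' clause. If $H_1(Y)\cong\ZZ$, choose $\zeta\in H_1(Y)$ a generator. By hypothesis $Y$ has no homologically essential $2$-spheres, i.e.\ every embedded $2$-sphere $S\subset Y$ is null-homologous, so $\zeta\cdot S=\langle\zeta,[S]\rangle=0$ for every such $S$. Thus $\zeta$ satisfies the hypothesis of the general statement, and it suffices to prove that.

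Given $\zeta\in H_1(Y)$ with $\zeta\cdot S=0$ for all $2$-spheres $S\subset Y$, let $\omega\colon H_2(Y;\ZZ)\to\ZZ$ be the homomorphism $[S]\mapsto\zeta\cdot S$, i.e.\ the image of $\zeta$ under $H_1(Y)\onto H_1(Y)/\tors=\Hom(H_2(Y;\ZZ),\ZZ)$; by hypothesis $\omega([S])=0$ for every $2$-sphere $S\subset Y$. As recalled in Section~\ref{sec:holomorphic}, up to quasi-isomorphism the twisted complex $\tCFa(Y;\FF_2[t^{-1},t]_\zeta)$, and hence the module $\tHFa(Y;R_\zeta)$, depends on $\zeta$ only through $[\zeta]\in H_1(Y)/\tors$, and $R_\zeta$ is precisely the module $\FF_2[t^{-1},t]_\omega$ appearing in Corollary~\ref{cor:detect}. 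By the contrapositive of that corollary, $\tHFa(Y;R_\zeta)$ is \emph{not} a torsion $\FF_2[t^{-1},t]$-module. Since the twisted complex is finitely generated and free over the PID $\FF_2[t^{-1},t]$, its homology $\tHFa(Y;R_\zeta)$ is a finitely generated $\FF_2[t^{-1},t]$-module, so, being non-torsion, it contains a free summand. Corollary~\ref{cor:free-is-nontriv} then gives exactly that the unrolled homology of $\CFa(Y)$ with respect to $\zeta$ is nontrivial; that this unrolled homology is independent of the auxiliary choices in its construction is Theorem~\ref{thm:CFa-Ainf-well-defd} together with Lemma~\ref{lem:X-hom-well-def-ungr}.

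There is no real obstacle here: the argument is essentially bookkeeping among Corollaries~\ref{cor:detect} and~\ref{cor:free-is-nontriv} and Theorem~\ref{thm:AL-detect}. The only points needing care are matching the intersection-pairing data ($\zeta\cdot S$ with $\omega([S])$) and identifying $R_\zeta$ with the module $\FF_2[t^{-1},t]_\omega$ of Corollary~\ref{cor:detect}; and, to obtain arbitrary field coefficients as stated, one uses the evident field-coefficient versions of Theorem~\ref{thm:AL-detect} and of the material of Section~\ref{sec:twist-to-Ainf} in place of the $\FF_2$-coefficient statements above.
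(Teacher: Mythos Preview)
Your proposal is correct and follows essentially the same route as the paper: define $\omega$ as intersection with $\zeta$, invoke Corollary~\ref{cor:detect} to get a free $\FF_2[t^{-1},t]$-summand in $\tHFa(Y;R_\zeta)$, and then apply Corollary~\ref{cor:free-is-nontriv}. You have simply filled in a few details the paper leaves implicit (the reduction of the first statement to the general one, and the passage from ``not torsion'' to ``has a free summand'' via finite generation).
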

\begin{proof}
  We prove the more general statement. Let
  $\omega\in \Hom(H_2(Y),\ZZ)$ be intersection with $\zeta$. By
  Corollary~\ref{cor:detect}, $\tHFa(Y;\FF_2[t^{-1},t]_\omega)$ has an
  $\FF_2[t^{-1},t]$-summand. So, by Corollary~\ref{cor:free-is-nontriv},
  the unrolled homology of $\CFa(Y;\FF_2)$ is nontrivial.
\end{proof}

\begin{corollary}\label{cor:HF-nontriv}
  If $L$ is a non-split, 2-component link then the unrolled homology
  of the complex $\CFa(\Sigma(L);\FF_2)$ is nontrivial.

  More generally, suppose $L=L_1\cup L_2$ is a union of two disjoint
  sublinks, $p\in L_1$, and $q\in L_2$. Endow $\CFa(\Sigma(L);\FF_2)$
  with the $\Ainf$-module structure over $\FF_2[X]/(X^2)$ coming from
  a lift of a path from $p$ to $q$ (\S\ref{sec:ss-respect}). If there
  is no $2$-sphere separating $L_1$ and $L_2$ then the unrolled
  homology of $\CFa(\Sigma(L);\FF_2)$ is nontrivial.
\end{corollary}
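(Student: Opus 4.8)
The plan is to assemble this from two results already in hand: Corollary~\ref{cor:which-spheres} and Lemma~\ref{lem:HF-nontriv}. First I would observe that the first sentence is the special case of the general statement in which $L_1$ and $L_2$ are the two components of $L$; for a two-component link, ``non-split'' means there is no $2$-sphere in $S^3\setminus L$ with one component on each side, and since each component is connected this is the same as there being no $2$-sphere in $S^3\setminus L$ separating $p$ from $q$. So it suffices to prove: if there is no $2$-sphere in $S^3\setminus L$ separating $p$ from $q$, then the unrolled homology of $\CFa(\Sigma(L);\FF_2)$, with respect to the module structure of \S\ref{sec:ss-respect}, is nontrivial.

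Next I would set up notation exactly as in \S\ref{sec:ss-respect} and Corollary~\ref{cor:which-spheres}: choose a path $\gamma\subset S^3$ from $p$ to $q$ with interior disjoint from $L$, and let $\zeta\subset\Sigma(L)$ be its preimage, a simple closed curve whose homology class $[\zeta]\in H_1(\Sigma(L))$ is independent of $\gamma$ and which is precisely the class defining the $\FF_2[X]/(X^2)$-module (indeed $\Ainf$-module) structure on $\CFa(\Sigma(L);\FF_2)$ used throughout \S\ref{sec:ss-respect}. The heart of the argument is then to verify the hypothesis of Lemma~\ref{lem:HF-nontriv} for $Y=\Sigma(L)$ and this class, namely that $\zeta\cdot\wt{S}=0$ for every embedded $2$-sphere $\wt{S}\subset\Sigma(L)$. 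But this is exactly the contrapositive of Corollary~\ref{cor:which-spheres}: if some sphere $\wt{S}$ had $\zeta\cdot\wt{S}\neq 0$, Corollary~\ref{cor:which-spheres} would produce a $2$-sphere in $S^3\setminus L$ separating $p$ from $q$, contradicting the hypothesis. With this verified, Lemma~\ref{lem:HF-nontriv} (applied to $\Sigma(L)$ with the class $[\zeta]$) gives that the unrolled homology of $\CFa(\Sigma(L);\FF_2)$ with respect to $\zeta$ is nontrivial, and that $\zeta$ is the very one we equipped $\CFa(\Sigma(L);\FF_2)$ with, completing the proof.

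Since every ingredient is already proved, the only real care needed is bookkeeping: making sure the curve $\zeta$ appearing in \S\ref{sec:ss-respect}, in Corollary~\ref{cor:which-spheres}, and in Lemma~\ref{lem:HF-nontriv} is literally the same one (the lift of a $p$-to-$q$ arc), and that the topological hypothesis (non-splitness of $L$, or, in the general phrasing, that $p$ and $q$ cannot be separated by a $2$-sphere in $S^3\setminus L$) is precisely what lets us invoke Corollary~\ref{cor:which-spheres}. There is no analytic or homological-algebraic obstacle here; the genuine difficulty was front-loaded into Corollary~\ref{cor:which-spheres} (hence into Theorem~\ref{thm:AL-detect} on twisted Heegaard Floer homology and the twisted-coefficients/Hasse-derivative dictionary of \S\ref{sec:twist-to-Ainf}) and into Lemma~\ref{lem:HF-nontriv}.
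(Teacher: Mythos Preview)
Your proposal is correct and essentially follows the paper's own proof, which reads in its entirety: ``This is immediate from Lemma~\ref{lem:HF-nontriv} and Proposition~\ref{prop:sphere-split} (for the first statement) or Corollary~\ref{cor:which-spheres} (for the second statement).'' The only organizational difference is that the paper cites Proposition~\ref{prop:sphere-split} directly for the two-component case, whereas you absorb that case into the general one and invoke Corollary~\ref{cor:which-spheres} throughout; since Corollary~\ref{cor:which-spheres} is itself deduced from Proposition~\ref{prop:sphere-split}, this is a distinction without a difference.

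One small caution on your reduction step: you pass from the hypothesis ``no $2$-sphere separating $L_1$ and $L_2$'' to ``no $2$-sphere separating $p$ from $q$.'' These are literally equivalent only when $L_1$ and $L_2$ are connected (as in the two-component case), not for arbitrary sublinks. What your argument (and the paper's) actually establishes is the $p,q$ version, which is precisely what Corollary~\ref{cor:which-spheres} feeds into and what is used downstream in the proof of Theorem~\ref{thm:2-comp-precise}; so no harm is done, but you should be aware that the reformulation is not a purely formal equivalence in the general case.
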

Here, if $\Sigma(L)$ is a rational homology sphere then we view
$\CFa(\Sigma(L))$ as a module over $\FF_2[X]/(X^2)$ trivially, i.e.,
$\mu_{1+n}(y,a_1,\dots,a_n)=0$ if $n>1$ or if $n=1$ and $a_1=X$.
\begin{proof}
  This is immediate from Lemma~\ref{lem:HF-nontriv} and
  Proposition~\ref{prop:sphere-split} (for the first statement) or
  Corollary~\ref{cor:which-spheres} (for the second statement).
\end{proof}

\begin{proof}[Proof of Theorem~\ref{thm:2-comp-precise}]
  \ref{item:L-split}$\implies$\ref{item:Kh-free} This is
  Lemma~\ref{lem:split-is-free}.

  \ref{item:L-split}$\implies$\ref{item:2-quasi-free} By
  Theorem~\ref{thm:Kh-mod-well-defd}, we may assume the diagram for
  $L$ is itself split. Then, the reduced Khovanov complex is itself a
  complex of free modules.

  \ref{item:Kh-free}$\implies$\ref{item:unroll-acyclic} This follows
  by considering the horizontal filtration on $\unroll{\KhCx}$. The
  $E^1$-page of the associated spectral sequence is the unrolled
  complex for $\rKh(L;\FF_2)$, and the unrolled complex of a free
  module is acyclic.

  \ref{item:2-quasi-free}$\implies$\ref{item:unroll-acyclic} This is
  immediate from Lemma~\ref{lem:qf-unroll-acyclic}.

  \ref{item:unroll-acyclic}$\implies$\ref{item:L-split} 
  Suppose that $L$ is a link and $p,q\in L$ are such that $\unroll{\rKhCx(L;\FF_2)}$ is
  acyclic.  Let $C_*$ be the complex from
  Proposition~\ref{prop:ss-resp-X}. By Lemma~\ref{lem:X-hom-well-def-ungr},
  the unrolled homology of $C_*$ is isomorphic to the unrolled
  homology of $\CFa(\Sigma(m(L));\FF_2)$. Let $\Filt$ be the sum of the cube
  filtration on $C_*$ and the horizontal filtration on
  $\unroll{C_*}$. (That is, for an element $x\in \unroll{C_*}$,
  $\Filt(x)$ is the sum of the horizontal filtration of $x$ and the
  minimal cube filtration of any term in $x$.)

  Consider the spectral sequence associated to the filtration $\Filt$.
  Since $\mu_1$ strictly raises the cube filtration, the
  $d_0$-differential vanishes. The differential on the $E^1$-page is
  the sum of the first-order part of $\mu_1$ and the differential on
  the unrolled complex coming from the zeroth order part of
  $\mu_2(\cdot,X)$. By Properties~\ref{item:mu1-first-order}
  and~\ref{item:mu2-zeroth-order} in Proposition~\ref{prop:ss-resp-X},
  this is exactly the differential on $\unroll{\rKhCx(m(L))}$. Hence,
  the $E^1$-page is acyclic. Since the complex $\unroll{C_*}$ is
  complete in the filtration $\Filt$, this implies that
  $\unroll{C_*}\simeq \unroll{\CFa(\Sigma(m(L));\FF_2)}$ is acyclic. Hence, by
  Corollary~\ref{cor:HF-nontriv}, there is a $2$-sphere in
  $S^3\setminus L$ separating $p$ an $q$.
\end{proof}

\begin{proof}[Proof of Theorem~\ref{thm:main-v1}]
  This is immediate from the equivalence of parts~\ref{item:L-split}
  and~\ref{item:Kh-free} in Theorem~\ref{thm:2-comp-precise}.
\end{proof}

\begin{proof}[Proof of Corollary~\ref{cor:unreduced}]
  Suppose there is a sphere in $S^3\setminus L$ separating $p$ and $q$. By
  Theorem~\ref{thm:Kh-mod-well-defd}, we may assume that $\Kh(L)$ is computed
  from a split diagram $L_p\amalg L_q$ with $p\in L_p$ and $q\in L_q$. Then,
  $\Kh(L)\cong \Kh(L_p)\otimes_{\FF_2}\Kh(L_q)$ as a module over
  $\FF_2[W,X]/(W^2,X^2)$. As in the proof of Lemma~\ref{lem:split-is-free},
  $\Kh(L_p)$ is a free module over $\FF_2[W]/(W^2)$ and $\Kh(L_q)$ is a free
  module over $\FF_2[X]/(X^2)$. So, $\Kh(L)$ is a free module over
  $\FF_2[W,X]/(W^2,X^2)$, as desired.

  Conversely, suppose that $\Kh(L)$ is a free module over
  $\FF_2[W,X]/(W^2,X^2)$. We claim that $\rKh(L)$ is a free module over
  $\FF_2[X]/(X^2)$. If we knew that all higher $\Ainf$ operations on $\Kh(L)$
  vanished then this would be immediate, since the reduced Khovanov homology is
  the $\Ainf$ tensor product of the $(\FF_2[W]/(W^2),\FF_2[X]/(X^2))$-bimodule
  $\Kh(L)$ over $\FF_2[W]/(W^2)$ with $\FF_2$ (Lemma~\ref{lem:reduce-Kh}). In
  fact, the result follows from homological algebra nonetheless. The $\Ainf$
  tensor product is
  \[
    \begin{tikzpicture}
      \node at (0,0) (zero) {$0$};
      \node at (2,0) (Kh1) {$\Kh(L)$};
      \node at (4,0) (Kh2) {$\Kh(L)$};
      \node at (6,0) (Kh3) {$\Kh(L)$};
      \node at (8,0) (Kh4) {$\Kh(L)$};
      \node at (10,0) (cdots) {$\cdots$};
      \draw[->] (Kh1) to (zero);
      \draw[->] (Kh2) to (Kh1);
      \draw[->] (Kh3) to (Kh2);
      \draw[->] (Kh4) to (Kh3);
      \draw[->] (cdots) to (Kh4);
      \draw[->, bend right=20] (Kh3) to (Kh1);
      \draw[->, bend right=20] (Kh4) to (Kh2);
      \draw[->, bend right=20] (cdots) to (Kh3);
      \draw[->, bend right=25] (Kh4) to (Kh1);
      \draw[->, bend right=25] (cdots) to (Kh2);
      \draw[->, bend right=30] (cdots) to (Kh1);
    \end{tikzpicture}
  \]
  where an arrow of length $n$ comes from the operation
  $m_{1+n}(\cdot,W,\dots,W)$. (More generally, an $\Ainf$-bimodule operation
  $\mu_{k,1,\ell}$ contributes an $\Ainf$-module operation
  $\mu_{1+\ell}$ which goes $k$ steps to the left.)

  Consider the spectral sequence
  associated to the obvious horizontal filtration. (This is a
  formulation of the universal coefficient spectral sequence.) Since
  $\Kh(L)$ is finitely generated and the $d^i$ differential changes
  the homological grading by $i-1$, the spectral sequence
  converges. The $E^2$-page is
  \[
    0\longleftarrow Kh(L)/\bigl(W\cdot \Kh(L)\bigr)\longleftarrow
    0\longleftarrow 0\longleftarrow 0\longleftarrow\cdots
  \]
  so the spectral sequence collapses. Thus, as an (ordinary) module,
  the $E^\infty$-page is $Kh(L)/\bigl(W\cdot \Kh(L)\bigr)$.  Further,
  the form of the $E^\infty$-page implies that the module structure on
  the $E^\infty$-page is the same as the module structure on the
  homology of the total complex.
  So, the reduced Khovanov homology is
  isomorphic to $\Kh(L)/ \bigl(W\cdot\Kh(L)\bigr)$, a free module over $\FF_2[X]/(X^2)$.
  Hence, by Theorem~\ref{thm:main-v1}, there is a sphere in $S^3\setminus L$
  separating $p$ and $q$.
\end{proof}

Finally, we note that Hedden-Ni's result that the Khovanov homology
module detects the unlink follows from Theorem~\ref{thm:main-v1}
(and~\cite{KM-kh-detects}). (Of course, the techniques we used to
prove Theorem~\ref{thm:main-v1} are similar to the ones they used.)
\begin{citethm}\cite[Theorem 2]{HN-kh-detects}
  Let $L$ be an $n$-component link and $U$ the $n$-component
  unlink. If
  $\Kh(L)\cong \Kh(U)=\FF_2[X_1,\dots,X_n]/(X_1^2,\dots,X_n^2)$, as
  modules over the ring $\FF_2[X_1,\dots,X_n]/(X_1^2,\dots,X_n^2)$, then
  $L\sim U$.
\end{citethm}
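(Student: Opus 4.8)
The plan is to use the module structure to force $L$ to be a totally split union of knots, and then to finish with a dimension count together with Kronheimer--Mrowka's unknot detection theorem~\cite{KM-kh-detects}.

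First I would feed the hypothesis into Corollary~\ref{cor:unreduced}, one pair of components at a time. The module structure on $\Kh(L)$ is via basepoints $p_1,\dots,p_n$, where $X_i$ acts through a basepoint $p_i$ on the $i$th component. For $i\neq j$, restrict scalars to the subring $\FF_2[X_i,X_j]/(X_i^2,X_j^2)$: the module $\Kh(U)=\FF_2[X_1,\dots,X_n]/(X_1^2,\dots,X_n^2)$ is free over this subring, hence so is $\Kh(L)\cong\Kh(U)$. By Corollary~\ref{cor:unreduced} (applied with $W=X_i$ and $X=X_j$) there is a $2$-sphere in $S^3\setminus L$ separating $p_i$ from $p_j$, and hence separating the $i$th component of $L$ from the $j$th.

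Next I would conclude that $L$ is totally split. Write $L=L^{(1)}\amalg\cdots\amalg L^{(k)}$ as the (standard, essentially unique) decomposition of $L$ into a split union of non-split sublinks. If some $L^{(a)}$ contained two components of $L$, say the $i$th and the $j$th, then the sphere produced above would lie in $S^3\setminus L^{(a)}$ and would exhibit $L^{(a)}$ as split, a contradiction; so each $L^{(a)}$ is a knot and $L=K_1\amalg\cdots\amalg K_n$ is a split union of $n$ knots. Then, over $\FF_2$, $\Kh(L)\cong\Kh(K_1)\otimes_{\FF_2}\cdots\otimes_{\FF_2}\Kh(K_n)$, so $2^n=\dim_{\FF_2}\Kh(U)=\dim_{\FF_2}\Kh(L)=\prod_{i=1}^n\dim_{\FF_2}\Kh(K_i)$. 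Since $\dim_{\FF_2}\Kh(K;\FF_2)=2\dim_{\FF_2}\rKh(K;\FF_2)\geq 2$ for every knot $K$ (reduced Khovanov homology over $\FF_2$ being nonzero), each factor equals $2$, so $\dim_{\FF_2}\rKh(K_i;\FF_2)=1$ for every $i$. By Kronheimer--Mrowka~\cite{KM-kh-detects} each $K_i$ is the unknot, and therefore $L\sim U$.

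The real work is all upstream, in Corollary~\ref{cor:unreduced} and hence in Theorem~\ref{thm:2-comp-precise}; within the present argument the only step needing a little care is the passage from ``every pair of components is separated by a $2$-sphere'' to ``$L$ is totally split'', which rests on the existence of the splitting of $L$ into non-split pieces.
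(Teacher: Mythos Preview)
Your proof is correct and follows essentially the same route as the paper: apply Corollary~\ref{cor:unreduced} to each pair of components to see that $L$ is a split union of knots, then use the K\"unneth theorem and Kronheimer--Mrowka to identify each piece as an unknot. You are somewhat more explicit than the paper about the restriction-of-scalars step and the passage from ``every pair of components is separated'' to ``$L$ is totally split'', but the argument is the same.
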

\begin{proof}
  By Corollary~\ref{cor:unreduced}, there is a sphere in $S^3\setminus L$
  separating each pair of components of $L$. It follows that $L$ is a disjoint
  union of $n$ knots. By the K\"unneth theorem for Khovanov homology, each of
  these knots has Khovanov homology $\FF_2[X]/(X^2)$. So, by Kronheimer-Mrowka's
  theorem~\cite{KM-kh-detects} that Khovanov homology detects the unknot, each
  component is an unknot, and so $L$ is an $n$-component unlink.
\end{proof}

\subsection{Detection of split links by Heegaard Floer homology}
\begin{proof}[Proof of Theorem~\ref{thm:HF-detect-split}]
  We will prove the properties stated in the theorem are equivalent to
  the following two additional properties, as well:
  \begin{enumerate}[label=(\ref{item:HF-free}\alph*),ref=\ref{item:HF-free}\alph*,leftmargin=*]
  \item\label{item:HF-unroll-acyclic} $\unroll{\HFa(\Sigma(L);\FF_2)}$
    is acyclic, where $\HFa(\Sigma(L);\FF_2)$ is viewed as an ordinary
    module over the ring $\FF_2[X]/(X^2)$.
  \item\label{item:HF-Ainf-unroll-acyclic}
    $\unroll{\HFa(\Sigma(L);\FF_2)}$ is acyclic, where
    $\HFa(\Sigma(L);\FF_2)$ is viewed as an $\Ainf$-module over the ring
    $\FF_2[X]/(X^2)$.
  \end{enumerate}
  The logic of the proof is:
  \[
    \begin{tikzpicture}
      \node at (0,0) (L-split) {(\ref{item:HF-L-split})};
      \node at (1.5,0) (HF-free) {(\ref{item:HF-free})};
      \node at (3,0) (HF-unroll-acyclic) {(\ref{item:HF-unroll-acyclic})};
      \node at (4.5,0) (HF-Ainf-unroll-acyclic) {(\ref{item:HF-Ainf-unroll-acyclic})};
      \node at (6,1) (HF-2-quasi-free) {(\ref{item:HF-2-quasi-free})};
      \node at (7.5,0) (HF-unroll-cx-acyclic) {(\ref{item:HF-unroll-cx-acyclic})};
      \draw[->, double] (L-split) to (HF-free);
      \draw[->, double] (HF-free) to (HF-unroll-acyclic);
      \draw[->, double] (HF-unroll-acyclic) to (HF-Ainf-unroll-acyclic);
      \draw[->, double] (HF-Ainf-unroll-acyclic) to (HF-unroll-cx-acyclic);
      \draw[->, double, bend left=30] (HF-unroll-cx-acyclic) to (L-split);
      \draw[->, double, bend left=15] (L-split) to (HF-2-quasi-free);
      \draw[->, double, bend left=10] (HF-2-quasi-free) to (HF-unroll-cx-acyclic);
    \end{tikzpicture}
  \]
  
  (\ref{item:HF-L-split})$\implies$(\ref{item:HF-free}) By
  Proposition~\ref{prop:sphere-split}, there is a decomposition
  $\Sigma(L)\cong \Sigma(L_1)\#\Sigma(L_2)\#(S^2\times S^1)$, where
  the loop $\zeta$ induced by $p$ and $q$ intersects the $2$-sphere
  $S^2\times\{\pt\}\subset S^2\times S^1$ algebraically once. So, the
  result follows from the K\"unneth theorem for $\HFa$ and a model
  computation of the $H_1/\tors$ action on
  $\HFa(S^2\times S^1)$~\cite{OSz-hf-3manifolds}.

  (\ref{item:HF-free})$\implies$(\ref{item:HF-unroll-acyclic}) This is
  immediate from the definition of the unrolled complex.

  (\ref{item:HF-unroll-acyclic})$\implies$(\ref{item:HF-Ainf-unroll-acyclic}) This
  follows from the spectral sequence associated to the horizontal
  filtration on $\unroll{\CFa(\Sigma(K))}$: the $E^1$-page is the
  unrolled complex for $\HFa(\Sigma(K))$ (viewed as an honest module
  over $\FF_2[X]/(X^2)$), which is acyclic by assumption.

  (\ref{item:HF-Ainf-unroll-acyclic})$\implies$(\ref{item:HF-unroll-cx-acyclic})
  This follows from invariance of the unrolled homology under $\Ainf$
  quasi-isomorphism (Lemma~\ref{lem:X-hom-well-def-ungr}) and the fact
  that any $\Ainf$-module is quasi-isomorphic to its homology.

  (\ref{item:HF-L-split})$\implies$(\ref{item:HF-2-quasi-free}) By
  Corollary~\ref{cor:which-spheres}, we can factor
  $\Sigma(L)=(S^2\times S^1)\# Y'$ where the loop $\zeta$
  induced by $p$ and $q$ is the circle
  $\{\pt\}\times S^1$ in the $S^2\times S^1$. Choose a Heegaard
  diagram $\HD=\HD_1\#\HD_2$ which witnesses this splitting, where
  $\HD_1$ is the standard Heegaard diagram for $S^2\times S^1$ and the
  connected sum happens in the region containing the basepoint
  $z$. Then, the chain complex $\CFa(\HD)$ is a free module over
  $\FF_2[X]/(X^2)$ (with trivial higher $\Ainf$-operations).

  (\ref{item:HF-2-quasi-free})$\implies$(\ref{item:HF-unroll-cx-acyclic})
  This is immediate from Lemma~\ref{lem:qf-unroll-acyclic}.
  
  (\ref{item:HF-unroll-cx-acyclic})$\implies$(\ref{item:HF-L-split})
  This is Corollary~\ref{cor:HF-nontriv}.
\end{proof}

\providecommand{\bysame}{\leavevmode\hbox to3em{\hrulefill}\thinspace}
\providecommand{\MR}{\relax\ifhmode\unskip\space\fi MR }
\providecommand{\MRhref}[2]{%
  \href{http://www.ams.org/mathscinet-getitem?mr=#1}{#2}
}
\providecommand{\href}[2]{#2}


\end{document}